\theoremstyle{plain}
\newtheorem{mainthm}{Theorem}
\newtheorem{mainclly}[mainthm]{Corollary}
\newtheorem*{conj*}{Conjecture}
\newtheorem*{cor*}{Corollary}
\newtheorem{theorem}{Theorem}[section]
\newtheorem{thm}[theorem]{Theorem}
\newtheorem{corollary}[theorem]{Corollary}
\newtheorem{lemma}[theorem]{Lemma}
\newtheorem{conj}{Conjecture}
\newtheorem{claim}{Claim}
\theoremstyle{definition}
\newtheorem*{def*}{Definition}
\newtheorem{remark}[theorem]{Remark}
\newtheorem{example}[theorem]{Example}
\newtheorem{definition}[theorem]{Definition}
\newcommand{\SA}{{\mathcal A}}
\newcommand{\SE}{{\mathcal E}}
\newcommand{\SF}{{\mathcal F}}
\newcommand{\SK}{{\mathcal K}}
\newcommand{\SL}{{\mathcal L}}
\newcommand{\SM}{{\mathcal M}}
\newcommand{\SN}{{\mathcal N}}
\newcommand{\SR}{{\mathcal R}}
\newcommand{\SU}{{\mathcal U}}
\newcommand{\SV}{{\mathcal V}}
\newcommand{\SW}{{\mathcal W}}
\newcommand{\SX}{{\mathcal X}}
    \newcommand{\Ga}{\Gamma}
\newcommand{\de} {\delta}
\renewcommand{\epsilon}{\varepsilon}
\newcommand{\ka} {\kappa}
\newcommand{\si} {\sigma}       \newcommand{\Si}{\Sigma}
\newcommand{\Z}{\mathbb{Z}}
\newcommand{\N}{\mathbb{N}}
\newcommand{\R}{\mathbb{R}}
\newcommand{\eps}{\varepsilon}
\newcommand{\su}{\operatorname{Supp}}
\newcommand{\tpitchfork}{
  \vbox{
    \baselineskip\z@skip
    \lineskip-.52ex
    \lineskiplimit\maxdimen
    \m@th
    \ialign{##\crcr\hidewidth\smash{$-$}\hidewidth\crcr$\pitchfork$\crcr}
  }
}
\DeclareMathOperator{\Sing}{Sing}
\DeclareMathOperator{\supp}{supp}
\DeclareMathOperator{\Ind}{Ind}
\DeclareMathOperator{\Diff}{Diff}
\numberwithin{equation}{section}
\title{Entropy Flexibility of Dynamical Systems}
\author{Alexander Arbieto}
\address[A. Arbieto]{
	Universidade Federal do Rio de Janeiro, Instituto de Matem\'atica,
	Av. Athos da Silveira Ramos, 149,
	21941-909, Rio de Janeiro,
}
\email{arbieto@im.ufrj.br}
\author{Piotr Oprocha}
\address[P. Oprocha]{
	AGH University of Krakow, Faculty of Applied Mathematics,
	al. Mickiewicza 30,
	30-059 Krak\'ow,
	Poland -- and -- National Supercomputing Centre IT4Innovations, University of Ostrava,
	IRAFM,
	30. dubna 22, 70103 Ostrava,
	Czech Republic}
\email{piotr.oprocha@osu.cz}
\author{Elias Rego}
\address[E. Rego]{
	AGH University of Krakow, Faculty of Applied Mathematics,
	al. Mickiewicza 30,
	30-059 Krak\'ow,
}
\email{rego@agh.edu.pl}
\subjclass[2020]{Primary: 37B40, 37C10, Secondary: 37B05 , 37B10.}
\keywords{Entropy, Shadowing, Suspension Flows, Shift of Finite type, Star flows, Asymptotically Sectional-Hyperbolic, Partial Hyperbolicity}
\begin{document}

    \begin{abstract}
 Inspired by Katok's intermediate entropy property [Inst. Hautes \'Etudes Sci. Publ. Math. 51 (1980), 137–173], 
 we introduce and study the notion of entropy flexibility for discrete-time and continuous-time dynamical systems. By using renewal systems techniques, we show that this property is present in several classes of systems where any intermediate value of entropy can be attained on a strictly ergodic sub-system. In addition, we prove an  entropy flexibility analogue of Katok’s conjecture: Entropy flexibility is a typical property for vector fields on 3-manifolds and surface diffeomorphisms. 
\end{abstract}

\maketitle

\section{Introduction}

In order to quantify the complexity of a dynamical system, a fundamental quantity is often considered: the \emph{entropy} of the system. If this number is positive, it indicates that the number of orbits with distinguishable behavior grows exponentially fast with time, leading to disorder in dynamical behavior, a phenomenon
referred to as \emph{chaos} by several authors. Entropy can be studied from either a topological or a measure-theoretic perspective. Let us denote by $h_{top}(f)$ the \emph{topological entropy} of a map $f$, and by $h_{\mu}(f)$ the \emph{measure-theoretic entropy} of $f$ with respect to an invariant measure $\mu$. These notions are connected through the \emph{variational principle}:
$$
h_{top}(f) = \sup\{h_{\mu}(f) \; ; \; \text{$\mu$ is an invariant measure for $f$}\}.
$$
Moreover, entropy can also be investigated from a local perspective. The topological entropy is concentrated on the nonwandering set, and if this set decomposes into invariant pieces, then the topological entropy of $f$ equals the supremum of the entropy over these pieces.

However, entropy can also arise in a more homogeneous way. There exist examples of minimal and uniquely ergodic systems with positive entropy, meaning that in a sense, the complexity is uniformly distributed throughout the space; for instance, see \cite{Gr} for early constructions of this type; cf. celebrated Jewett-Krieger theorem. In such systems, there are no proper subsystems carrying smaller entropy.  One could then ask when entropy can change in a ``continuous'' fashion in a given system.
This type of dynamical behavior was first observed by A. Katok in the measure-theoretical setting in \cite{Katok}, where he introduced the following definition.

\begin{definition}\label{def: IEP}
We say that $f$ has the \emph{intermediate entropy property} if for every number $0\leq c<h_{top}(f)$ there exists an ergodic measure $\mu_c$ of $f$ such that $h_{\mu_c}(f)=c$.
\end{definition}

In \cite{Katok}, this property was proved for $C^{1+\alpha}$-diffeomorphisms on surfaces, and the following conjecture was proposed.

\begin{conj*}[Katok]
Every $C^2$-diffeomorphism on a compact Riemannian manifold with dimension greater than one has the intermediate entropy property.
\end{conj*}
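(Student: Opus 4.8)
The plan is to reduce the conjecture to a \emph{horseshoe-production} problem together with a \emph{soft realization} problem. For the realization half: a hyperbolic basic set $\Lambda$ of $f$ (which we may take to be a topologically mixing horseshoe) is, through a Markov partition, finite-to-one semiconjugate to a mixing shift of finite type, and for such shifts the metric-entropy function $\nu\mapsto h_\nu(f)$ maps the ergodic measures \emph{onto} $[0,h_{top}(f|_\Lambda)]$ --- indeed, onto each target value one can pin a minimal uniquely ergodic subshift, for instance via the renewal-system constructions underlying the entropy-flexibility results of this paper, or via classical strictly ergodic examples of prescribed entropy. Hence, given $0\le c<h_{top}(f)$, as soon as we have a hyperbolic basic set $\Lambda$ with $h_{top}(f|_\Lambda)>c$ we obtain an ergodic (even strictly ergodic) measure $\mu_c$ supported in $\Lambda$ with $h_{\mu_c}(f)=c$, which is the intermediate entropy property at level $c$.

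Thus everything reduces to producing hyperbolic basic sets of entropy arbitrarily close to $h_{top}(f)$. By the variational principle pick, for $\epsilon>0$, an ergodic $\mu=\mu_\epsilon$ with $h_\mu(f)>h_{top}(f)-\epsilon$. If $\mu$ is \emph{hyperbolic} (no zero Lyapunov exponents), Katok's horseshoe theorem --- valid in the $C^{1+\alpha}$ category, in particular for $C^2$ diffeomorphisms on a manifold of any dimension $\ge 2$ --- yields a horseshoe $\Lambda_\epsilon$ with $h_{top}(f|_{\Lambda_\epsilon})>h_\mu(f)-\epsilon>h_{top}(f)-2\epsilon$; choosing $\epsilon$ so that $h_{top}(f)-2\epsilon>c$ finishes the argument. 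On surfaces this already proves the conjecture: by Ruelle's inequality every ergodic $\mu$ with $h_\mu(f)>0$ has one positive and one negative exponent, hence is hyperbolic, so $h_{top}(f)=\sup\{\,h_\mu(f):\mu\ \text{hyperbolic ergodic}\,\}$ and the above applies; this recovers Katok's original theorem.

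The real difficulty lives in dimension $\ge 3$, where positive-entropy ergodic measures may carry zero Lyapunov exponents, and a priori $\beta:=\sup\{\,h_\mu(f):\mu\ \text{hyperbolic ergodic}\,\}<h_{top}(f)$, so that the top of the entropy spectrum is produced only by non-hyperbolic measures and no horseshoe comes near it. The plan here is to prove a \emph{partially hyperbolic Katok horseshoe theorem}: starting from a non-hyperbolic ergodic $\mu$ with $h_\mu(f)>\beta$, write its Oseledets bundle as $E^s\oplus E^c\oplus E^u$ with $E^c$ carrying exactly the vanishing exponents and (by Ruelle's inequality, sharpened through Ledrappier--Young) the entropy governed entirely by the expansion along $E^u$; then for every $\epsilon>0$ construct a compact $f$-invariant locally maximal set $\Lambda$, equipped with a Markov partition adapted to $E^u$, on which $E^s$ is uniformly contracted, $E^u$ uniformly expanded, $E^c$ merely dominated, and $h_{top}(f|_\Lambda)>h_\mu(f)-\epsilon$. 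Inside such a ``partially hyperbolic horseshoe'' the center directions contribute no entropy and the system is a skew product over a mixing shift of finite type, so the intermediate entropy property can again be read off from the base while the center fibers stay neutral.

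The step I expect to be the main obstacle is precisely this partially hyperbolic horseshoe construction: gaining \emph{uniform} control of the center bundle along the approximating orbits (which are only Pesin-regular and neutrally behaved in the center, not uniformly so) and extracting from them a genuine locally maximal invariant set admitting a Markov partition is exactly where the higher-dimensional case has so far resisted every attempt; a complete implementation of this lemma would constitute a proof of Katok's conjecture.
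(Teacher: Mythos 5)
The statement you are asked about is not a theorem of this paper at all: it is Katok's conjecture, which the paper records as an open problem (proved only in special cases such as the $C^{1+\alpha}$ surface setting and the scenarios cited around it), so there is no proof in the paper to match your argument against. Your proposal, read honestly, is not a proof either, and you say so yourself: the entire higher-dimensional case is funneled into a ``partially hyperbolic Katok horseshoe theorem'' which is precisely the step nobody knows how to do, so what you have is a reduction scheme plus a correct recollection of the surface case (Ruelle's inequality for $f$ and $f^{-1}$ forces hyperbolicity of positive-entropy ergodic measures, and Katok's $C^{1+\alpha}$ horseshoe theorem then realizes intermediate entropies inside horseshoes, e.g.\ via strictly ergodic subshifts of a mixing SFT). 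That realization half is fine and is consonant with the tools of this paper (Corollary~\ref{cor: SFT} gives entropy flexibility of SFTs, and bounded-to-one Markov codings preserve entropy), but it was never the obstruction.

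The genuine gap is the missing lemma, and it is worth saying why the sketch of it is not just ``hard'' but structurally incomplete. A non-hyperbolic ergodic measure gives you zero Lyapunov exponents on $E^c$, but zero exponents give neither domination of $E^c$ by $E^u$ nor any uniform neutrality along orbits: Oseledets/Pesin regularity is only asymptotic and non-uniform, so there is no reason the closure of the approximating orbit segments carries a continuous invariant splitting, let alone a locally maximal set with uniformly contracted $E^s$, uniformly expanded $E^u$, and dominated $E^c$. Moreover, even granting such a ``partially hyperbolic horseshoe,'' the claim that the center contributes no entropy and that the set fibers as a skew product over a mixing SFT is unjustified; controlling fiber entropy requires extra hypotheses (e.g.\ one-dimensional or uniformly compact center, entropy-expansiveness), exactly the kind of assumptions under which the paper's Section on partially hyperbolic diffeomorphisms operates. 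So the proposal correctly isolates where the conjecture lives, but supplies no mechanism for crossing that point; as it stands it proves the conjecture only in the cases where it was already known.
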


The conjecture has been explored by many authors and proved to be true in several scenarios, see, for instance, the works \cites{GSW,Sun1,Sun2,HXX}. However, one can also pose the analogous question from the topological perspective. Motivated by this, we introduce the following definition, which asks for intermediate entropy to be realized simultaneously in both the topological and the measure-theoretical settings.

\begin{definition}\label{def: EF}
A map $f:M\to M$  has \emph{entropy flexibility} if for every $0\leq c< h_{top}(f)$ there is an  ergodic measure $\mu_c$ and a compact and invariant subset $\Lambda_c\subset M$ such that $$h_{top}(f|_{\Lambda_c})=h_{\mu_c}(f)=c.$$
\end{definition}

\begin{remark}
    In \cite{WXZ}, $f$ is called a lowerable system if any intermediate value of topological entropy can be realized by a compact  (not necessarily invariant) subset of $K_c\subset M$. The main result of that paper states that any homeomorphism with finite entropy is lowerable.  Although definition of lowerable system seems similar to Definition~
    \ref{def: EF}, it is much less demanding. When proving that a system is lowerable, the compact set $K_c$ is not required to be invariant,  and in many cases cannot be, in particular in minimal systems. On the other hand, entropy flexibility requires value of topological entropy to be realized by a subsystem of $f$, in particular, minimal system is never entropy flexible.     
\end{remark}

The simplest example of a system that exhibits entropy flexibility is the full-shift on two or more symbols. This can be easily seen; an exemplary proof can be found in the monograph by  Walters~\cite{Walters}. However, the simple argument relies heavily on the fact that concatenations of all words are allowed, a property absent in several subshifts.

Obtaining entropy flexibility directly from the intermediate entropy property is not immediate. Indeed, if we get the measure $\mu_c$, it is not obvious to get the set $\Lambda_c$ directly from $\mu_c$. In fact, when considering $K=\supp(\mu_c)$ the entropy of $f|_{K}$ can be way bigger than $h_{\mu_c}(f)$, as illustrated in the following example:

\begin{example}\label{counter-examples}
In \cite{Her}, the construction of a minimal $C^\infty$-diffeomorphism with positive entropy is presented. By \cite{SunPhd}*{Theorem 3.3.5}, this example has the intermediate entropy property. On the other hand, it cannot have entropy flexibility due to its minimality. 
\end{example}

One can easily formulate the Definitions \ref{def: IEP} and \ref{def: EF} for flows by an obvious adjustment of the terminology. Observe that, by considering the suspension flow of the diffeomorphism in Example \ref{counter-examples} with constant roof function, we can obtain an example of flows with the intermediate entropy property, but without entropy flexibility. By the previous examples, an entropy flexibility analogue of Katok's conjecture is not true for both discrete-time and continuous-time systems. 
Nevertheless, one could still ask whether the set of systems satisfying entropy flexibility is large or negligible. We recall that a property $(P)$  is $C^1$-typical or holds $C^1$-generically if it holds for a residual\footnote{a set containing an intersection of at most countably many open and dense sets}  set of systems.

\begin{conj}\label{conj} Entropy flexibility holds $C^1$-generically for:
\begin{enumerate}
    \item Diffeomorphisms on a compact Riemannian manifold with dimension greater than one.
    \item Flows on a compact Riemannian manifold with dimension greater than two. 
   \end{enumerate}
\end{conj}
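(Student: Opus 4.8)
\emph{Strategy.} The plan is to reduce entropy flexibility to one symbolic statement and then feed it into the structure theory of $C^1$-generic systems. Since Definition~\ref{def: EF} is vacuous when $h_{top}=0$, only positive-entropy systems matter, and for those the idea is that the entropy should be ``carried by horseshoes'', on which one can run symbolic combinatorics.

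\emph{Symbolic core via renewal systems.} First I would prove: if $X$ is a mixing subshift of finite type with $h_{top}(\sigma|_X)=h>0$, then for every $0\le c<h$ there is a strictly ergodic subshift $S\subset X$ with $h_{top}(\sigma|_S)=c$; its unique measure $\mu$ then also satisfies $h_\mu(\sigma)=c$ by the variational principle, so $\Lambda_c=S$, $\mu_c=\mu$ works. Fix $\delta>0$ with $c+\delta<h$ and $L$ large. Using mixing, choose a family $\mathcal W$ of distinct, freely concatenable $X$-admissible words of length $L$ with $e^{cL}<|\mathcal W|\le e^{(c+\delta)L}$. The renewal system $\mathsf X(\mathcal W)\subset X$ it generates is the union of the $L$ shifts of $\pi\bigl(\{1,\dots,|\mathcal W|\}^{\mathbb Z}\bigr)$, where the coding map $\pi$ is injective, has closed $\sigma^L$-invariant image, and intertwines the full shift $\sigma_{|\mathcal W|}$ with $\sigma^L$. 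Inside the full shift on $|\mathcal W|$ letters, a Grillenberger-type construction provides a strictly ergodic subshift $T$ of entropy $cL<\log|\mathcal W|$; transporting $T$ through $\pi$ and taking the union of its $L$ shifts yields the required $S\subset X$ with $h_{top}(\sigma|_S)=\tfrac1L h_{top}(\sigma^L|_S)=c$, still minimal and uniquely ergodic. For a general SFT one applies this inside an irreducible component of maximal entropy; for $c=0$ a periodic orbit suffices. Note that renewal systems alone only produce a \emph{dense} set of entropy values, whereas we need \emph{every} $c<h$ realized by a simultaneously minimal and uniquely ergodic $S$ — which is what the Grillenberger step buys, and is also why the strict inequality $c<h$ is used (at $c=h$ strict ergodicity fails in general).

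\emph{Transfer to diffeomorphisms and flows.} A hyperbolic basic set of a diffeomorphism $f$ is topologically conjugate to an SFT, so a horseshoe $\Lambda$ with $h_{top}(f|_\Lambda)>c$ already contains a strictly ergodic subsystem of entropy exactly $c$; hence $f$ is entropy flexible as soon as $h_{top}(f)=\sup_\Lambda h_{top}(f|_\Lambda)$ over horseshoes. For flows I would establish the analogue for suspension flows over SFTs: a strictly ergodic subshift $Z$ with measure $\nu$ and roof $\rho$ suspends to a strictly ergodic flow of entropy $h_\nu(\sigma)/\int\rho\,d\nu$ by Abramov's formula, and combining the symbolic construction (to move $h_\nu$ through $[0,h_{top}(X))$) with control of $\int\rho\,d\nu$ and an intermediate-value argument, every value below the entropy of the suspension is attained. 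Singular-hyperbolic and sectional-hyperbolic pieces on $3$-manifolds carry Bowen--Ruelle-type symbolic codings (with roof blowing up near singularities), and their topological entropy is still the supremum of entropies of \emph{non-singular} horseshoes, so the same conclusion applies — provided one checks that the unbounded roof does not interfere, which is the delicate point in this step.

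\emph{Genericity, and the main obstacle.} For surfaces I would invoke the $C^1$-generic dichotomy (Axiom~A, or a homoclinic tangency) together with the known approximation of topological entropy by hyperbolic horseshoes for $C^1$-generic surface diffeomorphisms, so that in either branch $h_{top}(f)=\sup_\Lambda h_{top}(f|_\Lambda)$ and the symbolic core concludes; for $C^1$-generic vector fields on $3$-manifolds I would use the corresponding structure theory (the star/singular-hyperbolic results of Gan--Wen--Yang and Arroyo--Rodriguez Hertz, together with closing-lemma arguments) to reduce to ``$h_{top}$ equals the supremum of entropies of singular or regular horseshoes'', and then apply the suspension-flow version; the residual set is the intersection of the countably many genericity hypotheses used. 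The genuinely hard part is this last reduction in the non-hyperbolic branch: Katok's horseshoe theorem is unavailable below $C^{1+\alpha}$, so one must lean on $C^1$-generic substitutes — the ergodic closing lemma, the theory of dominated splittings on surfaces in the spirit of Pujals--Sambarino, and its flow analogue — and on the flow side one must control the singularities, where ``singular horseshoes'' replace ordinary ones. For these reasons I expect the full Conjecture to be reachable at present only in dimension two and for flows on $3$-manifolds, where $C^1$-generic entropy is adequately understood; the higher-dimensional cases would require substantial new input.
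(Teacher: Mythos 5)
First, a scoping remark: the statement you were asked to prove is posed in the paper as a \emph{conjecture}, and the paper itself only establishes the two low-dimensional cases (flows on $3$-manifolds and surface diffeomorphisms, Theorem~\ref{thm: generic}); your closing assessment that only these cases are currently reachable agrees exactly with what the paper does. For those cases your architecture --- a symbolic core inside horseshoes, suspension flows via Abramov's formula, and a $C^1$-generic approximation of topological entropy by hyperbolic horseshoes --- matches the paper's. Your symbolic core is genuinely different in one respect: you propose a constant-length renewal family plus a Grillenberger-type strictly ergodic subshift of prescribed entropy inside the resulting full shift, whereas the paper builds the strictly ergodic subsystem by hand as a nested intersection $X=\bigcap_n X_n$ of shifts of finite type (Theorem~\ref{thm: reduction2}), using shadowing, words without large self-overlaps, and unique decipherability of renewal systems. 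For a \emph{constant} roof your route is plausible and arguably shorter.

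There are, however, two genuine gaps, located precisely where the paper works hardest. (a) For a non-constant roof $\rho$ you need a strictly ergodic $\nu$ with $h_\nu(\sigma)=c\int\rho\,d\nu$, where both sides depend on the unknown $\nu$; your ``intermediate-value argument'' is not a proof, since there is no continuous one-parameter family of strictly ergodic subshifts along which $h_\nu-c\int\rho\,d\nu$ visibly changes sign. The paper resolves this by recalibrating the target entropy at every stage of the nested construction, forcing $h_{top}(X_{n+1})$ to lie between $(1+\delta_{n+1})^2c\int\rho\,d\nu_n$ and $(1+3\delta_{n+1})c\int\rho\,d\nu_n$ while pinning $\int\rho\,d\eta$ within a factor $(1\pm\delta_{n+1})$ of $\int\rho\,d\nu_n$ for every $\eta$ supported on $X_{n+1}$, so that the two quantities converge together; this coupling is the technical heart of Theorem~\ref{thm: SFT} and cannot be outsourced to a continuity argument. (b) In the genericity step, the $C^1$ horseshoe-creation results you lean on are \emph{perturbative}: when the Oseledets splitting is not dominated they produce horseshoes only for vector fields (or diffeomorphisms) \emph{near} the given one. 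The paper bridges this with a lower semicontinuous set-valued map recording hyperbolic horseshoes together with their measures and entropies, Pugh's theorem that such a map is continuous on a residual set, and a disjointification argument assembling the local residual sets into a global one; your sketch asserts the reduction ``$h_{top}$ equals the supremum of entropies of horseshoes generically'' without this mechanism, and that assertion is exactly what must be proved (the Pujals--Sambarino dichotomy you invoke for surfaces is a density statement and does not by itself yield it). Neither gap reflects a wrong idea, but both constitute the actual content of the paper's proof rather than routine details.
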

It is worth noting that, while entropy flexibility is known for some classes of discrete-time dynamical systems, to the best of our knowledge, no similar results are available for the continuous-time framework, even in the more classical settings of hyperbolic sets or Anosov flows. In this sense, the primary goal of this paper is to fill the gap in results concerning entropy flexibility for flows, as well as to derive consequences for discrete-time systems.

In what follows, we present more precisely our main results. In order to improve the readability of this introduction, we will provide here the basic setting, where our results fit, while the precise definitions for more technical concepts that appear in 
 the present section will be defined 
 when needed in the following sections.
Hereafter $M$ denotes a compact metric space and $f\colon M\to M$ denotes a homeomorphism. We use the symbol $\phi$ to denote any flow over $M$. When $M$ is a smooth Riemannian manifold and $\phi$ is of class $C^1$, we denote by $X_\phi$ or simply just $X$, the vector field induced by $\phi$.

 An immediate consequence of the entropy flexibility for full shifts is that the suspension flow of the full shift map by a \emph{constant} roof function also exhibits flexibility of entropy. However, the same question applies to more general suspensions of general shifts, which is much more delicate. Firstly, entropy flexibility does not always hold for all shifts (in particular, minimal ones). Secondly, constructing subsets with a given value of topological entropy for a flow is considerably more challenging. The main difficulty lies in the relation between the entropy of a measure $\mu$ on the base $\sigma$ and the entropy of its lift $\hat\mu$ for the suspension flow $\phi$, which differs by an integral of the roof function $\rho$, according to Abramov's formula:
$$
h_{\tilde{\mu}}(\phi) = \frac{h_{\mu}(\sigma)}{\int \rho \, d\mu}.
$$
Thus, if $\rho$ is not constant, the integral $\int \rho \, d\mu$ varies with the measure.

In \cite{LSWW}, it was proven that every suspension flow of a subshift of finite type satisfies the intermediate entropy property. As a consequence, the authors showed that a similar result also holds for star flows. However, when attempting to promote their results to obtain entropy flexibility, we face a new challenge. The main difficulty is that, when trying to use metric entropy to construct compact sets with suitable topological entropy, we must simultaneously control the growth of entropy provided by the variational principle applied to the measures $\hat{\mu}$ and the distortion caused by the integral $\int \rho \, d\mu$ in Abramov's formula.

In our first main result, we extend the results from \cite{LSWW}, showing that suspension flows of shifts of finite type have entropy flexibility. An important tool in our approach is approximation built through the shadowing property of the map in the base. For this purpose, we need to deal with more sophisticated coding techniques. Consequently, in addition to immediate improvement, the nature of our techniques allows us to show that not only does entropy flexibility hold, but actually any intermediate value of entropy can be realized by a system with homogeneous dynamics.

\begin{mainthm}
\label{thm: SFT}
Let $(\sigma_A,\Sigma_A)$ be any subshift of finite type and $\rho\colon \Sigma_A\to (0,\infty)$ be any continuous function. Let $\phi\colon \R\times \Lambda\to \Lambda$ be the suspension flow generated by $(\sigma_A,\rho)$. For any $0\leq c< h_{top}(\phi)$ there exists a compact and invariant subset $\Gamma_c\subset \Lambda$ such that:
\begin{enumerate}
\item The flow $\phi|_{\Gamma_c}$ is strictly ergodic.
\item The topological entropy $h_{top}(\phi|_{\Gamma})$ is equal to $c$.
\end{enumerate} 
Furthermore, considering $\mu_c$ as the unique invariant measure of $\phi_t|_{\Gamma_c}$, we conclude that $\phi$ has entropy flexibility.

\end{mainthm}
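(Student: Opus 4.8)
The plan is to realize any intermediate entropy value $c$ on a strictly ergodic subflow by first working in the base subshift of finite type and then lifting through the suspension, using Abramov's formula to compensate for the non-constant roof function. The starting point is the classical fact that a transitive (in fact, mixing) SFT $(\sigma_A,\Sigma_A)$ has a rich supply of strictly ergodic subshifts: for any target base-entropy value $b$ with $0\le b< h_{top}(\sigma_A)$ one can build a minimal, uniquely ergodic subshift $Y_b\subseteq\Sigma_A$ with $h_{top}(\sigma_A|_{Y_b})=h_{\mu_b}(\sigma_A)=b$, where $\mu_b$ is its unique invariant measure. Such subshifts are produced via renewal-system / coding constructions: one takes a carefully chosen language of admissible blocks whose growth rate is prescribed, and then passes to a minimal subsystem (e.g. a Toeplitz-type or substitution-like reorganization of those blocks) that retains the full complexity while becoming uniquely ergodic; the shadowing property of $\sigma_A$ is what guarantees that the blocks we splice together remain genuine orbits of $\sigma_A$. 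This is the technical heart and where the bulk of the work lies, but it is exactly the coding machinery the introduction advertises, so I would cite or develop the construction lemma and treat it as available.

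Second, I would transfer this to the flow. Fix $c$ with $0\le c< h_{top}(\phi)$. By the variational principle for suspension flows together with Abramov's formula, $h_{top}(\phi)=\sup_\mu \frac{h_\mu(\sigma_A)}{\int\rho\,d\mu}$. The continuity and positivity of $\rho$ on the compact space $\Sigma_A$ give uniform bounds $0<\rho_{\min}\le\rho\le\rho_{\max}<\infty$, so for any invariant $\mu$ on the base, $\int\rho\,d\mu\in[\rho_{\min},\rho_{\max}]$. The strategy is to choose, among the strictly ergodic base subsystems $Y_b$ from the first step, one whose unique measure $\mu_b$ satisfies $\frac{h_{\mu_b}(\sigma_A)}{\int\rho\,d\mu_b}=c$. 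Setting $\Gamma_c$ to be the part of the suspension manifold lying over $Y_b$ — i.e. $\Gamma_c=\{(x,s):x\in Y_b,\ 0\le s\le\rho(x)\}/\!\sim$ — yields a compact, $\phi$-invariant set; the suspension of a uniquely ergodic base map is again uniquely ergodic (the lifted measure $\hat\mu_b$ is the only $\phi$-invariant probability supported there, and minimality of $Y_b$ gives minimality of the subflow, hence strict ergodicity), and by Abramov $h_{top}(\phi|_{\Gamma_c})=h_{\hat\mu_b}(\phi)=\frac{h_{\mu_b}(\sigma_A)}{\int\rho\,d\mu_b}=c$. This gives conclusions (1) and (2) simultaneously, and the "Furthermore" clause — entropy flexibility of $\phi$ — is then immediate, taking $\mu_c:=\hat\mu_b$.

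The one genuinely delicate point, and the main obstacle, is the \emph{simultaneous} control in the previous paragraph: as we vary the base subsystem we must hit a prescribed value of the \emph{ratio} $h_{\mu_b}(\sigma_A)/\int\rho\,d\mu_b$, not merely of the numerator. I would handle this by an intermediate-value / continuity argument: build a one-parameter family $\{Y_b\}_{b\in[0,B]}$ of strictly ergodic base subsystems whose base-entropies sweep an interval and whose unique measures $\mu_b$ depend on $b$ in a weak-$*$ continuous way (again this is arranged inside the coding construction — small changes in the admissible block language produce small changes in both complexity and in the unique measure). Then $b\mapsto\frac{h_{\mu_b}(\sigma_A)}{\int\rho\,d\mu_b}$ is continuous (the numerator is continuous in $b$ by construction, and $b\mapsto\int\rho\,d\mu_b$ is continuous since $\rho$ is continuous and $\mu_b$ varies weak-$*$ continuously), it takes the value $0$ at $b=0$ (the measure is supported on a periodic orbit, zero base-entropy), and it can be pushed above $c$ by taking $b$ large enough and the roof-integral close to its minimum — here one uses that the supremum defining $h_{top}(\phi)$ can be approached by such homogeneous subsystems, which is itself part of what the construction delivers. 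By the intermediate value theorem there is a $b$ with ratio exactly $c$, completing the proof. One must be mildly careful that the family $\{Y_b\}$ can be chosen so that its measures $\mu_b$ realize roof-integrals spanning enough of $[\rho_{\min},\rho_{\max}]$ to reach values of the ratio above $c$; since $c<h_{top}(\phi)$ strictly, a compactness argument on near-optimal measures for $\phi$ lets us approximate any sub-maximal ratio, which is exactly the ingredient needed.
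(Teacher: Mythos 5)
Your overall strategy coincides with the paper's: reduce to producing a strictly ergodic subshift $Y\subset\Sigma_A$ whose unique measure $\nu$ satisfies $h_\nu(\sigma_A)=c\int\rho\,d\nu$, then take the part of the suspension over $Y$, and use Abramov's formula plus unique ergodicity of the suspended subflow to get both conclusions at once. The lifting step and the identification of the ratio $h_\nu(\sigma_A)/\int\rho\,d\nu$ as the quantity to be controlled are exactly right, and the first ingredient (strictly ergodic subshifts of a transitive SFT with prescribed intermediate entropy, built by coding/renewal techniques) is indeed available.

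The genuine gap is in the mechanism you propose for hitting the ratio \emph{exactly}, namely the intermediate value argument. It rests on the existence of a one-parameter family $\{Y_b\}$ of strictly ergodic subshifts with $h_{top}(Y_b)=b$ whose unique measures $\mu_b$ vary weak-$*$ continuously in $b$. No standard construction produces such a continuous family: the subshifts $Y_b$ for nearby values of $b$ are assembled from different block languages, there is no reason for their unique measures to be close to one another, and metric entropy is only upper semicontinuous on $\SM_\sigma(\Sigma_A)$, so continuity cannot be recovered a posteriori. Without that continuity the IVT does not apply, and a single application of the approximation machinery only tells you that $\int\rho\,d\mu_b$ lies in a small interval around $\int\rho\,d\mu$ for a reference measure $\mu$; the equation $b=c\int\rho\,d\mu_b$ then becomes a fixed-point problem with a target that moves with the unknown $\mu_b$. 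The paper resolves precisely this difficulty by replacing the IVT with an iterative refinement (its Theorem~\ref{thm: reduction2}): it builds a nested sequence of subshifts of finite type $X_1\supset X_2\supset\cdots$ such that at stage $n$ every invariant measure $\eta$ of $X_n$ satisfies $\mathfrak{d}(\eta,\nu_{n-1})\le 2\kappa_{n-1}$ and $(1-\delta_n)\int\rho\,d\nu_{n-1}<\int\rho\,d\eta<(1+\delta_n)\int\rho\,d\nu_{n-1}$, while $h_{top}(X_n)$ is pinned between $(1+\delta_n)^2c\int\rho\,d\nu_{n-1}$ and $(1+3\delta_n)c\int\rho\,d\nu_{n-1}$; with $\delta_n,\kappa_n\to 0$ the intersection is strictly ergodic and the ratio of its unique measure converges to exactly $c$ (upper semicontinuity of entropy supplying the lower bound). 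To repair your outline you would need to replace the IVT step by such a successive-approximation scheme, or else actually prove the weak-$*$ continuity of $b\mapsto\mu_b$ for a suitable family, which is at least as hard as the paper's construction.
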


\begin{remark}
    To avoid unnecessary repetitions, we would like to remark that for all the results in this work, the entropy flexibility is achieved by constructing strictly ergodic subsystems.

\end{remark}

We will see that, by using finite extensions, the above theorem holds for any expansive system with the shadowing property instead of the sub-shift of finite type. Theorem~\ref{thm: SFT} has several useful applications. As we will see, it can be used to prove
flexibility for several classes of flows generated by differentiable vector fields over compact manifolds.
In our next result, we prove  Conjecture \ref{conj} in the low-dimensional scenario.

\begin{mainthm}
\label{thm: generic}
Entropy flexibility holds $C^1$-generically for:
\begin{enumerate}
    \item\label{thm:generic:1} Flows generated by vector fields over three-dimensional manifolds.
    \item\label{thm:generic:2} Surface homeomorphisms. 
   \end{enumerate}
\end{mainthm}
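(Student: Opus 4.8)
The plan is to deduce Theorem~\ref{thm: generic} from Main Theorem~\ref{thm: SFT} by reducing, $C^1$-generically, to hyperbolic pieces equipped with a symbolic (suspension-of-SFT) model. The common scheme is: given a $C^1$-generic system in the relevant class and $0\le c<h_{top}$, produce a compact invariant set $K$ whose restricted dynamics is topologically conjugate — up to a finite-to-one flow extension in the continuous-time case, via a Markov partition — to the suspension flow of a subshift of finite type over a continuous positive roof (flow case) or to a subshift of finite type (diffeomorphism case), and with $h_{top}(f|_K)>c$. Then Main Theorem~\ref{thm: SFT} applied to this model yields a strictly ergodic subsystem of entropy exactly $c$, which one pushes back into $M$; since finite-to-one factor maps and topological conjugacies preserve topological entropy and unique ergodicity (and compactness and invariance are immediate), this produces the required $\Gamma_c\subset K\subset M$, i.e.\ entropy flexibility. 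The case $h_{top}=0$ is vacuous, and $h_{top}=\infty$ causes no trouble once the reduction is phrased as ``$h_{top}=\sup$ over hyperbolic pieces'', since then pieces of arbitrarily large entropy exist.

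For part~\ref{thm:generic:2} (surface diffeomorphisms), the ingredient I need is a $C^1$-generic Katok-type statement: for a $C^1$-generic surface diffeomorphism $f$, $h_{top}(f)=\sup\{h_{top}(f|_\Lambda)\}$ over transitive hyperbolic horseshoes $\Lambda$. I would take this from the literature — for $C^{1+\alpha}$ maps it is Katok's theorem, and in the $C^1$-generic category it follows from the $C^1$-generic structural theory of surface diffeomorphisms (Pujals--Sambarino, Ma\~n\'e, Crovisier and collaborators), using $C^1$-generic hyperbolicity and homoclinic-class arguments in place of Pesin theory, which is unavailable in class $C^1$. Fixing $c<h_{top}(f)$, choose such a horseshoe $\Lambda$ with $h_{top}(f|_\Lambda)>c$. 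Since $\Lambda$ is topologically conjugate to a subshift of finite type, I apply Main Theorem~\ref{thm: SFT} with constant roof $\rho\equiv 1$ to the suspension of $\Lambda$ and transfer the resulting strictly ergodic subsystem down to the base: for constant roof, compact invariant subsets, their topological entropies, and unique ergodicity correspond bijectively between a special flow and its base by Abramov's formula. This produces a strictly ergodic subset of $\Lambda\subset M$ realizing $c$.

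For part~\ref{thm:generic:1} (flows on $3$-manifolds), the ingredient is the analogue for $C^1$-generic vector fields $X$ on a closed $3$-manifold $M$: $h_{top}(X)=\sup\{h_{top}(X|_\Lambda)\}$ over nontrivial hyperbolic basic sets $\Lambda$ of the flow. This I would extract from the $C^1$-generic structural theory of three-dimensional flows: on the region away from singularities one argues through hyperbolic homoclinic classes as in the diffeomorphism case, while the pieces accumulating on singularities are singular-hyperbolic, and for such (sectional-hyperbolic, star) pieces the entropy is approximated from within by hyperbolic horseshoes, so no entropy is lost at the singularities. Given $c<h_{top}(X)$, pick a nontrivial hyperbolic basic set $\Lambda_c$ with $h_{top}(X|_{\Lambda_c})>c$. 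By Bowen's theorem on symbolic dynamics for hyperbolic flows, a Markov section exhibits $(\Lambda_c,X)$ as a finite-to-one flow extension of a suspension flow of a subshift of finite type with a H\"older-continuous, hence continuous, strictly positive roof function. Applying Main Theorem~\ref{thm: SFT} to that suspension and pushing the resulting strictly ergodic subsystem, together with its unique invariant measure, down into $\Lambda_c$ gives the compact invariant $\Gamma_c\subset\Lambda_c\subset M$ with $h_{top}(\phi|_{\Gamma_c})=h_{\mu_c}(\phi)=c$, proving entropy flexibility for $X$ and thus Conjecture~\ref{conj} in dimension three.

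The main obstacle is precisely the $C^1$-generic ``entropy equals the supremum over hyperbolic pieces'' reduction, not the symbolic step. For surface diffeomorphisms this is the $C^1$ analogue of Katok's horseshoe theorem, which needs $C^1$-generic substitutes for Pesin theory; for $3$-flows it additionally requires controlling the interaction of entropy with singularities through the singular-hyperbolic (star-flow) structure of $C^1$-generic flows, which is the technical core and explains the role of these notions here. A secondary but necessary bookkeeping point is to check that Bowen's symbolic model for a hyperbolic basic set of a flow really has the precise form demanded by Main Theorem~\ref{thm: SFT} — a suspension of an SFT over a continuous positive roof — and that the strictly ergodic subsystem and its unique invariant measure transport faithfully through the finite-to-one extension and any conjugacies, so that the final subsystem genuinely lives inside the ambient manifold.
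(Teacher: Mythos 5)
Your overall strategy coincides with the paper's: reduce, $C^1$-generically, to a hyperbolic horseshoe of entropy $>c$ and then invoke Theorem~\ref{thm: SFT} (the symbolic/suspension step, including the finite-to-one bookkeeping via Bowen's theorem, is handled exactly as you describe). The problem is that you have outsourced the entire content of the theorem. The statement ``for a $C^1$-generic system, $h_{top}$ is the supremum of entropies over hyperbolic horseshoes'' is not something you can simply pull from the structural theory you name, and your sketch of how it would be proved for $3$-flows is not viable: a $C^1$-generic $3$-flow does not decompose into hyperbolic homoclinic classes plus singular-hyperbolic pieces, so ``no entropy is lost at the singularities'' cannot be argued that way. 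In the $C^1$ category Pesin theory is unavailable, and the paper must (and does) build the reduction by hand.

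Concretely, the paper's argument for the flow case runs as follows, and none of it appears in your proposal. First, for an ergodic measure $\mu$ with $h_\mu(\phi)$ close to $h_{top}(\phi)$, Ruelle's inequality applied to $X$ and to $-X$ in dimension $3$ forces one positive and one negative exponent, hence $\mu$ is hyperbolic, with the quantitative bound $h_\mu(\phi)\leq\min\{|\lambda^+|,|\lambda^-|\}$. Second, there is a dichotomy: if the Oseledets splitting of $\mu$ is dominated, a Katok-type horseshoe with entropy $>h_\mu(\phi)-\delta$ exists directly (\cite{LW}*{Proposition 4.1}); if not, one uses Ma\~n\'e's ergodic closing lemma to approximate $\mu$ by periodic orbits with controlled exponents and then a perturbation theorem (\cite{LW}*{Theorem 3.6}) to create hyperbolic horseshoes only for \emph{nearby} vector fields $Y_n$. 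Third --- and this is the step your proposal has no substitute for --- to bring those horseshoes back to the unperturbed flow one needs a genericity mechanism: the paper defines a compact-set-valued map $Z\mapsto\overline{\mathcal{V}_Z}$ recording (measure, hyperbolic horseshoe, entropy) triples, proves it is lower semicontinuous by hyperbolic stability, and restricts to the residual set of its continuity points, where the limit of the perturbed horseshoes is realized by the flow itself. Finally the local residual sets $\SR_X$ must be patched into a single residual subset of $\mathfrak{X}^1(M)$, which requires the disjointification argument at the end of the proof. For surfaces the same scheme is repeated with the discrete-time analogues (\cite{WL}). Without supplying this machinery (or a genuine citation that already contains the $C^1$-generic horseshoe-approximation statement in the required form for $3$-flows), your proof has a gap precisely at the step you yourself identify as the technical core.
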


In the higher-dimensional setting, we can apply Theorem~\ref{thm: SFT} to investigate the entropy flexibility for a large set of flows exhibiting hyperbolic-like behavior. Hyperbolicity is a cornerstone of dynamical systems theory, playing a central role in studying stability. Introduced by Smale in~\cite{Smale}, it serves as both a source of chaotic dynamics and a foundation for structural stability. In the context of flows, however, hyperbolicity can only occur when singularities are isolated from regular orbits. Despite this restriction, several fundamental examples---most notably the Lorenz attractor---exhibit robust or persistent chaotic behavior while featuring singularities accumulated by regular orbits. To account for such phenomena, generalized notions of hyperbolicity have been developed, including sectional hyperbolicity (more generally, asymptotically sectional-hyperbolicity), multisingular hyperbolicity, and the star property (see~\cites{Ma,Liao,MPP1,MPP2,MS,BDL,SMV}).
Our next result shows that entropy flexibility holds for many systems with hyperbolic-like behavior.  
\begin{mainthm}\label{thm: variousflows}
\label{c.app}
Let $X$ be a $C^1$-vector field over $M$ and $\Lambda\subset M$ be a compact and invariant set. 
\begin{enumerate}
\item\label{thmC:a} If $X$ generates  a star vector flow, then $X$  has entropy flexibility.
\item\label{thmC:b} If $\Lambda$ is an asymptotically sectional-hyperbolic attracting set for $X$, then $X|_{\Lambda}$ has entropy flexibility. 
\end{enumerate}

\end{mainthm}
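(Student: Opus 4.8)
The plan is to derive Theorem~\ref{thm: variousflows} from Theorem~\ref{thm: SFT} by showing that, in both cases, all of the topological entropy is already ``visible'' inside a \emph{hyperbolic} basic set, and that such a set carries a symbolic model to which Theorem~\ref{thm: SFT} applies directly.

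First I would fix $0\le c<h_{top}(X)$ in case~\ref{thmC:a} (resp.\ $0\le c<h_{top}(X|_\Lambda)$ in case~\ref{thmC:b}) and reduce the problem to producing a hyperbolic basic set carrying entropy larger than $c$. For this I would invoke the structure theory of the classes in question: for a star vector field, the topological entropy is carried by the chain recurrent set, whose nontrivial classes are singular-hyperbolic, and the entropy of a singular-hyperbolic piece is approximated from below by hyperbolic horseshoes; analogously, for an asymptotically sectional-hyperbolic attracting set $\Lambda$ the topological entropy of $X|_\Lambda$ is approximated by hyperbolic basic sets contained in $\Lambda$ (this is the point where the (singular-)hyperbolicity theory, cf.\ \cites{MPP1,MPP2,MS,BDL,SMV}, enters). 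Either way one obtains a hyperbolic basic set $\Gamma$ of $X$, with $\Gamma\subset\Lambda$ in case~\ref{thmC:b}, such that $h_{top}(X|_\Gamma)>c$.

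Next I would feed $\Gamma$ into the symbolic machinery. By Bowen's construction of Markov sections for hyperbolic basic sets of flows, there are a subshift of finite type $(\sigma_A,\Sigma_A)$, a continuous roof function $\rho\colon\Sigma_A\to(0,\infty)$ (the first-return time to the cross-section, automatically bounded below by a positive constant and Hölder), and a finite-to-one continuous equivariant surjection $\pi$ from the suspension flow $\widehat\phi$ generated by $(\sigma_A,\rho)$ onto $X|_\Gamma$. Since $\pi$ is finite-to-one it preserves topological entropy, so $h_{top}(\widehat\phi)=h_{top}(X|_\Gamma)>c$, and Theorem~\ref{thm: SFT} yields a compact invariant set $\widehat\Gamma_c$ on which $\widehat\phi$ is strictly ergodic, with unique invariant measure $\widehat\mu_c$ and $h_{top}(\widehat\phi|_{\widehat\Gamma_c})=c$. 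I would then put $\Gamma_c:=\pi(\widehat\Gamma_c)$ and $\mu_c:=\pi_*\widehat\mu_c$: the set $\Gamma_c$ is compact and invariant; it is minimal because a continuous equivariant image of a minimal set is minimal; and it is uniquely ergodic because every invariant measure on $\Gamma_c$ lifts to an invariant measure on $\widehat\Gamma_c$, hence equals $\widehat\mu_c$. Finiteness of the fibers of $\pi$ gives $h_{top}(X|_{\Gamma_c})=h_{top}(\widehat\phi|_{\widehat\Gamma_c})=c$, and then the variational principle applied to the strictly ergodic system $X|_{\Gamma_c}$ gives $h_{\mu_c}(X)=c$ as well. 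As $c$ was an arbitrary value below the relevant topological entropy, this establishes entropy flexibility, again realized by strictly ergodic subsystems.

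The hard part will be the first step: establishing that hyperbolic basic sets exhaust the topological entropy of a star flow and of an asymptotically sectional-hyperbolic attracting set. Everything downstream --- the Bowen coding, and the facts that finite-to-one semiconjugacies preserve topological entropy and transfer minimality and unique ergodicity --- is soft, and Theorem~\ref{thm: SFT} does all the genuinely constructive work once the right $\Gamma$ has been isolated. The ``entropy carried by horseshoes'' statement, by contrast, is a purely (singular-)hyperbolic phenomenon that must be extracted from the corresponding structure theory rather than from the renewal-system techniques of this paper.
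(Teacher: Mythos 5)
Your overall architecture matches the paper's: both reduce the theorem to producing horseshoes whose entropy approximates $h_{top}$ from below, and then invoke Theorem~\ref{thm: SFT}. Your downstream transfer (coding a hyperbolic basic set, pushing strict ergodicity and entropy through a finite-to-one equivariant surjection) is sound, though in the paper it is unnecessary: a ``horseshoe'' is \emph{defined} there as a set on which the flow is conjugate to a suspension over a shift, so Theorem~\ref{thm: SFT} applies with no Bowen coding.

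The genuine gap is the first step, which you yourself flag as ``the hard part'' and then do not carry out; worse, the specific route you sketch for star flows does not work in general. It is \emph{not} true that the nontrivial chain recurrence classes of a star flow are singular-hyperbolic --- this fails in dimension $\geq 5$, which is precisely why multisingular hyperbolicity was introduced in \cite{BDL} --- so you cannot lean on singular-hyperbolic structure theory to exhaust the entropy by horseshoes. The paper's mechanism is measure-theoretic and avoids this: one shows that \emph{every ergodic measure} of a star flow is hyperbolic (\cite{SGW}*{Theorem E}) and that its linear Poincar\'e flow admits a dominated splitting over $\supp(\mu)\setminus\Sing(X)$ with $\dim(E)=\Ind(\mu)$ (condition~(D), via \cite{LSWW}*{Lemma 2.6}); then a $C^1$ Katok-type horseshoe theorem for flows with domination (\cite{LW}*{Proposition 4.1}) converts a sequence of such measures with $h_{\mu_n}(\phi)\to h_{top}(\phi)$ into horseshoes of nearly full entropy. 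For the asymptotically sectional-hyperbolic case the paper verifies condition~(D) by hand: hyperbolicity of ergodic measures comes from \cite{APRV}, the dominated splitting $E\oplus F$ lifts to the linear Poincar\'e flow since the flow direction lies in $F$, and a computation with the sectional expansion \eqref{ash} shows that all Lyapunov exponents in $F$ transverse to the flow are positive, whence $\dim(E)=\Ind(\mu)$. Without identifying this mechanism (or some substitute), your proof of the key approximation step is missing, and in the star-flow case the substitute you propose is unavailable.
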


The diagram in Figure~\ref{fig:hyper} presents some well-known implications between several hyperbolic-like properties for vector field.

\begin{figure}[!ht]
\begin{center}
\scalebox{0.75}{ 
\begin{tikzpicture}[
  node distance=1.6cm and 2.8cm,
  box/.style={rectangle, draw, rounded corners, minimum width=3cm, minimum height=0.9cm, align=center, font=\footnotesize},
  imply/.style={->, double equal sign distance, thick}
  ]

  \node[box] (H) {Hyperbolic};
  \node[box, right=of H] (SH) {Sectional-Hyperbolic};
  \node[box, right=of SH] (MSH) {Multi-Singular Hyperbolic};

  \node[box, below=of SH] (ASH) {Asymptotically Sectional-Hyperbolic};

  \node[box, below=of MSH] (LS) {Star};

  \draw[imply] (H) -- (SH);
  \draw[imply] (SH) -- (ASH);
  \draw[imply] (SH) -- (MSH);
  \draw[imply] (MSH) -- (LS);

\end{tikzpicture}
}
\end{center}
    \caption{Relations between selected hyperbolic-like properties of vector field.}
    \label{fig:hyper}
\end{figure}
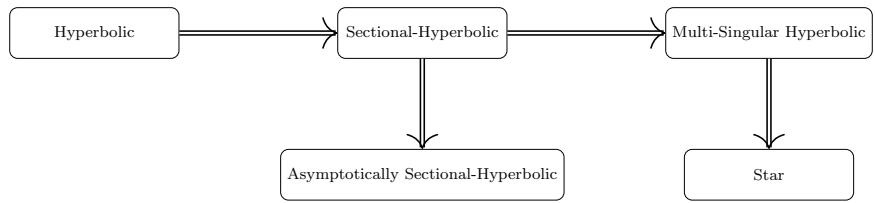

\newpage
\begin{remark}
Notice that:
\begin{enumerate}
\item The same holds if the manifold $M$ has a boundary and the vector field points inward at the boundary. We only need to consider the maximal invariant subset $M(X)=\bigcap_{t\geq 0}\phi_t(M)$ in place of the whole manifold such as the sectional-Anosov flows introduced in \cite{Mo}. In particular, the result holds for attracting sets. 
\item The asymptotically sectional-hyperbolic sets includes the Rovella attractor and the contracting horseshoes as archetypal examples (see \cites{SMV, MSM}).
\end{enumerate}
\end{remark}
Following the implications on Figure~\ref{fig:hyper} and the previous remarks, we obtain the following results as an immediate corollary.

\begin{mainclly}
The entropy flexibility is satisfied  for any of the vector fields or invariant sets  in the following list:

\begin{enumerate}
   
    \item Hyperbolic sets and more generally Axiom A or Anosov flows.
    \item Lorenz-like attractors and more generally sectional-hyperbolic or multi-singular hyperbolic flows
    \item Sectional-Anosov flows
    \item Rovella-like attractors.
\end{enumerate}
\end{mainclly}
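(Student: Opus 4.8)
The plan is to derive the final Corollary directly from Theorem~\ref{thm: variousflows} together with the implications displayed in Figure~\ref{fig:hyper} and the two remarks that follow it, so that essentially no new argument is required — one just has to verify that each class in the list is subsumed under the hypotheses of one of the two parts of Theorem~\ref{thm: variousflows}.

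First I would handle item (1). Hyperbolic sets and, more generally, Axiom~A or Anosov flows are star flows: indeed, hyperbolicity of the (chain recurrent part of the) vector field is robust, so every periodic orbit of every nearby vector field is hyperbolic, which is exactly the star condition. Hence part~\eqref{thmC:a} of Theorem~\ref{thm: variousflows} applies and gives entropy flexibility. (If one wants to phrase the statement for the invariant set itself rather than the ambient manifold, one invokes the boundary/attracting-set version from the first remark, restricting to the maximal invariant set $M(X)=\bigcap_{t\ge 0}\phi_t(M)$.)

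Next, items (2) and (3): by the diagram, Hyperbolic $\Rightarrow$ Sectional-Hyperbolic, and Sectional-Hyperbolic $\Rightarrow$ Multi-Singular Hyperbolic $\Rightarrow$ Star; therefore Lorenz-like attractors, sectional-hyperbolic flows, and multi-singular hyperbolic flows all generate star vector fields, and part~\eqref{thmC:a} again yields entropy flexibility. Sectional-Anosov flows are the special case where the maximal invariant set is sectional-hyperbolic and the manifold has boundary with the field pointing inward; this is precisely the situation covered by the first remark after Theorem~\ref{thm: variousflows}, so entropy flexibility holds for $X|_{M(X)}$. Finally, item (4): the second remark after Theorem~\ref{thm: variousflows} records that the Rovella attractor (and contracting horseshoes) are asymptotically sectional-hyperbolic attracting sets; hence part~\eqref{thmC:b} applies and gives entropy flexibility for $X$ restricted to a Rovella-like attractor.

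I do not expect a genuine obstacle here, since the corollary is by design a bookkeeping consequence of Theorem~\ref{thm: variousflows}. The only point that needs a little care — and thus the closest thing to a ``hard part'' — is making sure that each named class literally satisfies the standing hypotheses: that ``sectional-Anosov'' and ``Rovella-like attractor'' are interpreted in the attracting/maximal-invariant-set sense so that the boundary version of the theorem applies, and that the chain of implications in Figure~\ref{fig:hyper} is used in the correct direction (stronger hyperbolicity $\Rightarrow$ star). Once the definitions are lined up, the proof is a one-line invocation of Theorem~\ref{thm: variousflows} for each of the four items, and the Corollary follows.
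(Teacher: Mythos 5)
Your proposal is correct and follows exactly the route the paper intends: the corollary is stated as an immediate consequence of Theorem~\ref{thm: variousflows} via the implication chain in Figure~\ref{fig:hyper} (stronger hyperbolicity $\Rightarrow$ star, and sectional-hyperbolic $\Rightarrow$ asymptotically sectional-hyperbolic) together with the two remarks covering the maximal-invariant-set/boundary case and the Rovella examples. No further comment is needed.
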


As we will see, our results further apply to explore the entropy for the diffeomorphism scenario.  The starting point of our approach is the following immediate corollary of Theorem \ref{thm: SFT}, which is obtained by considering suspensions with constant roof function $\rho\equiv 1$.
\begin{corollary}\label{cor: SFT}
    If $\sigma:\Si\to\Si$ is a shift of finite type, then $\si$ has entropy flexibility.
\end{corollary}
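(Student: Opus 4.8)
The plan is to obtain Corollary~\ref{cor: SFT} as a direct specialization of Theorem~\ref{thm: SFT}. Given a shift of finite type $\sigma\colon\Sigma_A\to\Sigma_A$, I would take the constant roof function $\rho\equiv 1$ and form the associated suspension flow $\phi$ on $\Lambda=\Sigma_A\times[0,1]/\!\sim$. With a constant roof, the suspension flow is simply the mapping torus, and there is a canonical correspondence between $\sigma$-invariant structures on $\Sigma_A$ and $\phi$-invariant structures on $\Lambda$: Abramov's formula degenerates to $h_{\hat\mu}(\phi)=h_\mu(\sigma)$ since $\int\rho\,d\mu=1$, and likewise $h_{top}(\phi)=h_{top}(\sigma)$ (both by Abramov at the topological level, or because the time-one map of $\phi$ restricted to a global cross-section is conjugate to $\sigma$).

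Next I would fix $0\le c<h_{top}(\sigma)=h_{top}(\phi)$ and apply Theorem~\ref{thm: SFT} to this $c$, producing a compact $\phi$-invariant set $\Gamma_c\subset\Lambda$ on which $\phi$ is strictly ergodic with $h_{top}(\phi|_{\Gamma_c})=c$ and unique invariant measure $\mu_c$. The remaining step is to push this down to the base: I would intersect $\Gamma_c$ with the cross-section $\Sigma_A\times\{0\}$ (equivalently, take the set of base points whose full flow-orbit lies in $\Gamma_c$, which is automatic by invariance) to obtain a compact $\sigma$-invariant set $\Delta_c\subset\Sigma_A$. Because $\rho\equiv 1$, the suspension of $\sigma|_{\Delta_c}$ is exactly $\phi|_{\Gamma_c}$ (the first-return time is identically $1$), so unique ergodicity and minimality transfer back to $\sigma|_{\Delta_c}$, and $h_{top}(\sigma|_{\Delta_c})=h_{top}(\phi|_{\Gamma_c})=c$ by the constant-roof Abramov identity. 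Letting $\nu_c$ be the unique $\sigma|_{\Delta_c}$-invariant measure, the variational principle on the uniquely ergodic system $\sigma|_{\Delta_c}$ gives $h_{\nu_c}(\sigma)=h_{top}(\sigma|_{\Delta_c})=c$, which is precisely the entropy flexibility requirement of Definition~\ref{def: EF}.

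There is no serious obstacle here; the only point demanding a little care is verifying that $\Gamma_c$ indeed meets the cross-section in a set that genuinely carries the dynamics — i.e.\ that restricting a constant-roof suspension to an invariant subset is again the suspension of the restriction of the base map to an invariant subset, with the same (constant) return time. This is a standard and routine fact about mapping tori, so I would state it briefly and invoke it. One should also note the degenerate case $h_{top}(\sigma)=0$, for which the interval of admissible $c$ is empty and there is nothing to prove. Thus the corollary follows immediately, and it is this corollary that will serve as the entry point for the diffeomorphism applications discussed afterward.
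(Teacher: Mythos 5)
Your proposal is correct and is exactly the paper's intended argument: the paper derives this corollary as an "immediate" consequence of Theorem~\ref{thm: SFT} by taking the constant roof function $\rho\equiv 1$, and your fleshing out of the transfer between the mapping torus and the base (via the cross-section and the degenerate Abramov identity) is the routine verification the paper leaves implicit. Nothing is missing.
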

With this result in hand, we shall see in Section \ref{sec: PH} that entropy flexibility is verified for some classes of partially hyperbolic diffeomorphisms. 

This article is organized as follows in Section \ref{section: prelim} we provide the reader with some basic concepts and results that will be used throughout this work. In section \ref{sec: SFT} we prove Theorem \ref{thm: SFT}. Section \ref{sec: smoothflows} is devoted to the study of entropy flexibility for smooth flows. In particular, we prove Theorems \ref{thm: generic} and \ref{thm: variousflows}. In Section \ref{sec: PH} we apply our findings to explore the entropy flexibility of partially hyperbolic diffeomorphisms.

\section{Preliminaries}\label{section: prelim}
In this section, we collect basic definitions used throughout this work and also some basic, well-known results. Hereafter, $M$ denotes a compact metric space which will also be regarded as a measurable space endowed with the Borel $\sigma$-algebra.  

\subsection{Discrete-time dynamics.}
Throughout this text, we will deal with invertible dynamics. By a discrete-time system, we mean a homeomorphism $ f: M \to M $. Later, when it becomes necessary, $ M $ will be regarded as a Riemannian manifold and $ f $ as a $ C^1 $-diffeomorphism. Denote by $\SK(M)$ the set of compact nonempty subsets of $M$ endowed with the Hausdorff metric: $$d^H(A,B)=\inf\{\eps>0; A\subset B_{\eps}(B) \textrm{ and } B\subset B_{\eps}(A)\}.$$ 
It turns out that $\SK(M)$ is a compact metric space, due the compactness of $M$.
A subset $ K \subset M $ is said to be {\it invariant} if $ f(K) = K $. In this context, if $ K \subset M $ is invariant, then $ f|_K $ can be seen as a dynamical system, which is simply a subsystem of $ f $. In this way, all the following concepts can also be stated for subsystems of $ f $. 

The {\it orbit} of a point $ x \in M $ is the set
$$\{ f^n(x); n \in \mathbb{Z} \}.$$
This defines the set of all iterates of $ x $ under the dynamics of $ f $. We say that $ f $ is {\it topologically transitive} if for any pair of non-empty open sets $ U $ and $ V $, there exists $ n > 0 $ such that $ f^{-n}(U) \cap V \neq \emptyset $.  It is well known that on compact metric spaces without isolated points, topological transitivity is equivalent to the existence of a {\it transitive point}, i.e., a point whose orbit is dense in $ M $. 
We say that a compact and invariant subset $ K \subset M $ is a {\it minimal set} if it does not contain any non-trivial compact and invariant subsets. Also, we say that $ f $ is {\it minimal} if $ M $ itself is a minimal set. It is classical that $ f $ is minimal if and only if every point in $ M $ is dense in $ M $.   
Let $ f\colon M \to M $ and $ g\colon K \to K $ be two homeomorphisms. We say that $ f $ \textit{factors} over $ g $ if there is a continuous surjective map $ \pi: M \to K $ (a \textit{factor map}) such that $ \pi \circ f = g \circ \pi $. If in addition $\pi$ is a homeomorphism, we say that $f$ and $g$ are topologically conjugated.

Now, let us recall the definition of topological entropy. Let $ K $  be a compact subset of $ M $ (not necessarily invariant). Fix an integer $n > 0 $ and $\varepsilon>0$. A subset $ Q \subset K $ is called an {\it $ n $-$ \varepsilon $-separated subset} of $ K $ if for any pair of distinct points $ x, y \in K $, there exists $ 0 \leq n_0 \leq n $ such that
$$ d(f^{n_0}(x), f^{n_0}(y)) > \varepsilon. $$ 
This condition ensures that points in the subset $ Q $ are sufficiently far apart after applying the map $ f $ iteratively. Let $ S(n, \varepsilon, K) $ denote the maximal cardinality of an $ n $-$ \varepsilon $-separated subset of $ K $. Notice that $ S(n, \varepsilon, K) $ is always finite due to the compactness of $ M $.

\begin{definition}
We define the topological entropy of $ f $ on $ K $ as the number
$$ h(f, K) = \lim\limits_{\varepsilon \to 0} \limsup\limits_{n \to \infty} \frac{1}{n} \log(S(n, \varepsilon, K)). $$
The topological entropy of $ f $ is defined as the number
$$ h(f) = h(f, M). $$ 
\end{definition}

Next, we recall the concept of the shadowing property. A sequence $ (x_n)_{n \in \mathbb{Z}} $ is called a {\it $ \delta $-pseudo orbit} if $ d(f(x_n), x_{n+1}) \leq \delta $ for every $ n \in \mathbb{Z} $.  We say that a $ \delta $-pseudo-orbit $ (x_n) $ is {\it $ \epsilon $-shadowed} if there exists $ y \in M $ such that $ d(f^n(y), x_n) \leq \epsilon $ for every $ n \in \mathbb{Z} $. 

\begin{definition}
We say that $ f $ has the {\it shadowing property} if, for every $ \epsilon > 0 $, there exists $ \delta > 0 $ such that every $ \delta $-pseudo-orbit is $ \epsilon $-shadowed by some point in $ M $. 
\end{definition}

\subsection{Smooth ergodic theory of discrete-time systems}
We say that a Borel probability measure $ \mu $ on $ M $ is an \textit{invariant measure} for $ f $ if $ \mu(f^{-1}(A)) = \mu(A) $ for every Borel subset $ A $. Denote by $ \mathcal{M}_f(M) $ the set of invariant measures for $ f $. The Krylov-Bogolyubov Theorem states that $ \mathcal{M}_f(M) $ is always nonempty. An $ f $-invariant measure is \textit{ergodic} if $ \mu(A) \mu(A^c) = 0 $, for every Borel and $ f $-invariant set $ A $. Let us denote $ \SM^e_f(M) $ for the set of ergodic measures, which is also always nonempty. 
The system $ f $ is considered uniquely ergodic if $ f $ admits a unique invariant measure.

\begin{definition}
We say that $ f $ is strictly ergodic if it is minimal and uniquely ergodic. This means that there is exactly one invariant measure, and the system is minimal in the sense that there are no proper invariant subsets.
\end{definition}

For each $x\in M$, define the $n$-th empirical measure of $x$ as 
$$\SE_n(x)=\frac{1}{x}\sum_{j=0}^{n-1}\delta_{f^j(x)},$$
where $\delta_y$ denotes the Dirac measure at $y$.

The space $\SM(M)$ can be endowed with a metric order to make it a compact metric space as follows (see \cite{Du}). For any bounded real-valued function $\phi:M\to \R$, denote $$||\phi||_L=\sup\left\{\frac{|\phi(x)-\phi(y)|}{d(x,y)};x\neq y      \right\}.$$
Let $BL(M)$ denote the set of bounded real-valued Lipschitz functions on $M$, i.e.,  satisfying $||\phi||_L<\infty$. The set $BL(M)$ is dense in the set $C(M,\R)$ consisting of continuous real-valued functions on $M$. Fix $||\phi||_{BL}=||\phi||_{\infty}+ ||\phi||_L$. Let $(\phi_n)$ be a sequence of functions in $BL(M)$ satisfying $||\phi_n||_{BL}\leq 1$ and dense in $C(M,\R)$. So, for any pair of measures $\mu,\nu\in \SM(M)$, define $$\mathfrak{d}(\mu,\nu)=\sum_{n=1}^{\infty}\frac{1}{2^n}\left|\int \phi_nd\mu-\int \phi_nd\nu \right|.$$ 
In this way, $\mathfrak{d}$ is a metric on $\SM(M)$ whose induced topology coincides with its weak$^*$-topology. So, whenever we write $\mu_n\to \mu$, we mean that the convergence is being considered in the metric $\mathfrak{d}$.
We say that $x$ is a {\it generic point of $\mu\in \SM_f(M)$}, if $\SE_n(x)\to \mu.$

The \textit{metric entropy} of an $ f $-invariant measure $ \mu $ is given by
\begin{displaymath}
h_{\mu}(f) = \sup \lbrace h_{\mu}(f, \mathcal{P}) : \mathcal{P} \text{ is a finite and measurable partition of } M \rbrace, 
\end{displaymath}
where
\begin{displaymath}
h_{\mu}(f, \mathcal{P}) = -\lim_{n \to \infty} \frac{1}{n} \sum_{P \in \mathcal{P}_n} \mu(P) \log \mu(P), \quad
 \mathcal{P}_n = \mathcal{P} \vee f^{-1}(\mathcal{P}) \vee \cdots \vee f^{n-1}(\mathcal{P}).
\end{displaymath}

The celebrated variational principle relates the metric entropies of a continuous map and its topological entropy:

\begin{theorem}[Variational Principle]
For any continuous map $ f $, it holds:
\begin{eqnarray*} 
h(f) &=& \sup \{ h_{\mu}(f) ; \mu \in \mathcal{M}_f(M) \}\\ 
&=& \sup \{ h_{\mu}(f) ; \mu \in \mathcal{M}^e_f(M) \}.
\end{eqnarray*}
\end{theorem}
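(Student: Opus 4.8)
The plan is to prove the two nontrivial inequalities of the variational principle separately, in the classical manner of Goodwyn (for $h_\mu(f)\le h(f)$) and Goodman--Dinaburg (for $h(f)\le\sup_\mu h_\mu(f)$), and then reduce the supremum to ergodic measures via the ergodic decomposition. The inclusion $\mathcal M^e_f(M)\subset\mathcal M_f(M)$ makes one of the two displayed equalities trivial, so the real content is the chain $\sup\{h_\mu(f):\mu\in\mathcal M^e_f(M)\}\le h(f)\le\sup\{h_\mu(f):\mu\in\mathcal M_f(M)\}\le\sup\{h_\mu(f):\mu\in\mathcal M^e_f(M)\}$.

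For $h_\mu(f)\le h(f)$, fix $\mu\in\mathcal M_f(M)$ and a finite Borel partition $\mathcal P=\{A_1,\dots,A_k\}$. Using inner regularity of $\mu$ I would pass to a nearby partition with good topological structure: choose compact sets $B_i\subset A_i$ with $\mu(A_i\setminus B_i)$ arbitrarily small, set $B_0=M\setminus\bigcup_{i\ge1}B_i$, and let $\mathcal Q=\{B_0,B_1,\dots,B_k\}$. The standard inequality $h_\mu(f,\mathcal P)\le h_\mu(f,\mathcal Q)+H_\mu(\mathcal P\mid\mathcal Q)$ (with $H_\mu$ the static, resp. conditional, partition entropy) reduces matters to $\mathcal Q$, since the conditional term is small. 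Then I fix $\epsilon$ smaller than a Lebesgue number associated with the disjoint compact sets $B_1,\dots,B_k$, so that each $\epsilon$-ball meets at most one of them; a counting argument bounds the static entropy of the refinement $\mathcal Q_n$ above by $\log$ of the number of cells an $(n,\epsilon)$-spanning set must occupy, i.e. essentially by $\log S(n,\epsilon,M)$ up to a controlled error coming from the atom $B_0$. Dividing by $n$, letting $n\to\infty$ and then $\epsilon\to0$ gives $h_\mu(f,\mathcal Q)\le h(f)$, and taking the supremum over $\mathcal P$ yields $h_\mu(f)\le h(f)$.

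For the reverse inequality, fix $\epsilon>0$, write $s_n=S(n,\epsilon,M)$, and let $E_n$ be an $(n,\epsilon)$-separated set with $\card(E_n)=s_n$. Form $\nu_n=\frac1{s_n}\sum_{x\in E_n}\delta_x$ and average along the orbit, $\mu_n=\frac1n\sum_{j=0}^{n-1}f^j_*\nu_n$. By compactness of $\mathcal M(M)$ in the weak$^*$ topology (metrized by $\mathfrak d$), extract a subsequence with $\mu_{n_i}\to\mu$, and a routine telescoping estimate shows $\mu\in\mathcal M_f(M)$. Next choose a finite partition $\mathcal P$ with $\diam(P)<\epsilon$ for every atom $P$ and with $\mu(\partial P)=0$ for every atom — possible because only countably many ball radii around a given point can produce spheres of positive $\mu$-measure. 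Since each atom of $\mathcal P_n$ contains at most one point of $E_n$, we have $\log s_n=H_{\nu_n}(\mathcal P_n)$; applying concavity of $t\mapsto-t\log t$ to the average defining $\mu_n$ together with the subadditivity of $H_{(\cdot)}(\mathcal P_m)$ along blocks of the orbit, one obtains for each fixed $m$ a bound of the form $\frac1{n_i}\log s_{n_i}\le\frac1m H_{\mu_{n_i}}(\mathcal P_m)+O(m/n_i)$. Letting $i\to\infty$ (here $\mu(\partial\mathcal P_m)=0$ forces $H_{\mu_{n_i}}(\mathcal P_m)\to H_\mu(\mathcal P_m)$) and then $m\to\infty$ gives $\limsup_n\frac1n\log s_n\le h_\mu(f,\mathcal P)\le h_\mu(f)$; letting $\epsilon\to0$ yields $h(f)\le\sup_\mu h_\mu(f)$. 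Finally, to pass to ergodic measures I would invoke the ergodic decomposition $\mu=\int\mu_\omega\,d\tau(\omega)$ and the affine identity $h_\mu(f)=\int h_{\mu_\omega}(f)\,d\tau(\omega)$, which forces $h_\mu(f)\le\sup\{h_\nu(f):\nu\in\mathcal M^e_f(M)\}$ for every $\mu$.

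The main obstacle is the second inequality, and inside it the combinatorial concavity estimate bounding $\frac1n\log s_n$ by $\frac1m H_{\mu_n}(\mathcal P_m)$, combined with the delicate choice of a partition of diameter $<\epsilon$ whose atoms have $\mu$-null boundary, so that weak$^*$ convergence $\mu_{n_i}\to\mu$ actually transfers to convergence of the partition entropies. The first inequality is comparatively soft (it only uses regularity of $\mu$ and a spanning-set count), and the reduction to ergodic measures is routine once the ergodic decomposition is available; this is, of course, a classical theorem, so in the final text one may simply cite \cite{Walters} instead of reproducing the argument.
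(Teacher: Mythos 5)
Your proposal is the classical Goodwyn--Misiurewicz proof of the variational principle (together with the ergodic decomposition and the affinity of $\mu\mapsto h_\mu(f)$ to reduce to ergodic measures), and it is correct as a sketch. The paper does not prove this statement at all --- it is quoted as a standard background theorem --- so there is nothing to compare beyond noting that your argument is exactly the one in Walters \cite{Walters}, which is the appropriate citation here.
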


We now turn our attention to the differentiable ergodic theory of diffeomorphisms. Let $\mu$ be an $f$-invariant Borel probability measure.

\begin{definition}
A number $\lambda \in \mathbb{R}$ is called a \emph{Lyapunov exponent} of $f$ at $x$ (with respect to $\mu$) if there exists a nonzero vector $v \in T_x M$ such that
$$
\lim_{n \to \infty} \frac{1}{n} \log \Vert Df^n(x) v \Vert = \lambda.
$$
\end{definition}

\begin{theorem}[Oseledets' Theorem]
Let $f: M \to M$ be a $C^1$ diffeomorphism and let $\mu$ be an $f$-invariant Borel probability measure. Then there exists a $\mu$-full measure set $R \subset M$ such that for every $x \in R$, there exist:
\begin{itemize}
    \item real numbers $\lambda_1(x) \leq \cdots \leq \lambda_k(x)$;
    \item a measurable $Df$-invariant splitting $T_x M = E^1_x \oplus \cdots \oplus E^k_x$,
\end{itemize}
such that for all $v \in E^i_x \setminus \{0\}$,
$$
\lim_{n \to \pm\infty} \frac{1}{n} \log \Vert Df^n(x) v \Vert = \lambda_i(x).
$$
\end{theorem}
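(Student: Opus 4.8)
This is the classical multiplicative ergodic theorem of Oseledets; in a paper of this kind it would normally be quoted rather than proved, so the plan below simply reproduces one of the standard routes — through Kingman's subadditive ergodic theorem and exterior powers — adapted to the invertible ($C^1$-diffeomorphism) setting. \textbf{Step 1 (linearization).} The plan is to first replace the tangent dynamics by a measurable linear cocycle: choosing a measurable trivialization of $TM$ over a full-measure set (or embedding $M\hookrightarrow\mathbb{R}^N$ and using the ambient frame), represent $Df$ by a measurable map $A\colon M\to GL(d,\mathbb{R})$, $d=\dim M$, with $A^n(x)=A(f^{n-1}x)\cdots A(x)$ for $n\ge0$ and the analogous product of inverses for $n<0$. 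Since $f$ is $C^1$ and $M$ is compact, $\log^{+}\|A^{\pm1}\|$ is bounded, hence in $L^1(\mu)$, which is the only integrability needed.

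\textbf{Step 2 (the exponents).} For each $1\le k\le d$ the function $x\mapsto\log\|\Lambda^kA^n(x)\|$ (operator norm on the $k$-th exterior power) is a subadditive cocycle over $f$, because $\Lambda^kA^{n+m}(x)=\Lambda^kA^m(f^nx)\cdot\Lambda^kA^n(x)$. Kingman's subadditive ergodic theorem then produces a $Df$-invariant measurable function $\Lambda_k\colon M\to[-\infty,\infty)$ with $\frac1n\log\|\Lambda^kA^n(x)\|\to\Lambda_k(x)$ a.e. Reading off the successive increments $\lambda_i(x):=\Lambda_i(x)-\Lambda_{i-1}(x)$ (with $\Lambda_0\equiv0$) and grouping equal values yields the distinct Lyapunov exponents together with their multiplicities $m_i(x)$; restricting to an ergodic component makes all of these a.e.\ constant.

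\textbf{Step 3 (forward filtration).} The technical heart is the one-sided statement: a measurable decreasing filtration $T_xM=V_1(x)\supsetneq\cdots\supsetneq V_{r+1}(x)=\{0\}$ with $\dim V_i(x)-\dim V_{i+1}(x)=m_i(x)$ and, crucially, $\lim_{n\to+\infty}\frac1n\log\|A^n(x)v\|=\lambda_i(x)$ — an honest limit, not merely a $\limsup$ — for every $v\in V_i(x)\setminus V_{i+1}(x)$. I would obtain this through the singular value decomposition $A^n(x)=U_nD_nV_n^{\ast}$: Step 2 identifies the exponential rates of the singular values $s^{(n)}_j(x)$, and one shows that the ``slow'' singular subspaces $V_n^{\ast}(\mathrm{span}\{e_j : s^{(n)}_j \text{ small}\})$ converge in the appropriate Grassmannian, the limit being $V_i(x)$; the rate of this convergence, and hence the upgrade from $\limsup$ to $\lim$, is controlled by the spectral gaps $\lambda_i>\lambda_{i+1}$ together with a Borel–Cantelli argument. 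Invariance $A(x)V_i(x)=V_i(fx)$ is immediate from the cocycle relation, and measurability of $x\mapsto V_i(x)$ is inherited from that of $A$. This step is where I expect essentially all the difficulty to lie: controlling \emph{directions}, not just norms, and handling near-coincidences of consecutive exponents is delicate.

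\textbf{Step 4 (backward filtration and the splitting).} Applying Step 3 to the cocycle generated by $f^{-1}$ gives a second, increasing family $\{0\}=W_0(x)\subsetneq W_1(x)\subsetneq\cdots\subsetneq W_r(x)=T_xM$ with $\lim_{n\to-\infty}\frac1n\log\|A^n(x)v\|=\lambda_i(x)$ for $v\in W_i(x)\setminus W_{i-1}(x)$. Setting $E^i_x:=V_i(x)\cap W_i(x)$, one checks $\dim E^i_x=m_i(x)$ and $T_xM=\bigoplus_i E^i_x$ by an ``angles do not decay exponentially'' argument: an exponentially shrinking angle between $V_i(x)$ and the complement provided by $W$ would contradict the singular value count of Step 2. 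The resulting splitting is $Df$-invariant and measurable, and for $v\in E^i_x\setminus\{0\}$ the growth rate equals $\lambda_i(x)$ both forward (from $v\in V_i(x)$) and backward (from $v\in W_i(x)$), which is exactly the two-sided conclusion. Finally one lets $R$ be the full-measure set on which all of these limits exist and the constructions are valid. The transversality of the forward and backward filtrations used here is a second, and much milder, obstacle compared with Step 3.
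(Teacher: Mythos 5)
The paper does not prove this statement: it is quoted as classical background (Oseledets' multiplicative ergodic theorem), so there is no in-paper argument to compare yours against. Your outline is a faithful sketch of one of the standard proofs — reduce $Df$ to a measurable $GL(d,\mathbb{R})$-cocycle with $\log^{+}\Vert A^{\pm1}\Vert\in L^1(\mu)$, extract the exponents from Kingman's theorem applied to $\log\Vert\Lambda^k A^n\Vert$, build the forward filtration from the limits of slow singular subspaces, and intersect it with the backward filtration of the inverse cocycle to get the splitting $E^i_x=V_i(x)\cap W_i(x)$ — and none of the steps as described is wrong. The caveat, which you acknowledge yourself, is that this is a proof plan rather than a proof: Step 3 (convergence of the singular subspaces in the Grassmannian, and the upgrade from $\limsup$ to an honest limit via the spectral gaps and Borel--Cantelli) carries essentially all of the content of the theorem, and Step 4's transversality of the two filtrations also requires a genuine argument; neither is carried out. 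For a result of this status that would in any case be cited rather than reproved, identifying the correct route is the reasonable thing to do, but as a standalone proof the submission is incomplete at exactly the point where the difficulty lies.
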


\medskip

When all Lyapunov exponents of a measure are nonzero, the measure exhibits strong dynamical behavior, often leading to the existence of stable and unstable manifolds.

\begin{definition}
We say that an $f$-invariant ergodic measure $\mu$ is \emph{hyperbolic} if all its Lyapunov exponents are nonzero $\mu$-almost everywhere.
\end{definition}

\subsection{Basic Symbolic Dynamics}
Let us begin by recalling some important facts about symbolic dynamics. The reader interested in a more detailed treatment of this topic is referred to \cite{Bruin} or \cite{Kurka} for a more detailed exposition on the subject. Let $ A_n = \{ 0, 1, \dots, n-1 \} $ and denote $ \Sigma_n = A_n^{\mathbb{Z}} $, i.e. $ \Sigma_n $ is the set of bi-infinite sequences formed by elements of $ A_n $. The set $ \Sigma_n $ is endowed with the metric ($s,s'\in \Sigma_n$, $s\neq s'$):
$$
d(s, s') = \frac{1}{2^{|i|}},
$$
where $i$ is an integer with largest $|i|$ and such $ s_j = s'_j $, for every $ |j| < |i| $, but $ s_i \neq s'_i $. It is easy to see that $(\Sigma_n,d) $ is a compact metric space. We define the \textit{shift map} 
$$
\sigma: \Sigma_n \to \Sigma_n,
$$
defined by $ \sigma((s_i)) = (s_{i+1}) $. The map $\sigma$ is clearly a homeomorphism. We call $(\Sigma_n,\sigma)$ the \textit{full shift on $n$ symbols}.
It is well known that $ h(\sigma) = \log(n) $. If  $\Si\subset\Si_n$ is  compact and $\si$-invariant, we say that $(\Sigma,\si|_\Sigma)$ is a subshift of $(\Si_n,\sigma)$.  We often simplify the notation and say that $\Si$ is a shift space or subshift. An $n\times n$-matrix with entries in $\{0,1\}$ is called a {\it transition matrix}. 
\begin{definition}
    We say that $\Si_A\subset \Sigma_n$ is a 
    \textit{ vertex shift}, if there is a transition matrix $A$ such that 
    $$
    \Si_A=\{(x_i)_{i\in \Z}\in \{0,...,k-1\}; A_{x_1x_{i+1}}={ 1}, \forall i\in \Z\}.
    $$
A subshift $(\Sigma,\sigma)$ is a \textit{shift of finite type (SFT for short)} if it is conjugated to a vertex shift $(\Sigma_A,\sigma)$.
\end{definition}
In simpler terms, $\Sigma$ is a shift of finite type if there is  $m$ and a finite collection of rules that determine whether sequences of symbols $x_0x_1\ldots x_{m-1}$ and $x_1\ldots x_{m-1}x_m$ can appear that overlap in $(x_i)_{i\in \Z}\in\Sigma$. An important fact about shifts of finite type is that they characterize the symbolic dynamics displaying the shadowing property, precisely:

\begin{theorem}
    A shift map is of finite type if and only if it has the shadowing property. 
\end{theorem}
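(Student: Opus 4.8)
The statement to prove is: \emph{A shift map is of finite type if and only if it has the shadowing property.}

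\medskip

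The plan is to prove the two implications separately, and in each direction to exploit the combinatorial characterization of SFTs via forbidden words (equivalently, via a vertex-shift presentation). Throughout, I will work with a fixed subshift $(\Sigma,\sigma)\subset(\Sigma_n,\sigma)$ and use the metric defined above, for which $d(s,s')\le 2^{-m}$ precisely when $s$ and $s'$ agree on all coordinates $j$ with $|j|\le m$. The key translation I would set up first is: an $\varepsilon$ of size $2^{-m}$ corresponds to ``agreement on the central block of radius $m$'', and a $\delta$ of size $2^{-M}$ corresponds to a $\delta$-pseudo-orbit being a sequence of points $(x^{(k)})_{k\in\Z}$ in $\Sigma$ such that $\sigma(x^{(k)})$ and $x^{(k+1)}$ agree on coordinates $[-M,M]$; concretely, the block $x^{(k)}_{[-M+1,M]}$ of $x^{(k)}$ coincides with $x^{(k+1)}_{[-M,M-1]}$ of $x^{(k+1)}$, so consecutive points overlap in a window of length $2M$ that is shifted by one. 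This ``overlapping windows'' picture is the engine of both directions.

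\medskip

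\textbf{SFT $\Rightarrow$ shadowing.} Assume $\Sigma=\Sigma_A$ is (conjugate to) a vertex shift, so membership in $\Sigma$ is detected by a sliding window of some fixed length $L$ (for a vertex shift $L=2$, but I would phrase it for a general finite set of forbidden words of length $\le L$ to make the conjugacy-invariance transparent). Given $\varepsilon>0$ choose $m$ with $2^{-m}<\varepsilon$, and then choose $M>m+L$ and $\delta<2^{-M}$. Let $(x^{(k)})_{k\in\Z}$ be a $\delta$-pseudo-orbit. Define $y\in\Sigma_n$ coordinatewise by $y_k:=x^{(k)}_0$ (the ``diagonal'' point). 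The overlap condition guarantees that for each $k$ the block $y_{[k-M,k+M-1]}$ equals the central block $x^{(k)}_{[-M,M-1]}$ of a genuine point of $\Sigma$; since $M>L$, every window of length $L$ in $y$ occurs inside some admissible block, hence $y\in\Sigma$. Moreover $d(\sigma^k y, x^{(k)})\le 2^{-m}<\varepsilon$ because $\sigma^k y$ and $x^{(k)}$ agree on coordinates $[-m,m]\subset[-M,M-1]$. So $y$ $\varepsilon$-shadows the pseudo-orbit, and $\sigma$ has the shadowing property.

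\medskip

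\textbf{Shadowing $\Rightarrow$ SFT.} This is the harder and more delicate direction, and the main obstacle is to extract a \emph{finite} window length from the $\varepsilon$--$\delta$ data. Fix $\varepsilon=2^{-1}$ (or any value $<1$, so that shadowing forces agreement at coordinate $0$), and let $\delta=2^{-M}$ be the corresponding number from the shadowing property. The claim is that $\Sigma$ is the SFT determined by declaring a word $w$ of length $2M+2$ \emph{admissible} iff $w$ appears in some point of $\Sigma$; equivalently $\Sigma=\{x\in\Sigma_n : \text{every length-}(2M+2)\text{ block of }x\text{ occurs in }\Sigma\}$. One inclusion is trivial. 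For the other, suppose every length-$(2M+2)$ block of some $x\in\Sigma_n$ occurs in $\Sigma$; for each $k$ pick $x^{(k)}\in\Sigma$ whose central block of width $\ge 2M+2$ reproduces the corresponding block of $x$ around position $k$ — more precisely arrange that $x^{(k)}_j=x_{k+j}$ for $|j|\le M$ (possible by the choice of window length, shifting coordinates appropriately). Then $(x^{(k)})_{k\in\Z}$ is a $\delta$-pseudo-orbit in $\Sigma$: $\sigma x^{(k)}$ and $x^{(k+1)}$ agree on $[-M,M-1]$ since both restrict to the block $x_{[k-M+1,k+M]}$. By shadowing there is $y\in\Sigma$ with $d(\sigma^k y,x^{(k)})<1$ for all $k$, hence $y_k=\sigma^k y|_0=x^{(k)}_0=x_k$; thus $y=x$, so $x\in\Sigma$. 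This shows $\Sigma$ is cut out by finitely many forbidden blocks of a fixed length, i.e.\ it is an SFT. I would finish by noting that both the shadowing property and being an SFT are invariants of topological conjugacy, so the argument via vertex shifts / forbidden words loses no generality; the only point requiring care is the bookkeeping of coordinate shifts when passing between the one-sided indexing of blocks in a point of $\Sigma$ and the $\Z$-indexing of the pseudo-orbit.
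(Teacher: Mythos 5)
Your proof is correct and is the standard argument (due originally to Walters) for this classical equivalence, which the paper states without proof as a known fact, so there is no competing argument in the paper to compare against. Both directions are sound: the ``diagonal point'' $y_k = x^{(k)}_0$ construction for SFT $\Rightarrow$ shadowing, and the use of a single shadowing constant to extract a finite admissible-window length for the converse, are exactly the right ideas; the only blemishes are harmless off-by-one slips in translating $d(s,s')\leq 2^{-m}$ into agreement of coordinates, and the final passage from a forbidden-words presentation to a vertex shift (via the higher-block recoding) is left implicit but is standard and consistent with the paper's own informal description of SFTs.
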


Let $x\in \{0,...,k-1\}$ and for $-\infty\leq i\leq j \leq \infty$, define $x_{[i,j]}=x_ix_{i+1}\cdots x_j$. We say that $w$ is a word in $x$, if $w=x_{[i,j]}$, for some pair $i$ and $j$. The length of a word $w=w_0w_1\ldots w_{n-1}$ is defined as $|w|=n$. Given two words $w$ and $w'$, we say that $w'$ is a sub-word of $w$, if $w=x_{[i,j]}$ and $w'=x_{[i',j']}$, for some $i\leq i'\leq j'\leq j$.  

Let $\Si$ be a shift space. The language of $\Si$ is defined to be the set of words 
$$
\SL(\Si)=\{w: w \textrm{ is a finite sub-word of a point } x\in \Si \}.
$$
For any $n\geq 1$ we define $\SL_n(\Si)=\{w\in \SL(\Si): |w|=m\}$ formed by words of length $n$. The next two concepts will play a central role in our proofs:
\begin{definition}
    We say that a shift space $\Si$ is a renewal shift if there is a set of words $\SW$ so that $\Si$ is the space of sequences formed by free concatenations of the words in $\SW$.  
\end{definition}

\begin{definition}
    A renewal system $\Si$  over the set of words $\SW$ is said to be \textit{uniquely decipherable} if for every $x\in \Si$, there is a unique way of writing $x$ as a concatenation of words in  $\SW$. 
\end{definition}

 In what follows, we will need the following useful property of renewal systems, proved first in \cite{Ai}.
\begin{theorem}\label{thm: Ai}
    Every uniquely decipherable renewal shift is a shift of finite type.  
\end{theorem}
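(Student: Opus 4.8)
The statement to prove is Theorem~\ref{thm: Ai}: every uniquely decipherable renewal shift $\Sigma$ over a finite set of words $\SW$ is a shift of finite type. The plan is to build an explicit vertex shift that is conjugate to $\Sigma$ by remembering, at each coordinate, the position of the current symbol inside the word of $\SW$ that it belongs to, together with enough of the word's past to make the dynamics a nearest-neighbour constraint. Let $L=\max_{w\in\SW}|w|$. First I would pass to a higher block presentation: replace the alphabet of $\Sigma$ by the set $B=\SL_L(\Sigma)$ of admissible words of length $L$, with the usual conjugacy $\Sigma\cong\Sigma^{[L]}$ sending $x$ to the sequence of its length-$L$ windows. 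This is always a conjugacy of $\Sigma$ with a subshift on $B$, so it suffices to show the latter is an SFT, and more precisely that whether $uv$ is allowed (for $u,v\in B$ overlapping in $L-1$ symbols) can be decided by a finite list of rules. The content is that, because of unique decipherability, from a window of length $L$ one can \emph{reconstruct} where the boundaries between consecutive $\SW$-words fall inside that window, and this reconstruction is consistent across overlapping windows.

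The key step is the \emph{parsing lemma}: for a uniquely decipherable renewal system, there is a well-defined function assigning to each bi-infinite point $x\in\Sigma$ its unique decomposition into $\SW$-words, equivalently a ``marking'' $m\colon\Z\to\{0,1\}$ where $m(i)=1$ iff position $i$ begins a new word of the parse. I would first check that $m$ is determined \emph{locally}: there is a radius $r$ (one may take $r=L$, or a bit more) such that $m(i)$ is a function of $x_{[i-r,\,i+r]}$ alone. The argument: unique decipherability forces any two parses of any long-enough finite word to agree on its interior; more carefully, if $x_{[i-r,i+r]}=y_{[j-r,j+r]}$ then the parse of $x$ restricted to a neighbourhood of $i$ and the parse of $y$ near $j$ must coincide at the centre, for otherwise one produces two distinct parses of a common finite block, contradicting uniqueness (one splices the two parses to build a periodic, or just a long, point with two decompositions). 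Granting this, the marking is a sliding-block code of the point, hence can be read off the length-$(2r+1)$ window, and in particular off the length-$L$ window after enlarging $L$ to $2r+1$ if needed.

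Once the marking is a sliding-block function of bounded-length windows, the remaining step is routine: define the vertex shift $\Sigma_A$ on $B$ (with $L$ already enlarged so $L\ge 2r+1$) by declaring $u\to v$ admissible iff $u,v$ overlap correctly in $L-1$ symbols, the overlap word $u_{[2,L]}=v_{[1,L-1]}$ lies in $\SL(\Sigma)$, and the markings computed from $u$ and from $v$ are consistent on the overlap and ``legal'' in the sense that every maximal block between two consecutive $1$'s in the combined marking is a word of $\SW$. A point of $\Sigma^{[L]}$ satisfies these transition rules at every coordinate iff its underlying sequence admits a consistent global marking into $\SW$-words, i.e.\ iff it lies in (the image of) $\Sigma$; conversely every point of $\Sigma$ gives such an admissible path via its unique parse. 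This yields a conjugacy $\Sigma\cong\Sigma_A$, so $\Sigma$ is an SFT. The main obstacle is the parsing lemma, specifically pinning down the correct radius $r$ and proving locality of the marking cleanly: one has to rule out ``long-range'' ambiguity, e.g.\ the possibility that two parses agree everywhere except far away yet force a different local marking, and the cleanest way is the splicing/periodicity argument — take a finite block with two distinct parses, repeat it to get a periodic bi-infinite point with two distinct decompositions, contradicting unique decipherability — after which locality with $r\le L$ falls out.
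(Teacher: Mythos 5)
The paper itself gives no proof of this statement: it is imported from Johnson--Madden \cite{Ai}, so there is no internal argument to compare against. Your strategy --- pass to a higher-block presentation and show that unique decipherability makes the word-boundary marking a sliding-block (finite-radius) function of the point --- is the standard route and does lead to a proof; you have correctly isolated the parsing lemma as the crux.

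Two points need repair. First, your justification of the parsing lemma by ``splicing two local parses into a periodic (or long) point'' is the weak link. If $x,y\in\Sigma$ agree on $[-r,r]$ but are marked differently at $0$, the restrictions of their parses to $[-r,r]$ carry partial words at both ends; they are \emph{not} two factorizations of a common finite block into $\SW$-words, so they cannot simply be concatenated or repeated periodically to produce a bi-infinite point with two parses (the periodic repetition need not even lie in $\Sigma$). The clean argument is compactness on pairs (point, marking): if no radius works, choose $x^{(r)},y^{(r)}$ agreeing on $[-r,r]$ with opposite marks at $0$, and pass to limits of the points and of the two marking sequences. Since any valid marking has consecutive marks at distance at most $L=\max_{w\in\SW}|w|$, both limit markings are genuine parsings of the common limit point, and they disagree at $0$, contradicting unique decipherability. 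This produces \emph{some} finite $r$ but no explicit bound; your parenthetical claims that one may take $r\le L$ are unjustified (and unnecessary, since you enlarge the window afterwards anyway). Second, in the final SFT construction the local rule must also force at least one mark in every window of length $L$ --- otherwise the identically zero marking vacuously satisfies ``every block between consecutive marks lies in $\SW$'' --- and the window length must be at least on the order of $2r+L$ so that the condition ``the block between consecutive marks is a word of $\SW$'' is genuinely checkable on a single window. With these two repairs the argument is complete and matches the classical proof.
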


\subsection{Continuous-Time Dynamical Systems}
Next, we recall some basic facts about flows.  The reader interested in a complete treatise about the concepts presented here, as well as an excellent exposition of the flows theory, is referred to \cite{Kat}. A continuous-time system or {\it flow} is a  continuous map 
$ \phi: \mathbb{R} \times M \to M $ satisfying:
    \begin{enumerate}
        \item $ \phi(0, x) = x $, for every $ x \in M $.
        \item $ \phi(t + s, x) = \phi(t, \phi(s, x)) $, for any $ x \in M $ and any $ t, s \in \mathbb{R} $.
    \end{enumerate}
    We denote by $ \phi_t $ the time-$ t $ map $ \phi(t, \cdot) $.  
When $ M $ is a compact Riemannian manifold, and $ \phi $ is a 
 $C^1 $-map, we can equivalently define $ \phi $ through its velocity vector field. Indeed, if $ X $ is a $ C^1 $-vector field over $ M $, then $ X $ induces on $ M $ a flow whose velocity vector field is $ X $. On the other hand, one can assign to any $ C^1 $-flow $\phi$ on $ M $, a velocity vector field  $ X_\phi $. In this work, we will often denote by  $ X $ the velocity vector field that induces $ \phi $, whenever there is no risk of confusion about the flow generated by $X$.

We say that a subset $ K \subset M $ is \textit{$ \phi $-invariant} if $ \phi_t(K) = K $, for every $ t \in \mathbb{R} $. A point $ x \in M $ is a \textit{singularity} for $ \phi $ if $ X(x) = 0 $. We denote the singularities of $ \phi $ by $ \Sing(
  X)$  or simply $\Sing(\phi)$. A point $ x \notin \Sing(X) $ is said to be a \textit{regular point}. A regular point is \textit{periodic} for $ \phi $ if there is $ \eta > 0 $ such that $ \phi_{\eta}(x) = x $. We say that a flow is a \textit{regular flow} if its set of singularities is empty.

Similarly to the homeomorphism setting, we say that a Borel probability measure $ \mu $ is $ \phi $-invariant,  or that $\phi$ preserves the measure $\mu$, if one has $ \mu(\phi_t(A)) = \mu(A) $, for every $ t \in \mathbb{R} $ and every measurable set $ A $. A $ \phi $-invariant measure is said to be ergodic if $ \mu(A) \mu(A^c) = 0 $, for every $ \phi $-invariant Borel set. We  denote $ \mathcal{M}_\phi(M) $ and $ \mathcal{M}_\phi^e(M) $ for the sets of invariant and ergodic measures, respectively.

\begin{definition}
    The \textit{topological entropy} of a flow $ \phi $ is the topological entropy of its time-one map, i.e., 
    $$
    h_{top}(\phi) := h_{top}(\phi_1).
    $$
    Similarly, if $\mu$ is an invariant measure for the flow $\phi$ then we define $h_\mu(\phi):=h_\mu(\phi_1).$
\end{definition}
Now, we shall recall the concept of suspension flow, which will play a central role in our exposition. Let $ f\colon M \to M $ be a homeomorphism and $ \rho\colon M \to \mathbb{R} $ be a continuous function. Let $ \overline{M} $ be the quotient space of $ M \times \mathbb{R} $, through the equivalence relation $ (x, \rho(x)) \sim (f(x), 0) $.  We call $\rho$ the \textit{roof function}.

\begin{definition}\label{Def-Susp}
    The \textit{suspension flow} of $ f $ with the roof function $ \rho $ is the flow $ \phi\colon \mathbb{R} \times \overline{M} \to \overline{M} $ defined by setting $ \phi(t, (x, s)) = (f^n(x), s') $, where $ n $ and $ s' $ satisfy
    $$
    \sum_{j=1}^{n-1} \rho(f^j(x)) + s' = t + s, \quad 0 \leq s' \leq \rho(f^n(x)).
    $$
\end{definition}
 It is not hard to see that $\phi$ form Definition~\ref{Def-Susp} is indeed a flow. An interesting fact about suspension flows is that there is a bijective map $ \Phi: \mathcal{M}_f(M) \to \mathcal{M}_\phi(
 \overline{M}) $ mapping each invariant measure $ \mu $ of $ f $ into an invariant measure $ \Phi(\mu) = \hat{\mu} $ of  $\phi$ on  $\overline{M}$. The measure $ \hat{\mu} $ is called the lift measure of $ \mu $. Moreover, one can relate the entropies of $ \mu $ and $ \hat{\mu} $ through the celebrated Abramov formula:
$$
h_{\hat{\mu}}(\phi) = \frac{h_\mu(\sigma)}{\int \rho \, d\mu}.
$$

The following definition is a standard generalization of the concept of Smale's horseshoe to the setting of flows.  It is a nice tool for estimating topological entropy.

\begin{definition}
    Let $\Lambda\subset M$ be a compact and invariant set for a flow $\phi$ on $M$. We say that $\Lambda$ is
    a \textit{horseshoe} of $\phi$ if there is a
suspension flow $\psi$ on $\overline{\Sigma_n}$, $n\geq 2$ with a continuous roof function $\rho\colon \Sigma_n
\to (0,\infty)$ and a homeomorphism $h\colon \Lambda\to \overline{\Sigma_n}$ conjugating these flows, i.e.
$\psi_t\circ h=h\circ \phi_t$ for every $t\in \R$.
\end{definition}

\section{Entropy Flexibility and Suspension Flows}\label{sec: SFT}
In this section, we start proving our main results. We begin providing the reader with the proof of Theorem \ref{thm: SFT}. Specifically, we shall see that any suspension of a shift of finite type has entropy flexibility. We will first need to recall some preliminary concepts and results to achieve our goal.

\subsection{Preliminary Results}
The following two results are technical lemmas that will help us construct measures that satisfy reasonable estimates.

\begin{thm}[\cite{LiOp}]\label{thm:LO}
Suppose that $f$  is transitive and has the shadowing
property. Then for every $f$-invariant measure $\mu$ on  and every $0\leq c\leq h_{\mu}(t)$, there exists a sequence of ergodic measures $\mu_n$ supported on almost $1$-$1$ extensions
of odometers such that $\lim\limits_{n\to\infty} \mu_n = \mu$ and $\lim\limits_{n\to \infty} h_{\mu_n}(f)=c$.
\end{thm}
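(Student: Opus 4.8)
The plan (following \cite{LiOp}) is to combine Katok's entropy-distribution estimate with a hierarchical ``odometer-type'' block construction, and then to use the shadowing property to realize that construction inside $M$ as a genuine subsystem of $f$. First I would reduce to the case $c<h_\mu(f)$ — if $c=h_\mu(f)$, run the argument with targets $c_n=h_\mu(f)-1/n$ and diagonalize — and to the case $\mu$ ergodic, by a standard approximation argument: transitivity together with shadowing makes the ergodic measures dense in $\mathcal M_f(M)$, and one may then imitate, in fixed proportions, finitely many ergodic pieces. For ergodic $\mu$ and $c<h_\mu(f)$, Katok's entropy formula yields, for every $\delta>0$ and all large $n$, a family $\mathcal F_{n,\delta}$ of orbit segments $x,f(x),\dots,f^{n-1}(x)$ which is $(n,\delta)$-separated, which has cardinality between $e^{(c-\delta)n}$ and $e^{cn}$ (obtained by trimming a Katok family of size at least $e^{(h_\mu(f)-\delta)n}$), and all of whose empirical measures $\frac1n\sum_{j<n}\delta_{f^j(x)}$ are $\delta$-close to $\mu$ in the metric $\mathfrak d$.

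Next I would fix a rapidly decreasing sequence $\epsilon_k\to 0$ with $\sum_k\epsilon_k$ small and, for each $k$, a shadowing constant $\delta_k$ matched to $\epsilon_k$, and build nested families of blocks. A level-$1$ block is a segment from $\mathcal F_{n_1,\delta_1}$ (code length $L_1=n_1$); a level-$(k+1)$ block is a concatenation $B^{(1)}B^{(2)}\cdots B^{(q_{k+1})}$ of level-$k$ blocks in which only a prescribed subset of the slots are ``free'' (each free slot ranging over a fixed admissible sub-family of level-$k$ blocks, the other slots filled by one fixed level-$k$ block), with the integers $q_{k+1}$ chosen large and $L_k\mid L_{k+1}:=q_{k+1}L_k$. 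Concatenation is performed by short $\delta_k$-jumps, legitimate because the relevant endpoints lie in $M$ and $\delta_k$-pseudo-orbits are allowed; this is where shadowing enters. I would tune the number of free slots and the sizes of the admissible sub-families so that $\frac1{L_{k+1}}\log(\#\{\text{level-}(k+1)\text{ blocks}\})\to c$. The set $P$ of all bi-infinite, block-aligned concatenations then consists of $\delta_k$-pseudo-orbits at every scale $k$; by shadowing each $p\in P$ is $\epsilon_k$-shadowed by some $y_p\in M$, and I let $\Lambda$ be the orbit closure of a suitably chosen such shadowing point. The block-alignment data (to which level-$k$ block a coordinate belongs, for every $k$) defines a continuous factor map $\pi\colon\Lambda\to\Omega:=\varprojlim\mathbb Z/L_k\mathbb Z$ onto the associated odometer; unique decipherability of the admissible concatenations at each level (which I would build in, cf.\ Theorem~\ref{thm: Ai}) makes $\pi$ well defined, and a now-standard argument shows $\pi$ is almost $1$-$1$ and that $\Lambda$ is minimal.

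It then remains to identify the measure and read off the two limits. Since every level-$k$ block is built from $\mu$-generic segments glued by short jumps, the blocks are uniformly $O(\epsilon_k)$-generic for $\mu$; hence Birkhoff averages over $\Lambda$ converge uniformly, so $f|_\Lambda$ is uniquely ergodic, and with minimality $\Lambda$ is strictly ergodic with unique measure $\nu$. The same uniform genericity gives $\mathfrak d(\nu,\mu)=O(\sum_j\epsilon_j)$, which we have arranged to be small. A matching pair of counting estimates — a lower bound from the $(n,\delta_k)$-separation of level-$k$ blocks, an upper bound from the fact that a point of $\Lambda$ restricted to a block-aligned window of length $\approx L_k$ is essentially determined by the sequence of level-$k$ blocks it crosses — gives $h_{top}(f|_\Lambda)=\lim_k\frac1{L_k}\log(\#\{\text{level-}k\text{ blocks}\})$, which by construction is within $\delta$ of $c$, and $h_\nu(f)=h_{top}(f|_\Lambda)$ by unique ergodicity. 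Letting the initial data shrink ($\delta=1/n\to 0$, $\sum_k\epsilon_k\to 0$) produces the required sequence $\mu_n:=\nu_n$, each supported on an almost $1$-$1$ extension of an odometer, with $\mu_n\to\mu$ and $h_{\mu_n}(f)\to c$.

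The hard part is the triple bookkeeping inside the construction: the number and sizes of the free slots at each level must be tuned so that the block-growth rate converges exactly to the prescribed $c$, yet the concatenation rules must stay uniquely decipherable and sparse enough that the odometer factor is almost $1$-$1$ and the system uniquely ergodic, and simultaneously the blocks must remain $\mu$-generic so the limit measure does not drift away from $\mu$. Reconciling the ``freedom'' needed to manufacture entropy exactly $c$ with the ``rigidity'' needed for the odometer structure — while using shadowing to turn the combinatorial object into an honest subsystem of $f$ without leaking any extra entropy — is the technical heart of the argument; once the scales $\epsilon_k,\delta_k,n_k,q_k$ are chosen in the right order, the rest is routine.
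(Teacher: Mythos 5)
This theorem is imported from \cite{LiOp}; the present paper states it without proof, so there is no internal argument to compare against. Your sketch does follow the actual strategy of \cite{LiOp} (and, in its combinatorial core, the strategy the paper re-implements in proving Theorem~\ref{thm: reduction2}): Katok-type separated families with controlled empirical measures (cf.\ Lemma~\ref{lemma2}), a hierarchical block construction whose growth rate is tuned to $c$, shadowing to realize the pseudo-orbits as genuine orbits, and unique ergodicity plus an odometer factor read off from the block alignment. So the architecture is the right one.

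Two steps, however, are stated in a way that would fail as written. First, the concatenation step: gluing two orbit segments end-to-start is \emph{not} automatically a $\delta_k$-pseudo-orbit --- the jump $d\bigl(f(\text{last point of }B^{(i)}),\ \text{first point of }B^{(i+1)}\bigr)$ can be of the order of $\diam(M)$, and ``the relevant endpoints lie in $M$'' does not make it small. One must either use transitivity to insert bounded-length connecting orbit segments between consecutive blocks, or pre-select (by pigeonhole, exactly as in Claim~\ref{claim:1} of this paper) the separated family so that all segments begin in one small cylinder and end in another, and then bridge with a fixed connecting orbit of bounded length. Your only invocation of transitivity is in the reduction to ergodic $\mu$; its essential role is here, and omitting it leaves the pseudo-orbits undefined. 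Second, the entropy upper bound for $\Lambda$: without expansivity the shadowing point of a pseudo-orbit is not unique, and the orbit closure may contain points that are not of the form $y_p$, so ``a point of $\Lambda$ is essentially determined by the blocks it crosses'' needs an argument. The standard repair is to bound $(n,\epsilon)$-separated subsets of $\Lambda$ by the number of admissible block patterns, using that every point of $\Lambda$ is $\epsilon_k$-traced by some admissible concatenation with $\epsilon_k$ much smaller than the separation constant of the blocks. With these two repairs your outline matches the proof in \cite{LiOp}.
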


\begin{lemma}[\cite{LiOp}]\label{lem:measure-approx}
Let $(M,d)$ be a compact metric space and $\eps>0$.
\begin{enumerate}
  \item \label{enum:measure-approx-1}
  For any sequence $(x_i)_{i=0}^\infty$ of points in $M$ and any nonempty finite subsets $A,B$ of $\N_0$, we have
$$
   \mathfrak{d}\Bigl( \frac{1}{|A|}\sum_{i\in A} \delta_{x_i}, \frac{1}{|B|}\sum_{i\in B}\delta_{x_i}\Bigr)
  \leq \frac{|A|+|B|}{|A|\cdot|B|}|A\Delta B| + \frac{\bigl||A|-|B|\bigr|}{|A|\cdot|B|}|A\cap B|.
$$
  \item \label{enum:measure-approx-2}
  For two sequences $(x_i)_{i=0}^{m-1}$ and $(y_i)_{i=0}^{m-1}$ of points in $M$,
    if $d(x_i,y_i)<\eps$ for $i=0,1,\dotsc,m-1$, then
$$
  \mathfrak{d}\Bigl( \frac{1}{m}\sum_{i=0}^{m-1} \delta_{x_i},
        \frac{1}{m}\sum_{i=0}^{m-1} \delta_{y_i}\Bigr)<\eps.
$$
 \item\label{enum:measure-approx-3}
 If $\mu_i,\mu\in \SM(M)$ are such that $\mathfrak{d}(\mu_i,\mu)<\eps$ for $i=1,\ldots, k$,
 then for any choice of $\alpha_i\in [0,1]$
 with $\sum_{i=1}^k\alpha_i=1$ we have
$$
     \mathfrak{d}\Bigl(\sum_{i=1}^K \alpha_i \mu_i,\mu\Bigr)<\eps.
$$
\end{enumerate}
\end{lemma}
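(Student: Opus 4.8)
The plan is to prove all three items by unwinding the definition $\mathfrak{d}(\mu,\nu)=\sum_{n\ge 1}2^{-n}\bigl|\int\phi_n\,d\mu-\int\phi_n\,d\nu\bigr|$ and using only that each reference function satisfies $\|\phi_n\|_{BL}=\|\phi_n\|_\infty+\|\phi_n\|_L\le 1$, hence $\|\phi_n\|_\infty\le 1$ and $\|\phi_n\|_L\le 1$. The common observation is that $\mathfrak{d}$ is a $2^{-n}$-weighted average of the linear functionals $\mu\mapsto\int\phi_n\,d\mu$, and $\sum_{n\ge 1}2^{-n}=1$; consequently any estimate for $\bigl|\int\phi_n\,d\mu-\int\phi_n\,d\nu\bigr|$ that is uniform in $n$ transfers verbatim to $\mathfrak{d}(\mu,\nu)$ after summing the geometric series.

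For \eqref{enum:measure-approx-2}, writing $\mu=\frac1m\sum_{i}\delta_{x_i}$ and $\nu=\frac1m\sum_i\delta_{y_i}$ we have $\int\phi_n\,d\mu-\int\phi_n\,d\nu=\frac1m\sum_{i=0}^{m-1}\bigl(\phi_n(x_i)-\phi_n(y_i)\bigr)$, and $\|\phi_n\|_L\le 1$ together with the hypothesis $d(x_i,y_i)<\eps$ gives $\bigl|\int\phi_n\,d\mu-\int\phi_n\,d\nu\bigr|<\eps$ for every $n$; summing the weighted average yields $\mathfrak{d}(\mu,\nu)<\eps$. For \eqref{enum:measure-approx-3}, linearity of the integral gives $\int\phi_n\,d\bigl(\sum_i\alpha_i\mu_i\bigr)-\int\phi_n\,d\mu=\sum_i\alpha_i\bigl(\int\phi_n\,d\mu_i-\int\phi_n\,d\mu\bigr)$, so the triangle inequality and $\alpha_i\ge 0$ bound the left side by $\sum_i\alpha_i\bigl|\int\phi_n\,d\mu_i-\int\phi_n\,d\mu\bigr|$; multiplying by $2^{-n}$, summing over $n$, and interchanging the two absolutely convergent sums gives $\mathfrak{d}\bigl(\sum_i\alpha_i\mu_i,\mu\bigr)\le\sum_i\alpha_i\,\mathfrak{d}(\mu_i,\mu)<\eps\sum_i\alpha_i=\eps$ (here the index bound $K$ in the statement should read $k$).

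Item \eqref{enum:measure-approx-1} is the only one requiring a little bookkeeping, and is where I expect the (mild) main difficulty to lie, namely in reproducing the exact constants. Set $\mu=\frac1{|A|}\sum_{i\in A}\delta_{x_i}$ and $\nu=\frac1{|B|}\sum_{i\in B}\delta_{x_i}$, and split $A=(A\setminus B)\sqcup(A\cap B)$, $B=(B\setminus A)\sqcup(A\cap B)$. Separating the terms indexed by $A\cap B$ (which occur in both sums, but with different weights), for every $n$
$$\int\phi_n\,d\mu-\int\phi_n\,d\nu=\frac1{|A|}\sum_{i\in A\setminus B}\phi_n(x_i)-\frac1{|B|}\sum_{i\in B\setminus A}\phi_n(x_i)+\Bigl(\frac1{|A|}-\frac1{|B|}\Bigr)\sum_{i\in A\cap B}\phi_n(x_i).$$
Bounding each $|\phi_n(x_i)|$ by $\|\phi_n\|_\infty\le 1$, using $|A\setminus B|\le|A\Delta B|$, $|B\setminus A|\le|A\Delta B|$, $\frac1{|A|}+\frac1{|B|}=\frac{|A|+|B|}{|A|\cdot|B|}$ and $\bigl|\frac1{|A|}-\frac1{|B|}\bigr|=\frac{\bigl||A|-|B|\bigr|}{|A|\cdot|B|}$, we obtain
$$\Bigl|\int\phi_n\,d\mu-\int\phi_n\,d\nu\Bigr|\le\frac{|A|+|B|}{|A|\cdot|B|}|A\Delta B|+\frac{\bigl||A|-|B|\bigr|}{|A|\cdot|B|}|A\cap B|,$$
which is independent of $n$; multiplying by $2^{-n}$ and summing over $n\ge 1$ bounds $\mathfrak{d}(\mu,\nu)$ by the same quantity, which is exactly the claimed inequality. (Equivalently, one notes $\mathfrak{d}(\mu,\nu)\le|\mu-\nu|(M)$, the total variation of the signed measure $\mu-\nu$, and computes the latter directly from the same decomposition.)
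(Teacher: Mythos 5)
Your proposal is correct. The paper does not prove this lemma at all --- it is imported verbatim from \cite{LiOp} --- and your direct verification from the definition $\mathfrak{d}(\mu,\nu)=\sum_{n\geq 1}2^{-n}\bigl|\int\phi_n\,d\mu-\int\phi_n\,d\nu\bigr|$ with $\|\phi_n\|_{BL}\leq 1$ is exactly the standard argument one would find in the cited source: a uniform-in-$n$ bound on each functional, summed against the weights $2^{-n}$ with total mass $1$. The only point worth polishing is the strict inequality in item \eqref{enum:measure-approx-2}: an infinite sum of terms each strictly below $\eps 2^{-n}$ only gives $\leq\eps$ a priori, but since there are finitely many points you can set $\eps'=\max_i d(x_i,y_i)<\eps$ and bound every term by $\eps' 2^{-n}$, so the sum is $\leq\eps'<\eps$.
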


\begin{lemma}[\cite{LiOp}]\label{lemma2}
    Let $\mu\in \SM^e_f(M)$. For every $\kappa>0$, there is $\eps>0$ such that for every neighborhood $\SU\subset \SM(M)$ of $\mu$, there is $N>0$ such that if $n>N$, there is an $n$-$\eps$-separated set $\Gamma_n\subset  \su(\mu)$ satisfying:
\begin{enumerate}
    \item  $\SE_n(x)\subset \SU$, for every $x\in \Gamma_n$.
    \item If $h_{\mu}(f)<\infty$, then 
    $$
    \left|\frac{1}{n}\log(\#\Gamma_n)-h_{\mu}(f)   \right|<\kappa.
    $$
    \item  If $h_{\mu}(f)=\infty$, then 
    $$
    \frac{1}{n}\log(\#\Gamma_n)> \frac{1}{\kappa}.
    $$
\end{enumerate}
\end{lemma}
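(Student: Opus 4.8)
The plan is to recognize this statement as a quantitative form of Katok's entropy formula for the ergodic measure $\mu$, refined so that the separated set lies inside $\su(\mu)$ and so that the empirical measures $\SE_n(x)$ of its points fall in the prescribed neighborhood $\SU$. I would first reduce the bookkeeping: the case $h_\mu(f)=\infty$ is handled exactly like the finite case, except that the role of $h_\mu(f)-\kappa$ is played by an arbitrarily large finite number realized by a finite partition, and only the lower estimate on $\#\Gamma_n$ is required.

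The construction rests on two finite partitions with $\mu$-null boundaries, whose existence is standard (cover the compact set $\su(\mu)$ by finitely many balls whose bounding spheres are $\mu$-null and take the induced partition). I would take $\mathcal P$ coarse, but with $h_\mu(f,\mathcal P)>h_\mu(f)-\kappa/4$ (respectively $>1/\kappa+2$ when $h_\mu(f)=\infty$), and, \emph{after} $\eps$ has been fixed, an auxiliary fine partition $\mathcal R$ with $\operatorname{diam}\mathcal R<\eps/2$. The number $\eps>0$, which depends only on $\kappa$ through $\mathcal P$ and $\mu$, is chosen small enough that $\int\log N_\eps\,d\mu<\kappa/8$, where $N_\eps(z)=\#\{P\in\mathcal P:\,P\cap B(z,\eps)\neq\emptyset\}$; this is possible because $N_\eps$ is bounded by $\#\mathcal P$ and $N_\eps(z)\to 1$ as $\eps\to 0$ for every $z\notin\partial\mathcal P$, so dominated convergence together with $\mu(\partial\mathcal P)=0$ forces $\int\log N_\eps\,d\mu\to 0$. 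Now, given $\SU$, I would apply Birkhoff's ergodic theorem (to finitely many continuous functions generating a sub-neighborhood of $\SU$ inside the metric $\mathfrak d$, and to the function $\log N_\eps$), the Shannon--McMillan--Breiman theorem (for $\mathcal P$ and for $\mathcal R$), and Egorov's theorem, to produce a set $G\subset\su(\mu)$ with $\mu(G)>0$ and an integer $N$ so that for every $n\geq N$ and every $x\in G$, writing $B_n(x,\eps)=\{y:\,d(f^jx,f^jy)\le\eps,\ 0\le j<n\}$: (a) $\SE_n(x)\in\SU$; (b) $\mu(\mathcal P_n(x))\leq e^{-n(h_\mu(f,\mathcal P)-\kappa/8)}$; (c) $\frac1n\sum_{j=0}^{n-1}\log N_\eps(f^jx)<\kappa/4$; (d) $\mu(\mathcal R_n(x))\geq e^{-n(h_\mu(f)+\kappa/4)}$.

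I would then let $\Gamma_n$ be a maximal $(n,\eps)$-separated subset of $G$; it satisfies $\Gamma_n\subset\su(\mu)$, and item (1) of the statement is immediate from (a). For the lower bound, maximality makes $\Gamma_n$ an $(n,\eps)$-spanning set for $G$. The atoms of $\mathcal P_n$ meeting $G$ cover $G$, and by (b) each has $\mu$-measure at most $e^{-n(h_\mu(f,\mathcal P)-\kappa/8)}$, so there are at least $\mu(G)\,e^{n(h_\mu(f,\mathcal P)-\kappa/8)}$ of them; on the other hand, every such atom meets some $B_n(x,\eps)$ with $x\in\Gamma_n$, and a single $B_n(x,\eps)$ meets at most $\prod_{j<n}N_\eps(f^jx)\leq e^{n\kappa/4}$ atoms of $\mathcal P_n$ (an atom $\bigcap_{j<n}f^{-j}P_{i_j}$ meeting $B_n(x,\eps)$ forces $P_{i_j}\cap B(f^jx,\eps)\neq\emptyset$ for each $j$, using (c)). Comparing the two counts and letting $n$ grow gives $\frac1n\log\#\Gamma_n>h_\mu(f)-\kappa$ in the finite case, and $>1/\kappa$ in the infinite case; this is item (3) and the lower half of item (2). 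For the upper half of item (2), the sets $B_n(x,\eps/2)$, $x\in\Gamma_n$, are pairwise disjoint because $\Gamma_n$ is $(n,\eps)$-separated, and each contains the atom $\mathcal R_n(x)$ (since $\operatorname{diam}\mathcal R<\eps/2$), which by (d) has measure at least $e^{-n(h_\mu(f)+\kappa/4)}$; summing over $\Gamma_n$ yields $\#\Gamma_n\leq e^{n(h_\mu(f)+\kappa/4)}$, i.e. $\frac1n\log\#\Gamma_n<h_\mu(f)+\kappa$.

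The genuinely non-formal point — and where I expect the main difficulty — is the estimate on how many atoms of $\mathcal P_n$ a dynamical $\eps$-ball can meet, i.e. controlling $\int\log N_\eps\,d\mu$; this is precisely where the $\mu$-null boundary of $\mathcal P$ enters and is the technical core of Katok's formula. The remaining ingredients are routine: simultaneously arranging (a)--(d) on a single positive-measure good set $G$ via Birkhoff, Shannon--McMillan--Breiman and Egorov, passing between separated and spanning sets, and using the auxiliary fine partition $\mathcal R$ only to obtain the matching upper bound in item (2).
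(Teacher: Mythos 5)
The paper does not prove this lemma at all --- it is quoted verbatim from \cite{LiOp} --- and your argument is a correct, self-contained proof along exactly the standard Katok-entropy-formula lines used in that reference: Shannon--McMillan--Breiman, Birkhoff and Egorov to produce the good set $G$, a maximal $(n,\eps)$-separated subset of $G$, the comparison between atoms of $\mathcal{P}_n$ and dynamical $\eps$-balls controlled via $\int \log N_\eps\, d\mu$ (which is precisely where the $\mu$-null boundary of $\mathcal{P}$ is needed), and the auxiliary fine partition for the matching upper bound. The only delicate point is the quantifier order --- $\eps$ must be fixed before $\SU$ --- and your write-up already respects this, since $\eps$ depends only on $\kappa$ through $\mathcal{P}$ and $\mu$.
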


In addition to the previous lemmas, we will need the following result, which is a consequence of \cite{Tal}*{Lemma 4.1.2}.

\begin{lemma}\label{thm:specialword}
    Let $\Si$ be a sub-shift of topological entropy $h>0$. Then 
     there exists $L$ such that for every $l\geq L$   
    there exists  
    $w^l=w_0w_1...w_l\in \SL(\Sigma)$ which satisfies
    $$
    \max (\{0< k<l : w_{l-k} ...w_{l-1} = w_0 ...w_{k-1}\} \cup \{0\})<l/4.
    $$
\end{lemma}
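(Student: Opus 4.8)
Write $p(n)=\#\SL_n(\Si)$ for the complexity function of $\Si$. Since the concatenation of two admissible words is admissible, $p$ is submultiplicative, so $\tfrac1n\log p(n)\to h:=\inf_m\tfrac1m\log p(m)=h_{top}(\si|_\Si)$; in particular $p(n)\ge e^{hn}$ for every $n\ge 1$, and for each $\eps>0$ there is $J=J(\eps)$ with $p(n)\le e^{(h+\eps)n}$ whenever $n\ge J$. The plan is a counting argument: I will show that for all large $l$, strictly fewer than $p(l)$ of the words in $\SL_l(\Si)$ admit a ``long border'', meaning a proper prefix coinciding with the suffix of the same length $k\ge l/4$; hence some word $u\in\SL_l(\Si)$ has no such border. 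Since $\Si$ is a two-sided subshift, $u=u_0\cdots u_{l-1}$ extends to the right to some $u_0\cdots u_{l-1}a\in\SL_{l+1}(\Si)$, and taking $w^l=u_0\cdots u_{l-1}a$ yields the statement, because the maximum in the lemma involves only $u_0,\dots,u_{l-1}$.

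Call $u\in\SL_l(\Si)$ \emph{bad} if $u_0\cdots u_{k-1}=u_{l-k}\cdots u_{l-1}$ for some integer $k$ with $l/4\le k<l$. Fix such a $k$. If $k\le l/2$, the index blocks $[0,k-1]$ and $[l-k,l-1]$ are disjoint, and the equality forces the last $k$ letters of $u$ to repeat its first $k$ letters; thus $u$ is determined by the word $u_0\cdots u_{l-k-1}\in\SL_{l-k}(\Si)$. If $k>l/2$, the equality forces $u_i=u_{i+(l-k)}$ for every admissible $i$, so $u$ is periodic of period $l-k<l/2$ and again determined by $u_0\cdots u_{l-k-1}$. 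In both regimes the number of bad words realizing this particular $k$ is at most $p(l-k)$. Summing over $k\in\{\lceil l/4\rceil,\dots,l-1\}$ and substituting $j=l-k$, the number of bad words in $\SL_l(\Si)$ is at most $\sum_{j=1}^{\lfloor 3l/4\rfloor}p(j)$.

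Now fix $\eps=h/4$, so that $\tfrac34(h+\eps)=\tfrac{15}{16}h<h$, and let $J=J(\eps)$. For every $l$ with $\lfloor 3l/4\rfloor\ge J$,
\[
\sum_{j=1}^{\lfloor 3l/4\rfloor}p(j)\ \le\ C_J+\tfrac{3l}{4}\,e^{\frac34(h+\eps)l}\ =\ C_J+\tfrac{3l}{4}\,e^{\frac{15}{16}hl},\qquad C_J:=\sum_{j=1}^{J-1}p(j),
\]
where $C_J$ depends only on $\Si$. Since $\tfrac{3l}{4}e^{\frac{15}{16}hl}=o(e^{hl})$ and $C_J$ is a fixed constant, there is $L$, depending only on $h$, $J$ and $C_J$ (hence only on $\Si$), such that for all $l\ge L$ the right-hand side is $<e^{hl}\le p(l)$. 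Consequently, for each $l\ge L$ there exists $u\in\SL_l(\Si)$ with no border of length $\ge l/4$, and the extension described above turns this into the required $w^l$.

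The delicate point is the final estimate. The trivial bound $p(j)\le d^j$, with $d$ the number of symbols, is far too weak: if $h$ is much smaller than $\log d$, then $d^{3l/4}$ dwarfs $p(l)\ge e^{hl}$ and the argument collapses. What makes it work is that $\tfrac1n\log p(n)\to h$ \emph{from above}, so $p(j)\le e^{(h+\eps)j}$ for large $j$; the strict inequality $\tfrac34(h+\eps)<h$ then makes bad words exponentially rare, and this is precisely the role played by the constant $1/4$ (equivalently, by $3l/4<l$) in the statement. The only other care needed is the clean case split $k\le l/2$ versus $k>l/2$ in the combinatorial count and the verification that $L$ may be chosen depending on $\Si$ alone.
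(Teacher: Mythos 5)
Your proof is correct, but it is worth noting that the paper does not actually prove this lemma: it simply quotes it as a consequence of Lemma 4.1.2 of \cite{Tal}. So what you have produced is a self-contained, elementary replacement for that citation, and it is a clean one. The counting is right: for a fixed border length $k$ with $l/4\le k<l$, a word with that border is determined by its prefix of length $l-k$ (in the overlapping regime $k>l/2$ because the border forces $(l-k)$-periodicity, in the disjoint regime because the suffix is a copy of a sub-block of the prefix), so the bad words number at most $\sum_{j\le 3l/4}p(j)$, and the choice $\eps=h/4$ makes $\tfrac34(h+\eps)<h\le\tfrac1l\log p(l)$, which kills the bad set for large $l$. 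The extension by one symbol to pass from a borderless word of length $l$ to the $(l+1)$-letter word $w^l$ required by the statement is also handled correctly, since the overlap condition only involves the first $l$ letters.

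One small inaccuracy in the write-up: you justify submultiplicativity of $p$ by saying that ``the concatenation of two admissible words is admissible.'' That is false in a general subshift, and in any case it is the wrong direction --- concatenability would give supermultiplicativity. The correct (and standard) reason for $p(m+n)\le p(m)p(n)$ is that every admissible word of length $m+n$ \emph{restricts} to an admissible word of length $m$ followed by an admissible word of length $n$, so $\SL_{m+n}(\Si)$ injects into $\SL_m(\Si)\times\SL_n(\Si)$. The conclusion you draw from Fekete's lemma (convergence to $h=\inf_m\tfrac1m\log p(m)$, hence $p(n)\ge e^{hn}$ for all $n$ and $p(n)\le e^{(h+\eps)n}$ for large $n$) is exactly what the argument needs, so this is a slip in the justification rather than a gap in the proof.
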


\subsection{Proof of Theorem \ref{thm: SFT}} Now we are in position to prove Theorem \ref{thm: SFT}. Hereafter, let   
$ (\Si,\si)$ denote a shift of finite type and let $\rho:\Si\to \R^+$ a roof function. Let $\phi$ be the suspesion flow of $\si$ with roof $\rho$. Recall that  
there is a bijection  $\Psi:  \SM_\sigma(\Si)\to \SM_\phi(
 \overline{\Si})$ assigning to any invariat measure $\mu$ of $\si$, an invariant measure $\Psi(\mu)=\hat\mu$ for $\psi$. In addition, this bejection also relates the entropies of $\mu$ and $\hat \mu$ through the Abramov's formula: $$h_{\hat \mu}(\phi)=\frac{h_\mu(\sigma)}{\int\rho d\mu}.$$
With all these observations, the proof of Theorem 
\ref{thm: SFT} is reduced to prove the following Theorem:

\begin{thm}\label{thm: reduction}
Let $\mu$ be any invariant measure on $\Sigma$ with $h_\mu(\sigma)>0$,  let $\rho\colon \Si\to \R^+$ be a continuous function  and denote $h^*=h_\mu(\sigma)/\int \rho d\mu$. For every $c\in (0,h^*)$ and every $\eps>0$ there exists $\nu\in M_\sigma(\Sigma)$ such that:
\begin{enumerate}
    \item $h_\nu(\sigma)= c \int \rho d\nu$,
    \item  $(Y,\sigma)$ is strictly ergodic, where $Y=\supp \nu$. 
\end{enumerate}
\end{thm}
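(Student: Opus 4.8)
## Proof Proposal for Theorem~\ref{thm: reduction}

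\textbf{The plan is to} construct $\nu$ as the unique invariant measure of a carefully built minimal subshift $Y\subset\Sigma$, obtained by a coding (renewal-type) construction over a long block with the non-self-overlapping property supplied by Lemma~\ref{thm:specialword}. The guiding principle is a two-scale interpolation: start from an ergodic measure $\mu'$ on $\Sigma$ with entropy close to $h_\mu(\sigma)$ (so that $h_{\mu'}(\sigma)/\int\rho\,d\mu'$ is close to $h^*$, using that $c<h^*$), then ``dilute'' the dynamics by interleaving long, essentially constant-frequency stretches of a fixed periodic word (a marker word $w^l$ from Lemma~\ref{thm:specialword}) with stretches of rich dynamics drawn from an $n$-$\eps$-separated set produced by Lemma~\ref{lemma2}. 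Diluting lowers the entropy per unit of symbolic time, but it also changes $\int\rho\,d\nu$; the point is to choose the dilution ratio so that, in the end, $h_\nu(\sigma)=c\int\rho\,d\nu$ \emph{exactly}. Because $h_\nu(\sigma)$ and $\int\rho\,d\nu$ both vary continuously with the dilution parameter (from the ``rich'' extreme, where the ratio exceeds $c$, down to the pure-periodic extreme, where $h_\nu=0<c\int\rho\,d\nu$ since $\rho>0$), an intermediate-value argument pins down the correct ratio.

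\textbf{Concretely, the key steps in order:}
\emph{(i)} Fix $\mu'\in\SM^e_\sigma(\Sigma)$ with $h_{\mu'}(\sigma)>0$ and $h_{\mu'}(\sigma)/\int\rho\,d\mu'>c$; such $\mu'$ exists since the function $\mu\mapsto h_\mu(\sigma)/\int\rho\,d\mu$ has supremum $\geq h^*>c$ and ergodic measures are dense enough (variational principle plus upper semicontinuity arguments on the SFT).
\emph{(ii)} Apply Lemma~\ref{lemma2} to $\mu'$ with a small $\kappa$ to extract, for large $n$, an $n$-$\eps$-separated set $\Gamma_n\subset\supp(\mu')$ all of whose empirical measures lie in a prescribed weak$^*$-neighborhood of $\mu'$, with $\tfrac1n\log\#\Gamma_n$ within $\kappa$ of $h_{\mu'}(\sigma)$. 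Replace each point of $\Gamma_n$ by its length-$n$ prefix word, obtaining a collection of words that can be freely concatenated while keeping empirical measures near $\mu'$ (this uses the shadowing/SFT structure to glue with bounded overhead, and Lemma~\ref{lem:measure-approx} to control the weak$^*$ drift).
\emph{(iii)} Fix a marker word $w^l$ from Lemma~\ref{thm:specialword} with $l$ large, and for a parameter $m\in\N$ build the renewal system $\SW_m$ whose ``rich'' generator is any concatenation of words from step (ii) of total length roughly $n$ and whose ``dilution'' generator is $(w^l)^m$ (the $m$-fold repetition), inserting one marker copy of $w^l$ between consecutive blocks to guarantee unique decipherability. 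By Theorem~\ref{thm: Ai}, the resulting renewal shift is an SFT; passing to a minimal subset of it and then, via a standard Denjoy/Boshernitzan-type rigidification (or by taking the subshift generated by a single suitably self-similar Sturmian-like scheduling of ``rich'' vs.\ ``dilution'' blocks), arrange that it is in fact minimal and uniquely ergodic — i.e.\ strictly ergodic — with the scheduling frequencies converging. Call the measure $\nu_m$ and $Y_m=\supp\nu_m$.
\emph{(iv)} Compute, as $m\to\infty$ and with $n,l$ fixed large, that $h_{\nu_m}(\sigma)\to 0$ while $\int\rho\,d\nu_m$ stays bounded away from $0$ (since $\rho>0$ and the marker word has positive $\rho$-average), whereas for $m$ small (pure rich blocks) $h_{\nu_m}(\sigma)/\int\rho\,d\nu_m$ is close to $h_{\mu'}(\sigma)/\int\rho\,d\mu'>c$. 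Both $h_{\nu_m}(\sigma)$ (via unique ergodicity and the block structure — entropy equals the growth rate of admissible words, computable from the scheduling) and $\int\rho\,d\nu_m$ depend continuously on the scheduling parameter; enlarging the parameter space to allow a continuum of scheduling frequencies (e.g.\ rotation-number parameter of a Sturmian scheduling), the intermediate value theorem yields a parameter for which $h_\nu(\sigma)=c\int\rho\,d\nu$ on the nose. That $\nu$ and $Y=\supp\nu$ satisfy both conclusions is then immediate.

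\textbf{The main obstacle} I expect is achieving conclusion (2) — strict ergodicity — \emph{simultaneously} with the exact entropy identity in (1). A free renewal/SFT construction is transitive but never minimal, so one must pass to a minimal, uniquely ergodic sub-system without losing control of $h_\nu(\sigma)$ and $\int\rho\,d\nu$. The non-self-overlapping marker from Lemma~\ref{thm:specialword} is exactly what makes block boundaries recognizable, which is the technical heart of simultaneously forcing unique decipherability (hence computable entropy) and building a minimal scheduling (so that markers recur with well-defined frequency and every word reappears syndetically). Getting the entropy to land on the precise value $c\int\rho\,d\nu$ — rather than merely close to it — forces the intermediate-value step to be carried out in a genuinely one-parameter family of strictly ergodic systems with jointly continuous $(h_\nu(\sigma),\int\rho\,d\nu)$; verifying this continuity, and that the relevant interval of ratios actually contains $c$ at one end and values below $c$ at the other, is where the real work lies. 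The reconciliation with Theorem~\ref{thm: SFT} is then purely formal via the Abramov formula and the bijection $\Psi$: $h_{\hat\nu}(\phi)=h_\nu(\sigma)/\int\rho\,d\nu=c$, and $(Y,\sigma)$ strictly ergodic forces $(\phi|_{\Gamma_c},\mu_c)$ strictly ergodic for $\Gamma_c$ the suspension of $Y$.
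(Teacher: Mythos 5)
The building blocks you identify --- Lemma~\ref{lemma2} for separated sets with controlled empirical measures, the non-self-overlapping marker from Lemma~\ref{thm:specialword} to force unique decipherability, and Theorem~\ref{thm: Ai} to recognize the resulting renewal system as an SFT --- are exactly the ones the paper uses, and your steps (ii)--(iii) essentially reproduce one inductive step of the paper's construction. The genuine gap is in how you pass from the resulting transitive, positive-entropy renewal SFT to a \emph{strictly ergodic} subsystem with the \emph{exact} entropy value. ``Passing to a minimal subset'' destroys all entropy control: a minimal subset of an SFT typically has much smaller (often zero) entropy and you have no handle on which value you land on. The alternative, a deterministic Sturmian-like scheduling of ``rich'' versus ``dilution'' blocks, does not work either: if the scheduling and the block contents are both deterministic the system has zero entropy, while if you retain free choice inside the rich blocks (which is where the entropy lives) the system is again neither minimal nor uniquely ergodic. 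The intermediate-value step compounds the problem: entropy is only upper semicontinuous on $\SM_\sigma(\Sigma)$, and you give no argument that $h_{\nu_m}(\sigma)$ varies continuously along a one-parameter family of strictly ergodic subshifts --- this is precisely the point where ``the real work lies,'' as you say, and it is left unresolved.

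The paper resolves both difficulties at once by \emph{iterating} your steps (ii)--(iii) rather than performing them once: Theorem~\ref{thm: reduction2} produces a nested sequence of SFTs $\Sigma=X_0\supset X_1\supset X_2\supset\cdots$ such that (a) all invariant measures of $X_n$ lie within $2\kappa_n\to 0$ of the maximal-entropy measure $\nu_n$, so $X=\bigcap_n X_n$ is uniquely ergodic; (b) every word of length $s_n$ in $X_n$ contains every shorter marked word as a subword, which forces minimality of $X$; and (c) $h_{top}(X_{n+1})$ is squeezed between $(1+\delta_{n+1})^2c\int\rho\,d\nu_n$ and $(1+3\delta_{n+1})c\int\rho\,d\nu_n$ with matching two-sided control on $\int\rho\,d\eta$ for $\eta\in\SM_\sigma(X_{n+1})$. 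Letting $\delta_n\to 0$ and using only \emph{upper semicontinuity} of $\eta\mapsto h_\eta(\sigma)$ (available since subshifts are expansive) yields $h_\nu(\sigma)=c\int\rho\,d\nu$ exactly in the limit --- no intermediate value theorem and no continuity of entropy is required. You would need to replace your steps (iii)--(iv) by such a nested squeeze to close the argument.
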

Let us first assume that Theorem \ref{thm: reduction} holds and use it to prove Theorem \ref{thm: SFT}
\begin{proof}[Proof of Theorem \ref{thm: SFT}]
Let $\phi:\R\times 
 \overline{\Si}\to 
 \overline{\Si}$  be a suspension flow over a shift of finite type  $(\Si,\si)$ 
 and let $\rho\colon 
\Si\to \R_+$ be the roof function defining $\phi$. 
By the variational principle, there is a sequence of measures $\hat\mu_n$ so that $h_{\mu_n}(\phi)\to h_{top}(\phi)$. Fix such a measure $\hat\mu_n$ and let $\mu_n=\Psi^{-1}(\hat\mu)$. Fix $c\in [0,\frac{h_{\mu_n}(\si)}{\int\rho d\mu_n})$.  By Theorem \ref{thm: reduction}, there is a $\si$-invariant  measure $\nu$ so that $\nu$ is a measure of maximal entropy on $X=\supp(\nu)$ and $h_{\nu}(\si)=c\int\rho d\nu$,  $X$ is minimal and $(X,\sigma)$ is strictly ergodic. By Abramov's formula we have $$h_{\hat\nu}(\phi)=\frac{h_\nu(\si
)}{\int\rho d\nu}=c.$$ 
Moreover, since 
 $(X,\si)$ is uniquely ergodic,
we get $h_{top}(\phi|_{\hat X})=h_{\hat\nu}(\phi)=c$ and the proof is finished. 
\end{proof}

The proof of Theorem \ref{thm: reduction} is quite technical and long. For this reason, we decided to break the proof into parts and extract technical results from the proof that will make the reading easier and also, can be of independent interest for further applications. We begin with the following result, which will give us the proof Theorem \ref{thm: reduction} as a corollary.

\begin{thm}\label{thm: reduction2}
Let $\si$ be a transitive shift of finite type, let $\rho\colon \Si\to \R^+$ be a continuous function and $\mu$ an invariant measure for $\si$ with $h_\mu(\si)>1$.  Denote $h^*=h_\mu(\sigma)/\int \rho d\mu$. For every $c\in [0,h^*)$,  there exist 
 sequences of positive numbers $\de_1>\de_2>\de_3>\cdots$,   $\ka_1>\ka_2>\ka_3>\cdots$ decreasing to zero and a sequence $\Si=X_0\supset X_1\supset X_2\supset X_3\supset\cdots$ of subshifts of finite type  such that, for every $n\geq 0$, it holds:

$$
(1-\delta_{n+1})\int \rho d\nu_n < \int \rho d\eta < (1+\delta_{n+1})\int \rho d\nu_n 
$$
 and $\mathfrak{d}(\eta,\nu_n)\leq 2\ka_n$ for every measure $\eta\in \SM_\si(X_n)$ and

$$
(1+\delta_{n+1})^2 c\int \rho d\nu_n \leq 
h_{top}(X_{n+1})
\leq
(1+3\delta_{n+1})c\int \rho d\nu_n,
$$
where $\nu_0=\mu$, and $\nu_n$ is  defined as the measure of maximal entropy of $X_n$ 
for every $n\geq 1$.

Furthermore, there exists an increasing sequence $s_n$ such that $\mathcal{L}_{s_n}(X_i)=\mathcal{L}_{s_n}(X_n)$ for every $i\geq n$
and for every words $w\in \mathcal{L}_{s_n}(X_n)$, $v\in \mathcal{L}_{s_j}(X_n)$, $j<n$, $v$ is a subword of $w$.
\end{thm}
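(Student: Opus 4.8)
The plan is to build the nested sequence $X_0\supset X_1\supset X_2\supset\cdots$ of subshifts of finite type inductively, using at each step a renewal-system construction controlled by the shadowing property of $\sigma$. Fix the target value $c\in[0,h^*)$ and set $X_0=\Sigma$, $\nu_0=\mu$. Suppose $X_n$ and $\nu_n$ (the measure of maximal entropy on $X_n$ for $n\ge 1$) have been constructed, together with $\delta_n,\kappa_n$ and the number $s_n$. To produce $X_{n+1}$, I would first choose $\delta_{n+1}<\delta_n$ and $\kappa_{n+1}<\kappa_n$ small enough that all the quantitative inequalities claimed in the statement have room to hold, and then apply the toolbox of Section~\ref{sec: SFT}: Theorem~\ref{thm:LO} (or Lemma~\ref{lemma2}) to the transitive SFT $\sigma|_{X_n}$ and the measure $\nu_n$ furnishes, for every scale, a large $n$-$\eps$-separated set $\Gamma$ inside $\supp(\nu_n)$ whose empirical measures are all within $\kappa_{n+1}$ of $\nu_n$ and whose exponential growth rate is within $\delta_{n+1}$ of $h_{top}(X_n)=h_{\nu_n}(\sigma)$. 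I then extract from $\Gamma$ a carefully chosen sub-collection $\mathcal{W}$ of words of a common large length $N$: the number of words kept is tuned so that $\frac{1}{N}\log\#\mathcal{W}$ lands in the window $\bigl[(1+\delta_{n+1})^2 c\int\rho\,d\nu_n,\ (1+3\delta_{n+1})c\int\rho\,d\nu_n\bigr]$ — this is possible because $c\int\rho\,d\nu_n<h^*\int\rho\,d\nu_n$ is (up to the small factors) bounded by $h_{top}(X_n)$, so there are enough words available, while discarding words only shrinks the count continuously.

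The next step is to make the free concatenations of $\mathcal{W}$ into a genuine SFT that sits inside $X_n$. Concatenating words of $\Gamma$ need not stay in $X_n$, so here is where shadowing enters: by choosing $N$ large relative to the $\delta$-$\eps$ modulus of the shadowing property, every bi-infinite free concatenation of words in $\mathcal{W}$ is a $\delta$-pseudo-orbit of $\sigma$, hence is $\eps$-shadowed by an actual point of $X_n$; passing to the shadowing points (and using expansiveness of the SFT to pin them down) yields a subshift $X_{n+1}\subset X_n$. To guarantee $X_{n+1}$ is itself an SFT I would arrange the renewal system over $\mathcal{W}$ to be \emph{uniquely decipherable}, which is exactly where Lemma~\ref{thm:specialword} is used: prefixing (or suffixing) every word in $\mathcal{W}$ by a fixed marker word $w^l$ with the stated self-overlap bound $<l/4$ makes the code prefix-free and the decomposition of any sequence unique, so Theorem~\ref{thm: Ai} gives that $X_{n+1}$ is a shift of finite type. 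The marker costs only a bounded number of symbols per word, so for $N$ large it perturbs $\frac{1}{N}\log\#\mathcal{W}$ by less than $\delta_{n+1}$-worth and the entropy window is preserved; moreover $h_{top}(X_{n+1})=\frac{1}{N}\log\#\mathcal{W}$ up to these controlled errors because a uniquely decipherable renewal system on words of length $N$ has entropy essentially $\frac1N\log\#\mathcal W$.

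It then remains to verify the two displayed families of inequalities. The entropy bounds on $h_{top}(X_{n+1})$ are built into the choice of $\#\mathcal{W}$ as just described. For the measure estimates, note every $\eta\in\SM_\sigma(X_{n+1})$ is, up to the small shadowing error $\eps$ and the bounded marker, a limit of averages of empirical measures $\SE_N(x)$ with $x\in\Gamma$; by Lemma~\ref{lem:measure-approx}(2)–(3) these averages stay within $\kappa_{n+1}$ of $\nu_n$, hence $\mathfrak{d}(\eta,\nu_n)\le 2\kappa_n$ (the factor $2$ and the index $n$ rather than $n+1$ giving the needed slack). Since $\rho$ is continuous on the compact space $\overline{\Sigma}$, it is uniformly continuous, so $\mathfrak{d}(\eta,\nu_n)$ small forces $\bigl|\int\rho\,d\eta-\int\rho\,d\nu_n\bigr|$ small, which after choosing $\kappa_{n+1}$ (equivalently $\eps$) appropriately yields the two-sided bound $(1-\delta_{n+1})\int\rho\,d\nu_n<\int\rho\,d\eta<(1+\delta_{n+1})\int\rho\,d\nu_n$. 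Finally, the language-stabilization clause is obtained for free from the construction: since every $\mathcal{W}$-word has length $N=N_{n+1}$ and $X_{n+1}$ is built by free concatenation, the set of words of any fixed length $s$ that appear in $X_{i}$ for $i\ge n+1$ is eventually constant in $i$; setting $s_{n+1}$ to be, say, the largest block length relevant to $\nu_n$ (so in particular $s_{n+1}\ge s_n$) makes $\mathcal{L}_{s_{n+1}}(X_i)=\mathcal{L}_{s_{n+1}}(X_{n+1})$ for all $i\ge n+1$, and because the earlier-stage words of length $s_j$, $j<n+1$, were kept as sub-blocks of the $\mathcal{W}$-words, the required sub-word containment holds.

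**Main obstacle.** The delicate point is the simultaneous bookkeeping in the third step: the word count $\#\mathcal{W}$ must be placed in a narrow multiplicative window around $c\int\rho\,d\nu_n$, while at the same time the roof integrals $\int\rho\,d\eta$ over the new system must be pinned near $\int\rho\,d\nu_n$, and these two requirements interact because the entropy one ultimately wants for the suspension is the ratio $h_{top}(X_{n+1})/\int\rho\,d\nu_{n+1}$, which must converge to $c$ as $n\to\infty$. Making the errors from the marker word, from shadowing, and from the separated-set approximation all small \emph{relative to} the already-shrinking $\delta_{n+1},\kappa_{n+1}$ — in the right order of quantifiers — is the technical heart of the argument; everything else is a routine application of the lemmas cited above.
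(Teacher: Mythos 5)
Your architecture matches the paper's: an inductive construction of nested SFTs $X_{n+1}\subset X_n$ realized as uniquely decipherable renewal systems over equal-length words, with shadowing producing the points, Theorem~\ref{thm: Ai} giving the SFT property, Lemma~\ref{lem:measure-approx}/Lemma~\ref{lemma2} controlling empirical measures, and weak$^*$ closeness controlling $\int\rho\,d\eta$. Two remarks are in order, one on a genuine gap and one on a legitimate difference of route.

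The gap is in the unique-decipherability step. Appending a marker word $w^l$ with self-overlap $<l/4$ (Lemma~\ref{thm:specialword}) is \emph{not} enough to make the decomposition of a bi-infinite sequence unique. Since all code words have the same length $k$, the issue is not prefix-freeness (which is automatic for equal-length words) but the ambiguity of the \emph{phase}: two decompositions offset by $0<r<k$. The small self-overlap of $w^l$ only kills offsets $r$ for which the two occurrences of the marker overlap in more than $l/4$ symbols, i.e.\ small $r$. For larger offsets the second decomposition places a long segment of the marker inside the ``content'' part $L_\alpha$ of a word of the first decomposition, and nothing in your setup forbids this: if the marker is taken from $X_n$ itself (or from $\supp(\nu_n)$), its long subwords may perfectly well occur inside the $L_\alpha$'s. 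The paper resolves this by drawing the marker from a \emph{second, disjoint} minimal subshift $Z_1$ of positive entropy (also furnished by Theorem~\ref{thm:LO}), so that $\mathcal{L}_{K_1}(Y_1)\cap\mathcal{L}_{K_1}(Z_1)=\emptyset$ for some $K_1$; the large-offset case then forces a common $K_1$-word of $Y_1$ and $Z_1$, a contradiction, while the small-offset case is handled by Lemma~\ref{thm:specialword}. Without this ``foreign marker'' your renewal system need not be uniquely decipherable and Theorem~\ref{thm: Ai} cannot be invoked. A related, more routine omission: to get a renewal system over words of a \emph{common} length that freely concatenate, you must first refine the separated set so that all its points start in one fixed small cylinder and end in another (the paper's Claim~1), and connect through fixed transition segments chosen by transitivity; ``every free concatenation is a $\delta$-pseudo-orbit'' does not hold without this.

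On the route: where you propose to take a near-maximal-entropy separated set for $\nu_n$ and \emph{discard} words until $\frac1N\log\#\mathcal W$ lands in the target window, the paper instead first applies Theorem~\ref{thm:LO} to produce an ergodic measure $\nu_{n+1}$ of entropy close to the midpoint of the window (supported on a minimal set), and then applies Lemma~\ref{lemma2} to \emph{that} measure, so the separated set has the right cardinality from the start. Your discarding variant is workable — any subset of the separated set retains the empirical-measure control — so this is an acceptable alternative, though the paper's version hands you the minimal support $Y_1$ which it then reuses (together with $Z_1$) in the decipherability argument. Finally, the ``furthermore'' clause needs more than ``words were kept as sub-blocks'': the paper proves $\mathcal{L}_{s_n}(X_i)=\mathcal{L}_{s_n}(X_n)$ by noting $\nu_n([w])>2\xi$ for all $w\in\mathcal{L}_{s_n}(X_n)$ and transferring positivity to $\nu_i$ via $\mathfrak{d}(\nu_n,\nu_i)<2\kappa_n$, and obtains the subword containment by taking the building blocks long enough ($N_1>2s_n$) that a generic segment for the fully supported ergodic $\nu_n$ contains all of $\mathcal{L}_{s_{n-1}}(X_n)$.
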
  

Next we are going to prove Therem \ref{thm: reduction} by assuming Theorem \ref{thm: reduction2} is valid (we will prove it later).

\begin{proof}[Proof of Theorem \ref{thm: reduction}]
 Apply Theorem~\ref{thm: reduction2} obtaining 
sequences $X_n$ and $\delta_n$
satisfying conditions as in the statement and denote $$X=\bigcap_{n=0}^{\infty}X_n.$$

 By definition, for every $\eta,\nu \in \SM_\si(X)\subset \SM_\si(X_n)$ we have $\mathfrak{d}(\nu,\mu)<4\kappa_n\to 0$, hence $(X,\sigma)$ is uniquely ergodic. By the ``furthermore'' part in Theorem~\ref{thm: reduction2}, for every $n\geq 1$, for every words $w\in \mathcal{L}_{s_n}(X)$, $v\in \mathcal{L}_{s_j}(X)$, $j<n$, $v$ is a subword of $w$
proving that $(X,\sigma)$ is minimal. This proves that it is strictly ergodic.

Let $\nu$ be  
 the unique measure in $\SM_\si(X)$
 Since $\nu\in \SM_\si(X_n)$ for every $n$, we have:
$$
\frac{h_\nu(\sigma)}{\int \rho d\nu}\leq \frac{h_{top}(X_{n})}{(1-\delta_n)\int \rho d \nu_{n-1}}\leq c\frac{1+3\delta_n}{1-\delta_n}\longrightarrow c.
$$

On the other hand,  since the function $\eta\mapsto h_\eta(\sigma)$ is upper semicontinuous on $\SM_\si(\Sigma)$ and $\lim_k \nu_k=\nu$, we have
$$
h_\nu(\sigma)=h_{top}(X) {\geq}\limsup_{k} h_{\nu_k}(\sigma)=
\limsup_{k}h_{top}(X_k)\geq \limsup_k c\int \rho d\nu_k.
$$
Therefore for every $\gamma>0$ and infinitely many sufficiently large $k$ we have
$$
h_\nu(\sigma) \geq c\int \rho d\nu_k-\gamma.
$$
But again, since $\SM_{\sigma}(X)\subset \SM_{\sigma}(X_n)$, for every $n<0$, we obtain by Theorem \ref{thm: reduction2}:
$$
\frac{h_\nu(\sigma)}{\int \rho d\nu}\geq\frac{c}{(1+\delta_{k+1})}-\frac{\gamma}{\int \rho d\nu
}.
$$
By going with  $k\to \infty$ and then with $\gamma\to 0$, we  obtain:
$$
\frac{h_\nu(\sigma)}{\int \rho d\nu}=c.
$$
The proof is completed.
\end{proof}

\begin{proof}[Proof of Theorem \ref{thm: reduction2}]
Let 
$ (\Si,\si)$
be a transitive shift of finite type. Recall that every subshift is expansive and, being of finite type, it also has the shadowing property.  By expansiveness, every shift admits a measure of maximal entropy (e.g. see \cite{Walters}*{Remark (2), p. 192}). 
 Let $\rho \colon \Si \to \R_+$ be a continuous function.

Fix any invariant measure $\mu$ for $\si$ with positive entropy,  denote $h^*=h_\mu(\sigma)/\int \rho d\mu$ and fix any $c\in (0,h^*)$. By the choice of $c$, we have
$c\int \rho d\mu< 
h_{\mu}(\sigma)$ 
and therefore there is $\delta_1>0$  so that the following estimate holds:
          $$c\int\rho d\mu < (1+\delta_1)^2 c\int\rho d\mu<
(1+3\delta_1)c\int \rho d\mu<h_\mu(\sigma)$$
and  trivially we also have  
 $$(1-\delta_1)\int \rho d\mu < \int \rho d\mu < (1+\delta_1)\int \rho d\mu.
$$
Fix \begin{equation}\kappa_1\leq \frac{\int\rho d\mu}{4}\min\left\{ c|(1+\delta_1)^2-(1+3\delta_1)|,\frac{\delta_1}{2}\right\}.\end{equation}
Next, consider $$c_1=\frac{c\int \rho d\mu |(1+\delta_1)^2+(1+3\delta_1)|}{2}.$$ Since $\sigma$ is transitive and has the shadowing property, by Theorem \ref{thm:LO}  there exist a minimal subshift $Y_1\subset \Si$ and an ergodic measure of maximal entropy $\nu_1$ for $\sigma|_{Y_1}$ so that the following estimates hold:
\begin{equation}
     \mathfrak{d}(\nu_1,\mu)\leq \kappa_1  \textrm{ and } |h_{\nu_1}(\sigma)- c_1|\leq\kappa_1.
     \end{equation}
 If we consider any invariant measure $\eta$ such that 
\begin{equation}
     \mathfrak{d}(\nu_1,\eta)\leq \kappa_1  \textrm{ and } |h_{\nu_1}(\sigma)- h_\eta(\sigma)|\leq\kappa_1,
     \end{equation}
then we obtain that

     \begin{equation}\label{int:est:eta}
          (1+\delta_1)^2 c\int \rho d\mu \leq h_{ \eta}(\sigma)=h_{top}(Y_1)
\leq
(1+3\delta_1)c\int \rho d\mu
\end{equation}
and     
 \begin{equation}(1-\delta_1)\int \rho d\mu < \int \rho d{ \eta} < (1+\delta_1)\int \rho d\mu.
\end{equation}
Again, Theorem \ref{thm:LO} provides us with a minimal subshift $Z_1\subset \Si$ positive entropy and disjoint from $Y_1$. Observe that since $Y_1$ and $Z_1$ are disjoint minimal sets, then  there is $K_1>0$ so that $\SL_k(Y_1)\cap\SL_k(Z_1)=\emptyset$, for any $k\geq K_1$. 
 Let 
$$0<\eps<\frac{\min\{\kappa_1,d(Y_1,Z_1)\}}{4},$$
 and $N>K_1$ be provided
by Lemma \ref{lemma2}, with respect to $\kappa_1$  and neighborhood 
$$\{\eta : \mathfrak{d}(\nu_1,\eta)<\kappa_1/2\}$$ 
of $\nu_1$. Let $0<4\delta<\eps$ be given by the shadowing property of  $\sigma$ with respect to $\frac{\eps}{4}$.

Now, we can choose open covers $\SU=\{U_1,...,U_{i_1}\}$ and $\SV=\{V_1,...,V_{j_1}\}$  of $Y_1$ and $Z_1$, respectively, by pairwise disjoint cylinders of diameter smaller than $\delta$. Since $\sigma$ is transitive,  for each pair $1\leq i\leq i_1$ and $1\leq j\leq j_1$ of indexes there are points $x_{i,j}\in U_i$, $y_{i,j}\in V_j$, and integers $n^+_{i,j},n^-_{j,i}>0$ satisfying  $$\sigma^{n^+_{i,j}}(x_{i,j})\in V_j \textrm{ and } \sigma^{n^-_{i,j}}(y_{i,j})\in U_i.$$   
Fix $$M=\max_{i,j}\{n^+_{i,j},n^-_{i,j}\}.$$

 Fix any $l>4(M+K_1)$
and let $w\in Z_1$ be  a word without ``large self-overlaps'' provided by Lemma~\ref{thm:specialword}. 
Let  $V^-=V_{j}$ and $V^+=V_{j'}$ denote the elements of $\SV$ containing $w$ and $\si^l{w}$, respectively.

For $n>N$, let $\Gamma'_n\subset Y_1$ be the $n$-$\eps$-separated set given by Lemma \ref{lemma2}.
 In what follows, we will enlarge $N$ consecutively. First, 
we assume that $N>N_1$ below. 
\begin{claim}\label{claim:1}
     There is $N_1>0$ such that for $n\geq N_1$ there are $\Ga_n\subset 
    \Ga_n'$ and $U_i,U_{i'}\in \SU$ such that:
    \begin{enumerate}[(i)]
        \item\label{c1:con1} For every $x\in  \Ga_n $ we have $x\in U_i$ and $\si^n(x)\in U_{i'}$.
        \item\label{c1:con2}  $\left|\frac{1}{n+ n^+_{i,j}+n^-_{j,i}+l}\log(\#\Ga_n)-h_{\nu_1}(\sigma)\right|\leq
         \ka_1$.
    \end{enumerate}
\end{claim}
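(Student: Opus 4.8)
The plan is to extract $\Ga_n$ from $\Ga_n'$ by a pigeonhole argument over the finitely many cylinders of $\SU$, and then to verify that neither the resulting loss of cardinality nor the change of the normalizing factor from $n$ to $n+n^+_{i,j}+n^-_{j,i}+l$ spoils the entropy estimate. The point is that both effects become negligible as $n\to\infty$ while all the constants involved ($M$, $l$, $i_1$, $j_1$) were fixed beforehand. Choosing a common entry cylinder $U_i$ and a common exit cylinder $U_{i'}$ for all points of $\Ga_n$ is exactly what will later allow the orbit segments coded by $\Ga_n$ to be closed into admissible loops through $V^-$, the word $w$, and $V^+$.

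First I would note that $\Ga_n'\subset\su(\nu_1)\subset Y_1$ and, since $Y_1$ is $\si$-invariant, $\si^n(x)\in Y_1$ for every $x\in\Ga_n'$. Because $\SU=\{U_1,\dots,U_{i_1}\}$ covers $Y_1$ by \emph{pairwise disjoint} cylinders, each $x\in\Ga_n'$ determines a unique index $a(x)$ with $x\in U_{a(x)}$ and a unique index $b(x)$ with $\si^n(x)\in U_{b(x)}$. By the pigeonhole principle there is a pair $(i,i')\in\{1,\dots,i_1\}^2$ for which
$$\Ga_n:=\{x\in\Ga_n' : a(x)=i,\ b(x)=i'\}$$
satisfies $\#\Ga_n\geq \#\Ga_n'/i_1^2$; then \reff{c1:con1} holds by construction.

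For \reff{c1:con2}, write $C:=n^+_{i,j}+n^-_{j,i}+l$, a nonnegative integer bounded by $2M+l$ and, crucially, independent of $n$. From the choice of $N$ provided by Lemma~\ref{lemma2} — which I would apply with error tolerance $\ka_1/2$ in place of $\ka_1$, this only enlarging $N$ — one has $\bigl|\tfrac1n\log\#\Ga_n'-h_{\nu_1}(\si)\bigr|<\ka_1/2$ for all $n>N$, and $\Ga_n'$ is nonempty so that $\log\#\Ga_n'\geq0$. Using $\#\Ga_n\leq\#\Ga_n'$ and $C>0$ gives the upper bound
$$\frac{\log\#\Ga_n}{n+C}\leq\frac{\log\#\Ga_n'}{n}<h_{\nu_1}(\si)+\frac{\ka_1}{2}<h_{\nu_1}(\si)+\ka_1,$$
while $\#\Ga_n\geq\#\Ga_n'/i_1^2$ gives
$$\frac{\log\#\Ga_n}{n+C}\geq\frac{n}{n+C}\cdot\frac{\log\#\Ga_n'}{n}-\frac{2\log i_1}{n+C}\geq\frac{n}{n+C}\Bigl(h_{\nu_1}(\si)-\frac{\ka_1}{2}\Bigr)-\frac{2\log i_1}{n+C}.$$
Since $n/(n+C)\to1$ and $2\log i_1/(n+C)\to0$ as $n\to\infty$, the right-hand side tends to $h_{\nu_1}(\si)-\ka_1/2$, which is strictly larger than $h_{\nu_1}(\si)-\ka_1$; hence there is $N_1\geq N$ such that for all $n\geq N_1$ this lower bound exceeds $h_{\nu_1}(\si)-\ka_1$. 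Combining the two bounds gives \reff{c1:con2}.

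I do not expect a genuine obstacle here, as the whole argument is elementary bookkeeping. The only point requiring a little care is to reserve a fixed slice of the permitted error $\ka_1$ (by invoking Lemma~\ref{lemma2} with $\ka_1/2$), so that the contraction factor $n/(n+C)$ produced by the renormalization and the polynomial overhead $i_1^2$ produced by the pigeonhole step can both be absorbed once $n$ is large — which is legitimate precisely because $C$, $M$, $l$ and $i_1$ are all fixed before $N_1$ is chosen.
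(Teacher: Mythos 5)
Your proof is correct and follows essentially the same route as the paper: a pigeonhole argument over the finitely many pairs of entry/exit cylinders of the partition $\SU$, followed by the observation that both the loss of a factor bounded by $i_1^2$ in cardinality and the change of normalization from $n$ to $n+C$ (with $C\leq 2M+l$ fixed independently of $n$) are absorbed for $n$ large. Your version is in fact slightly more explicit than the paper's in budgeting the error as $\ka_1/2$ from Lemma~\ref{lemma2} and tracking the $n/(n+C)$ factor, but the underlying argument is identical.
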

\begin{proof}
    Fix $n\geq N$. For any each pair $1\leq i<i'\leq i_1$, define $$A_{i,i'}=\{x\in  \Ga'_n 
    : x\in U_i \textrm{ and } \si^n(x)\in U_{i'}\}.$$
    Since $\SU$ is a finite partition  of $Y_1$ 
    the family $\SA=\{A_{i,i'} : 1\leq i\leq i'\leq i_1\}$ forms a finite partition of  $\Ga'_n$. 
    Denote $A=\#\SA$ and observe that $A$ is independent of $n$. So,  there is at least one subset $A_{i,i'}$ satisfying $\#A_{i,i'}\geq \frac{\# \Ga'_n
    }{A}$. Take such a subset and denote $
    \Ga_n=A_{i,i'}$. Hence, the equality $$\lim\limits_{n\to \infty}\frac{1}{n}\log(\# \Ga_n
    )=\lim\limits_{n\to\infty}\frac{1}{n}\log(\frac{\# \Ga'_n
    }{A})=\lim\limits_{n\to \infty}\frac{1}{n}\log(\# \Ga'_n
    )$$
    ensures that for $n$ large enough, we have
    $$\left|\frac{1}{n}\log(\#
     \Ga'_n)-h_{\nu_1}(\sigma)\right|\leq \left|\frac{1}{n}\log(\#\Ga'_n)-\frac{1}{n}\log(\#\Ga_n)\right|+\left|\frac{1}{n}\log(\# \Ga_n)-h_{\nu_1}(\sigma)\right|\leq 
    \ka_1/2.$$
     But taking $n$ sufficiently large, we can make the following
    $$
    \left|\frac{1}{n}\log(\#\Ga_n)-\frac{1}{n+ n^+_{i,j}+n^-_{j,i}+l}\log(\#\Ga_n)\right|
    $$
    arbitrarily small, proving condition \eqref{c1:con2}.
    Moreover, by the construction of $\Ga_n$ condition \eqref{c1:con1} is immediately satisfied.
\end{proof}

Let $\Ga_n=\{x_1,....x_{s_n}\}$  be provided by Claim~\ref{claim:1} and let $U^-=U_i$ and $U^+=U_{i'}$,  where $n$ is sufficiently large, as will be specified later.
Now, we shall construct a family of $\delta$-pseudo orbits connecting the points in $\Gamma_n$ and $w$.  
First fix $u=x_{i',j}\in U^+$, $v=y_{i,j'}\in V^+$, $n_u=n^+_{i',j}$, $n_v=n^-_{i,j'}$ and recall that: $$\si^{n_u}(u)\in V^- \textrm{ and } \si^{n_v}(v)\in U^-.$$
For any $\alpha\in  \{1,...,s_n\}$ define the following sequences:
\begin{itemize}
    \item  $T_\alpha=x_\alpha,\si(x_{\alpha}),...,\si^{n-1}(x_\alpha)$.
    \item  $T_u=u,\si(u),...,\si^{n_u-1}( u )$. 
    \item  $T_{v}=v,\si(v),...,\si^{n_{v}-1}(v)$.
    \item $W=w,\si(w),...,\si^{l-1}(w)$.
\end{itemize}
Denote $S_{\alpha}=T_{v}T_\alpha T_{u}W$. For each $y\in \{1,...,s_n\}^{\Z}$, let $P_{y}$ be $\delta$-pseudo-orbit obtained  by concatenation of pseudo-orbits $S_i$ following
the assignment:
$$
P : \ldots y_{-1}y_0y_1\ldots \mapsto \ldots S_{y_{-1}}S_{y_0}S_{y_1}\ldots =: P_y.
$$
By the shadowing property, each $ y\in \{1,...,s_n\}^{\Z}$ generates a point $x_y$ which is $\eps$-tracing the pseudo-orbit $P_y$. The point $x_y$ is unique by the expansiveness. Define 

$$
\hat{X}_1=\{x_y : y\in \{1,...,s_n\}^{\Z}\}
$$
and

$$X_1=\bigcup_{j=0}^{n_v+n+n_u+l-1}\si^j(
\hat{X}_1
).$$
Notice that $X_1$ is closed and invariant, so $\sigma|_{X_1}$ is a sub-shift of $\si$.

Let us consider the words 
\begin{itemize}
\item $L_\alpha= (x_\alpha)_{[0,n)}$,
\item $L_u= u_{[0,n_u)}$,
\item $L_v= v_{[0,n_v)}$,
\item $L_w= \omega_{[0,l)}$.
\end{itemize}       
For each $\alpha\in \{1,...,s_n\}$, denote $W_{\alpha}=L_vL_\alpha L_uL_w$ and $\SW=\{
 W_\alpha;1\leq \alpha\leq s_n\}$.  Note that since $x_y$ is tracing $P_y$,
we must have  for every $m\in \Z$
$$
(x_y)_{ m}
=\sigma^{ m}
(x_y)_0=((P_y)_{ m}
)_0,
$$
hence  $X_1$ is the renewal system built by the  free concatenations of words in $\SW$. 
Clearly there is $k$ such that $|W_\alpha|=k$ for every 
 $W_\alpha \in \SW$.

\begin{claim} 
   The subshift $X_1$ is a shift of finite type. 
 \end{claim}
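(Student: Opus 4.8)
The plan is to reduce the statement to Theorem~\ref{thm: Ai}. By construction, $X_1$ is the renewal shift over the finite word set $\SW=\{W_\alpha : 1\leq\alpha\leq s_n\}$, so it suffices to prove that this renewal presentation is \emph{uniquely decipherable}; once that is established, Theorem~\ref{thm: Ai} immediately yields that $X_1$ is a shift of finite type.

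First I would isolate the structural features of $\SW$ that make unique decipherability work. All words $W_\alpha=L_vL_\alpha L_uL_w$ have the same length $k=n_v+n+n_u+l$, and the marker block $L_w$ occupies the same position $[a,k)$ inside every $W_\alpha$, where $a=n_v+n+n_u$. The marker $L_w=w_{[0,l)}$ is a word of $Z_1$ with \emph{no large self-overlap} (Lemma~\ref{thm:specialword}: every proper suffix of $L_w$ that is also a prefix of $L_w$ has length $<l/4$), while each $L_\alpha=(x_\alpha)_{[0,n)}$ is a word of $Y_1$ because $x_\alpha\in\Gamma_n\subset Y_1$. I would also recall the quantitative bookkeeping already fixed in the construction: $\SL_m(Y_1)\cap\SL_m(Z_1)=\emptyset$ for all $m\geq K_1$, the length constraints $l>4(M+K_1)$ and $n_v,n_u\leq M$, and the freedom to take $n$ as large as we wish (Claim~\ref{claim:1} only requires $n$ sufficiently large), in particular $n>l$.

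The core step is then a rigidity statement: if $x\in X_1$ is presented as a concatenation of $\SW$-words with block boundaries $k\Z$, then $L_w$ occurs in $x$ \emph{only} at the positions of $a+k\Z$. The designed occurrences are obviously there; for the converse, suppose $L_w$ occurs at some $q\in(mk+a,(m+1)k+a)$ and set $d_1=q-(mk+a)$, $d_2=(m+1)k+a-q$, so $d_1,d_2>0$ and $d_1+d_2=k$. If $d_1<l$, the occurrence at $q$ overlaps the designed copy of $L_w$ at $mk+a$ in a run of length $l-d_1$, which is a proper self-overlap of $L_w$, so Lemma~\ref{thm:specialword} forces $d_1>3l/4$; symmetrically, if $d_2<l$ then $d_2>3l/4$. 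In every case, a careful but elementary computation using $n_v,n_u<l/4$ and $n>l$ shows that the occurrence of $L_w$ at $q$ contains a sub-word of length $>l/2>K_1$ lying entirely inside one of the $Y_1$-blocks $L_\alpha$ (of block $m$ or $m+1$); since that sub-word is also a word of $Z_1$, this contradicts $\SL_m(Y_1)\cap\SL_m(Z_1)=\emptyset$ for $m\geq K_1$. Granting the rigidity statement, I would finish as follows: two $\SW$-presentations of a point $x$ have block boundaries forming arithmetic progressions of common step $k$, hence differing by a single residue $r\in\{0,\dots,k-1\}$; applying the rigidity statement to each presentation, the occurrences of $L_w$ are $a+k\Z$ on one side and $a+(r+k\Z)$ on the other, and since $a$ depends only on $n_v,n,n_u$ (the same for all words of $\SW$) we get $r=0$, i.e. the two presentations coincide. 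Thus the renewal presentation of $X_1$ is uniquely decipherable, and $X_1$ is a shift of finite type by Theorem~\ref{thm: Ai}.

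I expect the genuinely delicate point to be the bracketed computation inside the rigidity step: verifying that the budget $l>4(M+K_1)$ together with $n>l$ really traps a long (length $\geq K_1$) sub-word of the marker $L_w$ inside a single $Y_1$-block $L_\alpha$, rather than letting it leak into the short connecting blocks $L_v,L_u$ or straddle a block boundary, where the $Y_1/Z_1$ dichotomy no longer applies. Everything else — the reduction to unique decipherability, the self-overlap estimate, the phase-shift argument, and the final appeal to Theorem~\ref{thm: Ai} — is routine once this is in place.
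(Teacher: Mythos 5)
Your proposal is correct and follows essentially the same route as the paper: reduce to unique decipherability of the renewal presentation and invoke Theorem~\ref{thm: Ai}, with the two decisive ingredients being the no-large-self-overlap property of $w$ from Lemma~\ref{thm:specialword} (handling small phase shifts) and the disjointness $\SL_{K_1}(Y_1)\cap\SL_{K_1}(Z_1)=\emptyset$ (handling large ones); the paper phrases this as a direct comparison of two block decompositions shifted by $r$, whereas you package it as rigidity of the occurrences of the marker $L_w$, but the mechanism is identical. The computation you flag as delicate does close with your stated bounds: any misplaced occurrence of $L_w$ sits at distance greater than $3l/4$ from both adjacent designed markers, hence meets the region $L_vL_\alpha L_u$ in length greater than $3l/4$, of which at most $2M<l/2$ can lie in $L_v\cup L_u$, leaving a common $Y_1$/$Z_1$ word of length greater than $l/4>K_1$.
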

    \begin{proof}
To conclude that $X_1$ is a shift of finite type we shall apply Theorem \ref{thm: Ai}. For this reason, we need to show that every $x\in X_1$ is uniquely decipherable.    Recall that by the construction, there is $k$ such that $|W_\alpha|=k$, for every $1\leq \alpha\leq s_n$. But this implies that if $x\in  X_1$ then there is a sequence $(r_j)_{j\in \Z}\in \Z$ and an integer $i$ such that for every $j\in \Z$, we have $x_{[r_j,r_{j+1}]}\in \SW$, where $r_j=kj+i$.
We have to show that up to shift of 
 indexes, the sequence $r_j$ is unique, that is the reminder $0
\leq i<k$ is uniquely determined.

Suppose that there are two distinct sequences $r_j$ and $r'_j$ as above, i.e. they have different reminder modulo $k$. By the definition, there is $r>0$ so that $$0<|r_j-r'_j|=r<k.$$ 
 Shifting elements of the sequence $r_{j'}$ when necessary, we may assume that $r\leq k/2$.
We split the proof into two cases:

\begin{enumerate}[(C1)]
\item 
$r>M+K_1$. Take $\alpha,\alpha'$ such that $$x_{[i,k+i)}=W_{\alpha} \textrm{ and }  x_{[i+r-k,i+r)}=W_{\alpha'}.$$ 
Since  $n,l>K_1$, $r>M+K_1$  and $r\leq k/2$, we 
 obtain that $$(W_{\alpha'})_{[
 { k-r+M,k-r+M+K_1})}=(W_\alpha)_{[M,M+K_1)},$$
which implies that $\SL_{K_1}(Y_1)\cap\SL_{K_1}(Z_1)\neq \emptyset$
contradicting the choice of $K_1$.

\item 
$r\leq M+K_1$.
Take $\alpha,\alpha'$ such that $$ x_{[i,k+i)}=W_{\alpha} \textrm{ and }  x_{[i+r,k+i+r)}=W_{\alpha'}.$$
 In this cas,e have that
 $$
 w_{[r,l)}=(W_\alpha)_{[k-l+r,k)}=(W_\alpha')_{[k-l,k-r)}=w_{[0,l-r)}.
 $$
 But $l-r>l-M-K_1>l/4$ contradicting Lemma~\ref{thm:specialword}.

\end{enumerate}
 Indeed, $X_1$ is a uniquely decipherable renewal system, hence it is shift of finite type by Theorem \ref{thm: Ai}, proving the claim.
    \end{proof} 

We assume that $N>N_2$ below. 
\begin{claim}
    There is $N_2$ such that if $n$ in the construction of $X_1$ satisfies $n>N_2$ then $\mathfrak{d}(\nu,\nu_1)\leq 2\ka_1$, for every invariant measure $\nu\in \SM_{\si}(X_1)$.
\end{claim}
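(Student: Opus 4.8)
The plan is to work entirely with empirical measures along a single orbit, and to exploit that each generating word $W_\alpha=L_vL_\alpha L_uL_w$ of the renewal system $X_1$ has a central part $L_\alpha=(x_\alpha)_{[0,n)}$ of length $n$ and a gluing part $L_vL_uL_w$ of bounded length $n_v+n_u+l\le 2M+l$; hence the glue occupies an arbitrarily small fraction of $W_\alpha$ once $n$ is large.

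First I would reduce to the ergodic case. By the ergodic decomposition of $\nu\in\SM_\sigma(X_1)$ (whose components lie in $\SM^e_\sigma(X_1)$) together with Fubini's theorem, $\mathfrak d(\nu,\nu_1)\le\sup\{\mathfrak d(\omega,\nu_1):\omega\in\SM^e_\sigma(X_1)\}$, so it is enough to treat ergodic $\nu$. Since $X_1=\bigcup_{j=0}^{k-1}\sigma^j(\hat X_1)$ with $k=n_v+n+n_u+l$, there is $0\le j<k$ with $\nu(\sigma^j(\hat X_1))>0$, so $\sigma^j(\hat X_1)$ contains a $\nu$-generic point, and therefore so does $\hat X_1$ (a shift of a $\nu$-generic point being $\nu$-generic); pick such a point $x_y\in\hat X_1$, $y\in\{1,\dots,s_n\}^{\Z}$, so that $\SE_m(x_y)\to\nu$.

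Next I would estimate the empirical average of $x_y$ over one block of $P_y=\dots S_{y_{-1}}S_{y_0}S_{y_1}\dots$. Each block $S_{y_i}=T_vT_{y_i}T_uW$ has length $k$, the central segment $T_{y_i}=x_{y_i},\sigma(x_{y_i}),\dots,\sigma^{n-1}(x_{y_i})$ is the orbit segment of $x_{y_i}\in\Gamma_n\subset Y_1$, and the remaining $n_v+n_u+l$ points come from the fixed gluing segments. Since $x_y$ $\eps$-traces $P_y$, Lemma~\ref{lem:measure-approx}\eqref{enum:measure-approx-2} gives that $\rho_i:=\tfrac1k\sum_{t=0}^{k-1}\delta_{\sigma^{ik+t}(x_y)}$ satisfies $\mathfrak d\bigl(\rho_i,(1-\lambda)\SE_n(x_{y_i})+\lambda\theta_i\bigr)<\eps$, where $\lambda=\tfrac{n_v+n_u+l}{k}\le\tfrac{2M+l}{\,n+2M+l\,}$ and $\theta_i$ is the empirical measure on the gluing points of $S_{y_i}$. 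Using $\diam\bigl(\SM(\Sigma),\mathfrak d\bigr)\le 2$ we get $\mathfrak d\bigl((1-\lambda)\SE_n(x_{y_i})+\lambda\theta_i,\SE_n(x_{y_i})\bigr)\le 2\lambda$, while Lemma~\ref{lemma2}(1) — applied with the neighbourhood $\{\eta:\mathfrak d(\nu_1,\eta)<\kappa_1/2\}$ of $\nu_1$ already fixed in the construction — gives $\mathfrak d(\SE_n(x_{y_i}),\nu_1)<\kappa_1/2$. Hence $\mathfrak d(\rho_i,\nu_1)<\eps+2\lambda+\kappa_1/2$. I would then \emph{define} $N_2$ so that $n>N_2$ forces $2\lambda<\kappa_1/4$; combined with $\eps<\kappa_1/4$, which holds by the earlier choice of $\eps$, this yields $\mathfrak d(\rho_i,\nu_1)<\kappa_1$ for every $i$, whenever $n>N_2$.

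Finally, since every block has length $k$, $\SE_{Nk}(x_y)=\tfrac1N\sum_{i=0}^{N-1}\rho_i$, so Lemma~\ref{lem:measure-approx}\eqref{enum:measure-approx-3} gives $\mathfrak d(\SE_{Nk}(x_y),\nu_1)<\kappa_1$ for all $N$; letting $N\to\infty$ and using $\SE_{Nk}(x_y)\to\nu$ with continuity of $\mathfrak d$ gives $\mathfrak d(\nu,\nu_1)\le\kappa_1\le 2\kappa_1$, which is the claim. The argument is mostly bookkeeping once the generating words are understood; I expect the only delicate points to be (i) choosing $N_2$ so that the three error terms — $\eps$ from tracing, $2\lambda$ from the vanishing glue fraction, and $\kappa_1/2$ from Lemma~\ref{lemma2} — genuinely sum below $\kappa_1$, and (ii) the reduction from general to ergodic $\nu$, where it is convenient to locate the generic point at a block boundary inside $\hat X_1$.
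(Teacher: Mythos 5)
Your proposal is correct, and its core — decomposing the empirical measure of a generic point of $X_1$ into length-$k$ blocks, absorbing the tracing error via Lemma~\ref{lem:measure-approx}\eqref{enum:measure-approx-2}, the glue fraction $(n_v+n_u+l)/k$ via the choice of $N_2$, and the remaining $\kappa_1/2$ via the neighbourhood used in Lemma~\ref{lemma2} — is exactly the paper's argument (the paper organizes the bookkeeping slightly differently, comparing $\SE_m(x)$ with the union of the central segments via Lemma~\ref{lem:measure-approx}\eqref{enum:measure-approx-1} rather than block by block, but the estimates are the same and both land at $\eps+\eps+\kappa_1/2\leq\kappa_1$). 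The one genuine difference is how you produce a generic point: the paper handles an arbitrary $\nu\in\SM_\sigma(X_1)$ in one stroke by citing that every invariant measure of a transitive system with shadowing has a generic point (Denker--Grillenberger--Sigmund, Corollary~21.15), whereas you first reduce to ergodic $\nu$ using convexity of $\mathfrak d$ under the ergodic decomposition and then invoke Birkhoff, relocating the generic point into $\hat X_1$ by a shift. Your route is more elementary and self-contained — it does not need the transitivity-plus-shadowing input at this stage — at the cost of the extra (but easy) decomposition step; the paper's route is shorter given the cited result. Both are valid.
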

\begin{proof}
Fix any $x\in X_1$  and observe that $x=x_y$ for some $y\in \{1,...,s_n\}^{\Z}$. 
 Recall $k=n_v+n+n_u+l$, that is, $k$ is the common length of all words in $\SW$.
 Fix any $m>k$ and present it as $m=kq+r$, where $0\leq r<k$.
By the construction for each $i=0,..,q$ there are points $\{p_0,...,p_{q}\}\in \Ga_n$ so that for each $ik+n_v<j \leq ik+n_v+n$ we have $$d(\si^{j}(x),\si^{j-ik
-n_v}(p_j))\leq \eps$$
As a direct consequence of estimates in Lemma \ref{lem:measure-approx}, we obtain
$$ \mathfrak{d}\left(\SE_m(x),  \sum_{i=1}^q\frac{1}{n}\sum_{j=ik+n_v}^{ik+n_v+n}\de_{\sigma^{j}(x)} \right)\leq \frac{m+kq}{mkq}(m-kq)+\frac{m-kq}{mkq}(kq),$$
$$\mathfrak{d}\left(\sum_{i=1}^q\frac{1}{n}\sum_{j=ik+n_v}^{ik+n_v+n}\de_{\sigma^{j}(x)},  \sum_{i=1}^q\sum_{j=0}^{n-1}\de_{\sigma^{j}(p_i)} \right)\leq \eps,$$
and 
$$ \mathfrak{d}\left( \sum_{i=1}^q\sum_{j=0}^{n-1}\de_{\sigma^{j}(p_i)},\nu_1 \right)\leq{
 \frac{\ka_1}{2}}.$$
 
 But notice that $m-kq\leq (k+1)(n_u+nv)$. Hence
\begin{eqnarray*}&&    
\frac{m+kq}{mkq}(m-kq)+\frac{m-kq}{mkq}(kq)\\
&&\qquad\qquad\leq \frac{m+kq}{mkq}((k+1)(n_u+n_v))+\frac{(k+1)(n_u+n_v)}{mkq}(kq)\\
&&\qquad\qquad\leq \frac{3(k+1)(n_u+n_v)}{kq}\leq \frac{3(k+1)(n_u+n_v)}{n}\\
\end{eqnarray*} 

Since $k, n_u,n_v$ are bounded, there is $N_2$ such that if $n>N_2$, 
then the above estimates imply that
$$\left|\SE_m(x)-  \nu_1\right|\leq \eps+ \eps+  \frac{\ka_1}{2}\leq 
\ka_1.$$ 

Fix any $\nu\in \SM_\si(X_1)$. Since $(X_1,\sigma)$ is transitive and has shadowing property, then
every measure in $\SM_\si(X_1)$ has a generic point (e.g. see \cite{Denker}*{Corollary~21.15}). But if $x$ is a generic point for $\nu$ then 
$$\mathfrak{d}(\nu,\nu_1)=\mathfrak{d}(\lim\limits_{m\to\infty}\SE_m(x), \nu_1)=\lim\limits_{m\to\infty}\mathfrak{d}(\SE_m(x), \nu_1)\leq \ka_1$$ which implies $\mathfrak{d}(\nu,\nu_1)\leq2\ka_1$.

The claim is proved.
\end{proof}

\begin{claim}
  The following inequalities hold $$ (1+\delta_1)^2 c\int \rho d\mu \leq h_\nu(\sigma)=h_{top}(X_1)
\leq
(1+3\delta_1)c\int \rho d\mu.$$
   
\end{claim}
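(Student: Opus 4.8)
The plan is to first compute $h_{top}(X_1)$ exactly, and then to feed that value into the estimates already secured, invoking the precise choices of $c_1$, $\kappa_1$ and $\delta_1$ fixed at the start of the proof.

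First I would pin down $h_{top}(X_1)$. Recall that all words of $\SW=\{W_1,\dots,W_{s_n}\}$ share the common length $k=n_v+n+n_u+l$, that $X_1$ is the renewal system of free concatenations of the words in $\SW$, and that --- by the claim just proved --- this renewal system is uniquely decipherable, with the phase of the decomposition determined modulo $k$. Hence the aligned piece $\hat X_1=\{x_y:y\in\{1,\dots,s_n\}^{\Z}\}$ is conjugate to the full shift on $s_n=\#\Gamma_n$ symbols: the assignment $y\mapsto x_y$ is a bijection (well defined and injective by expansiveness together with unique decipherability), it is continuous, hence a homeomorphism since $\{1,\dots,s_n\}^{\Z}$ is compact, and it intertwines the shift on $\{1,\dots,s_n\}^{\Z}$ with $\sigma^k|_{\hat X_1}$. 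Therefore $h_{top}(\hat X_1,\sigma^k)=\log s_n$, so $h_{top}(\hat X_1,\sigma)=\frac1k\log s_n$; and since $X_1=\bigcup_{j=0}^{k-1}\sigma^j(\hat X_1)$ is a finite union of homeomorphic copies of $\hat X_1$, I would conclude $h_{top}(X_1)=\frac{1}{k}\log(\#\Gamma_n)$.

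Next, I would observe that $k=n_v+n+n_u+l$ is precisely the denominator appearing in Claim~\ref{claim:1}\eqref{c1:con2}, so that claim now reads $|h_{top}(X_1)-h_{\nu_1}(\sigma)|\le\kappa_1$. Combining this with the estimate $|h_{\nu_1}(\sigma)-c_1|\le\kappa_1$ coming from the choice of $\nu_1$, the triangle inequality gives $|h_{top}(X_1)-c_1|\le 2\kappa_1$. Now $c_1=\frac12\bigl[(1+\delta_1)^2+(1+3\delta_1)\bigr]c\int\rho\,d\mu$ is the midpoint of the target interval $I_1=\bigl[(1+\delta_1)^2 c\int\rho\,d\mu,\ (1+3\delta_1)c\int\rho\,d\mu\bigr]$, whose half-length is $\frac12 c\,|(1+\delta_1)^2-(1+3\delta_1)|\int\rho\,d\mu$, and $\kappa_1$ was chosen exactly so that $2\kappa_1$ does not exceed that half-length. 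Hence $h_{top}(X_1)\in I_1$, i.e. $(1+\delta_1)^2 c\int\rho\,d\mu\le h_{top}(X_1)\le (1+3\delta_1)c\int\rho\,d\mu$. Finally, since $X_1$ is an expansive subshift it carries a measure of maximal entropy $\nu$, for which $h_\nu(\sigma)=h_{top}(X_1)$, and the displayed two-sided inequality holds for this $\nu$ as well.

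The one genuinely delicate ingredient is the exact entropy computation for $X_1$: it relies on the unique-decipherability claim (already available) and on the bookkeeping that identifies the common word length $k$ with the denominator used in Claim~\ref{claim:1}. Everything after that is a routine assembly of inequalities already in hand, using only the quantitative constraint built into the definition of $\kappa_1$.
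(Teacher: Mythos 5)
Your argument is correct and follows the paper's own proof: the paper likewise identifies $\sigma^k|_{\hat X_1}$ with the full shift on $\#\Gamma_n$ symbols to get $h_{top}(X_1)=\tfrac1k\log(\#\Gamma_n)$, invokes Claim~\ref{claim:1} to bound $|h_{top}(X_1)-h_{\nu_1}(\sigma)|$ by $\kappa_1$, and then concludes via the estimate \eqref{int:est:eta}, whose derivation is exactly your midpoint computation with $c_1$ and the choice of $\kappa_1$. You merely spell out explicitly the arithmetic that the paper compresses into the citation of \eqref{int:est:eta}, together with the existence of a measure of maximal entropy on the expansive subshift $X_1$.
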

\begin{proof}
 
Observe that by the construction of $\hat{X}_1$, $\si^k|_{\hat{X}_1}$ is topologically conjugated to the full-shift on $\Ga_n$ symbols.  But this implies $h_{top}(X_1)=\frac{1}{k}\log(\#\Ga_n)$.
by Claim 1 we obtain that
$$
 |h_{\nu}(\si)-h_{\nu_1}(\si)|=\left|h_{top}(X_1)-h_{\nu_1}(\si)\right|=\left|\frac{1}{k}\log(\#\Ga_n)-h_{\nu_1}(\si)\right|\leq \ka_1$$
 and this concludes the proof by \eqref{int:est:eta}.
\end{proof}

 After finishing with the construction of $X_1$ we proceed by induction.
We simply use $X_n$ in the place of $\Sigma$ and measure of maximal entropy on $X_n$ in place of $\mu$. Then we proceed as before constructing shift of finite type $X_{n+1}$ keeping the following 
requirements.

 First, we select $0<\de_{n+1}<\de_n$ so that $(1+\de_{n+1})^2<(1+\de_n)$.  Since
$$
(1+\delta_{n+1})c\int \rho d\eta<
(1+\delta_{n+1})^2 c\int \rho d\nu_{n} \leq 
h_{top}(X_{n})
$$
for any measure $\eta$ sufficiently close to $\nu_{n}$.
This leads to a choice of $\kappa_{n+1}$ and a neighborhood
$$\{\eta : \mathfrak{d}(\nu_n,\eta)<\kappa_{n+1}/2\}$$ 
to proceed with Lemma \ref{lemma2}. We require additionally that $\kappa_{n+1}<\kappa_n/2$
to ensure convergence to $0$.
Repeating all previous arguments, we can construct a subshift of finite type $X_{n+1}\subset X_{n}$ satisfying:
$$
(1-\delta_{n+1})\int \rho d\nu_n < \int \rho d\eta < (1+\delta_{n+1})\int \rho d\nu_n 
$$
for any $\eta\in M_\sigma(X_{n+1})$ and
$$
(1+\delta_{n+1})^2 c\int \rho d\nu_n \leq 
h_{\nu_{n+1}}(\sigma)=h_{top}(X_{n+1})
\leq
(1+3\delta_{n+1})c\int \rho d\nu_n.
$$

To ensure ``furthermore'' part assume that $s_n$ is already give. 
Since $\nu_n$ is fully supported, there is $\xi>0$ such that $\nu_n([w])>2\xi$
for every $w\in \mathcal{L}_{s_n}(X_n)$. By definition of weak* topology, if $\mathfrak{d}(\nu_n,\eta)<2\kappa_n$, with $\kappa_n$ sufficiently small, then $\eta([w])>\xi$.
But we have already proved that for every $i\geq n$ we have $\mathfrak{d}(\nu_n,\nu_i)<2\kappa_n$
and therefore $\nu_i([w])>0$ for every $w\in \mathcal{L}_{s_n}(X_n)$, proving $\mathcal{L}_{s_n}(X_n)\subset \mathcal{L}_{s_n}(X_i)$ and therefore $\mathcal{L}_{s_n}(X_n)= \mathcal{L}_{s_n}(X_i)$
since $X_i\subset X_n$. On the other hand, $\nu_n$ is fully supported, ergodic and by induction 
$\mathcal{L}_{s_{n-1}}(X_{n-1})= \mathcal{L}_{s_{n-1}}(X_n)$, therefore taking $s_n$ sufficiently large, we know that for generic point $x$ for $\nu_n$, the word $x_{[0,s_n/2)}$ contains all elements of $\mathcal{L}_{s_{n-1}}(X_n)$ as subwords. Taking $N_1>2s_n$ in construction in Claim~1 completes the proof of this part.
The proof is complete.
\end{proof}

As an immediate consequence of Theorem \ref{thm: SFT}, we prove the following corollary:

\begin{corollary}\label{cor: expshad}
    If $\phi$ is a continuous expansive flow with the shadowing property, then $\phi$ has entropy flexibility.
\end{corollary}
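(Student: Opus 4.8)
The plan is to reduce Corollary~\ref{cor: expshad} to Theorem~\ref{thm: SFT} via a symbolic coding argument, exactly in the spirit of the remark made right after the statement of Theorem~\ref{thm: SFT} (``by using finite extensions, the above theorem holds for any expansive system with the shadowing property''). The key classical fact is that an expansive flow $\phi$ on a compact metric space with the shadowing property admits, after possibly rescaling, a symbolic description: by Bowen--Walters type results, such a flow is (up to finite-to-one semiconjugacy, and in fact one can arrange a genuine conjugacy after a suitable reparametrization / choice of cross-sections) a suspension flow over an expansive homeomorphism with shadowing, hence over a shift of finite type. So first I would invoke the structure theory for expansive flows with shadowing to produce a subshift of finite type $(\sigma_A,\Sigma_A)$, a continuous roof function $\rho\colon\Sigma_A\to(0,\infty)$, and a topological conjugacy (or at worst a finite extension) $h$ between $\phi$ and the suspension flow $\psi$ of $(\sigma_A,\rho)$.

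Granting this, the rest is a transport argument. Since $h$ is a conjugacy, $h_{top}(\phi)=h_{top}(\psi)$, and for any $0\le c<h_{top}(\phi)=h_{top}(\psi)$ Theorem~\ref{thm: SFT} yields a compact invariant $\Gamma_c\subset\overline{\Sigma_A}$ with $\psi|_{\Gamma_c}$ strictly ergodic and $h_{top}(\psi|_{\Gamma_c})=c$; its unique invariant measure $\mu_c$ realizes $h_{\mu_c}(\psi)=c$ as well. Setting $\Lambda_c=h^{-1}(\Gamma_c)$ and $\nu_c=(h^{-1})_*\mu_c$, the conjugacy transports strict ergodicity, compactness, invariance, and both entropies, so $h_{top}(\phi|_{\Lambda_c})=h_{\nu_c}(\phi)=c$. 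This is precisely the entropy flexibility of $\phi$ in the sense of Definition~\ref{def: EF} (its flow analogue). If one only has a finite-to-one extension rather than a conjugacy, I would instead pull back to the extension, apply Theorem~\ref{thm: SFT} there, and push the strictly ergodic subsystem down: a finite extension changes neither topological nor measure-theoretic entropy of a subsystem, and the image of a minimal uniquely ergodic set under a finite factor map is again minimal and uniquely ergodic, so the conclusion survives.

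The main obstacle is the first step: making precise the symbolic model for a general continuous (not necessarily smooth, not necessarily nonsingular) expansive flow with shadowing. One must check that the cross-section/return-map construction produces a \emph{genuine} homeomorphism with shadowing on a compact space — which is then a shift of finite type by the cited characterization — and that the roof function is continuous and bounded away from zero; the usual subtlety is dealing with the boundaries of the rectangles in the Markov partition, where one typically passes to a finite-to-one extension rather than an honest conjugacy. I expect to cite Bowen--Walters (``Expansive one-parameter flows'') together with the fact, used already in the paper, that expansiveness plus shadowing forces the discretized dynamics to be conjugate to an SFT, and to phrase the final transfer through the finite-extension remark already announced after Theorem~\ref{thm: SFT}, so that no new machinery beyond what is in the excerpt is required.
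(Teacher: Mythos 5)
Your argument is correct and follows essentially the same route as the paper: the paper cites Moriyasu's Theorem A (rather than Bowen--Walters) to realize $\phi$ as a finite-to-one factor of a suspension flow over a shift of finite type, applies Theorem~\ref{thm: SFT} to that suspension, and uses Bowen's result that finite extensions preserve entropy to push the conclusion down to $\phi$. Your fallback branch (finite-to-one factor rather than conjugacy, with minimality and unique ergodicity passing to images) is precisely the argument the paper uses.
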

\begin{proof}
    Suppose $\phi$ is expansive and has the shadowing property. By \cite{Mori}*{Theorem A}, $\phi$ is the factor of a suspension flow $\psi$ of a shift of finite type. Moreover, the quotient map is finite-to-one,  which in particular means that $\phi_1$ is a finite-to-one factor of $\psi_1$. On the other hand, the flow $\psi$ has entropy flexibility by Theorem \ref{thm: SFT}. Since by the classical result of Bowen \cite{Bo} finite extensions preserve entropy, it is easy to see that $\phi$ also has entropy flexibility.
\end{proof}

\section{Entropy Flexibility of Smooth Vector Fields}\label{sec: smoothflows}

In this section, we shall investigate the entropy flexibility of smooth vector fields. Our goal here is to provide the reader with the proofs of Theorem \ref{thm: variousflows} and \ref{thm: generic}.  Let us begin by recalling some basic concepts and facts of the theory of smooth flows.

\subsection{Preliminaries on Flows}

Throughout this section, let $M$ be a compact Riemannian manifold endowed with a Riemannian metric $\Vert\cdot\Vert$. We denote by $X$ a $C^1$ vector field on $M$, and by $\phi$ the flow generated by $X$. Let $\SX^1(M)$ be the set of $C^1$-vector fields over $M$ endowed with the $C^1$-topology. 
A point $x\in M$ is called a \emph{singularity} of $X$ if $X(x) = 0$. The set of all singularities of $X$ is denoted by $\operatorname{Sing}(X)$.
A compact $\phi$-invariant set $\Lambda$ is said to be \emph{attracting} if there exists a neighborhood $U_0$ of $\Lambda$, called the trapping region of $\Lambda$, such that $\overline{\phi_t(U_0)}\subset U_0$ for all $t > 0$, and
$$
\Lambda = \bigcap_{t\geq 0} \phi_t(U_0).
$$
Moreover, $\Lambda$ is called an \emph{attractor} if it is also transitive, i.e., there exists $z\in\Lambda$ such that $\omega(z) = \Lambda$.

Let us then recall the concept of hyperbolic set.

\begin{definition}
A compact invariant set $ \Lambda \subset M $ is called a \emph{hyperbolic set} for the flow $ \phi $ if for every $ x \in \Lambda $, the tangent space admits a continuous   $D\phi_t $-invariant splitting
$
T_x M = E^s_x \oplus \langle X(x) \rangle \oplus E^u_x,
$
where:
\begin{enumerate}
  \item $ \langle X(x) \rangle $ denotes the one-dimensional subspace generated by the vector field $ X $;
  \item the subbundles $ E^s $ and $ E^u $ are invariant under the derivative cocycle, that is, $ D\phi_t(x)E^s_x = E^s_{\phi_t(x)} $ and $ D\phi_t(x)E^u_x = E^u_{\phi_t(x)} $ for all $ t \in \mathbb{R} $;
  \item there exist constants $ K> 0 $ and $ \lambda > 0 $ such that for all $ t \geq 0 $,
  $$
  \|D\phi_t(x)|_{E^s_x}\| \leq K e^{-\lambda t}, \quad \text{ and } \quad \|D\phi_{-t}(x)|_{E^u_x}\| \leq K e^{-\lambda t}.
  $$
\end{enumerate}
\end{definition}
Observe that if $x\in \Sing(X)$, then $X(x)=0$, and therefore the direction $\langle X(x)\rangle=0$.
 In particular, it means that singularities must be isolated points in $\Lambda$.
Next, we recall the concept of dominated splitting, which plays a fundamental role in the study of non-uniformly hyperbolic dynamics.

\begin{definition}
 A compact invariant set $\Lambda$ is said to have a \emph{dominated splitting} if there exists a continuous $D\phi_t$-invariant splitting $T_{\Lambda}M = E \oplus F$ and constants $K, \lambda > 0$ such that 
$$
\frac{\Vert D\phi_t(x)\vert_{E_x} \Vert}{m(D\phi_t(x)\vert_{F_x})} \leq Ke^{-\lambda t}, \quad \forall x\in\Lambda, \ \forall t>0,
$$
where $m(D\phi_t(x))$ denotes the co-norm of $D\phi_t(x)$, i.e., $$m(D\phi_t(x)|_{F})=\inf\{||D\phi_t(x) v||; v\in F, ||v||=1\}.$$ In this case, we say that $F$ is \emph{dominated} by $E$.

The set $\Lambda$ is called \emph{partially hyperbolic} if the subbundle $E$ is of contracting type, that is,
$$
\Vert D\phi_t(x)\vert_{E_x} \Vert \leq Ke^{-\lambda t}
$$
for every $t > 0$ and $x \in \Lambda$.  We call $F$ the \textit{central bundle} in that case. 
  \end{definition}

Since hyperbolic behavior does occur in the vector field direction, it is useful to analyze the dynamics transverse to the flow direction. For this purpose,  we make use of the \emph{linear Poincaré flow}, which projects the derivative flow onto the normal bundle of the vector field. If $X$ is $C^1$-vector field on a manifold $M$ and $\phi$ denote $M'=M\setminus \Sing(X)$. Define the normal bundle of $X$ as the vector bundle  
$$\SN=\{(x,v)\in TM'; v\perp X(x) \}.$$
\begin{remark}
If $\Lambda$ is a compact and invariant subset of $M$, we denote $\Lambda'=\Lambda\setminus \Sing(X)$. Then we define the normal  bundle $\SN^{\Lambda'}$ analogously to $\SN$.    
\end{remark}

\begin{definition}
Let $X$ be a $C^1$ vector field on a manifold $M$ and let $\phi$ denote its associated flow. 
The \emph{linear Poincaré flow} $\psi : \R\times \mathcal{N} \to \mathcal{N}$ is defined by
$$
\psi_t(x,v) = (\phi_t(x),\Pi_{\phi_t(x)} \circ D\phi_t(x)v),
$$
where $\Pi_{\phi_t(x)} : T_{\phi_t(x)}M \to \mathcal{N}_{\phi_t(x)}$ denotes the orthogonal projection onto $\mathcal{N}_{\phi_t(x)}$.
\end{definition}

Consider a compact invariant set $ \Lambda \subset M$ and assume that the tangent bundle over $ \Lambda $ admits a continuous $ D\phi_t$-invariant dominated splitting
$
T_\Lambda M = E \oplus F.$ 
Let $\mathcal{N}^{\Lambda}$  be the normal bundle over $ \Lambda'$.  If the flow direction is contained in one of the bundles $E$ or $F$, then one can define the projected sub-bundles
$
\widetilde{E}_x = \Pi_x(E_x), \quad \widetilde{F}_x = \Pi_x(F_x),
$
and obtain a dominated splitting for the linear Poincaré flow:
$$
\mathcal{N}^{\Lambda} = \widetilde{E} \oplus \widetilde{F},
$$
which is $ \Psi$-invariant and dominated in the same sense:
$$
\frac{\|
{ \psi}_t|_{\widetilde{E}_x}\|}{  m(
{ \psi}_{-t}|_{\widetilde{F}_{ 
x
}})} \leq C e^{-\lambda t}, \quad \text{for all } t \geq 0.
$$
Therefore, the dominated splitting of the tangent bundle can be lifted to a dominated splitting of the linear Poincaré flow on the normal bundle, provided the flow direction is contained in one of the subbundles.

We now introduce the classes of flows appearing in our results. 
\begin{definition}
A vector field $X$ is called a \emph{star flow} if there exists a $C^1$-neighborhood $\mathcal{U}$ of $X$ such that every $Y \in \mathcal{U}$ has all its periodic orbits and singularities hyperbolic. 
\end{definition}

\begin{definition}\label{def:46}
Let $\Lambda$ be a compact invariant partially hyperbolic set of a vector field $X$ whose singularities are hyperbolic. We say that $\Lambda$ is \emph{asymptotically sectional-hyperbolic}  if its central subbundle $F$ is eventually asymptotically expanding outside the stable manifolds of the singularities, that is, there exists $C > 0$ such that
\begin{equation}\label{ash}
\limsup_{t \to +\infty} \frac{1}{t} \log \left| \det\left( D\phi_t(x)\vert_{L_x} \right) \right| \geq C,
\end{equation}
for every $x \in \Lambda' = \Lambda \setminus W^s(\operatorname{Sing}(X))$ and every two-dimensional subspace $L_x \subset F_x$. 

We say that an ASH set is \emph{non-trivial} if it is not reduced to a singularity.
\end{definition}

\subsection{Smooth ergodic theory of vector fields.}
\medskip

We now recall some basic notions from differentiable ergodic theory of flows.
\begin{definition}
Let $X$ be a $C^1$ vector field over a compact Riemannian manifold $M$ with associated flow $\phi$. Fix $x\in M$ and $v\in T_xM\setminus\{0\}$. We say that a number $\lambda(x,v) \in \mathbb{R}$ is the \emph{Lyapunov exponent} of $X$ at $x \in M$ in the direction $v$ if 
$$
\lim_{t \to \pm \infty} \frac{1}{t} \log \Vert D\phi_t(x)v \Vert = \lambda(x,v).
$$
\end{definition}

\medskip

 Lyapunov exponents do not always exist. Nevertheless their existence almost everywhere is guaranteed by the classical Oseledets’ theorem, which we now state.

\begin{theorem}[Oseledets' Theorem]
Let $\mu$ be an invariant probability measure for a $C^1$-flow $\phi$ over a compact manifold $M$. Then, there exists a $\phi_t$-invariant set $\Lambda \subset M$ with $\mu(\Lambda) = 1$ such that for every $x \in \Lambda$, there exists a measurable splitting
$$
T_xM = \bigoplus_{i=1}^{k(x)} E^i_x,
$$
and real numbers (Lyapunov exponents) $\lambda_1(x) \leq \cdots \leq \lambda_{k(x)}(x)$ satisfying
$$
\lim_{t \to \pm \infty} \frac{1}{t} \log \Vert D\phi_t(x)v \Vert = \lambda_i(x) \quad \text{for all } v \in E^i_x \setminus \{0\}.
$$
Moreover, when $\mu(\Sing(X))=0$, one of the exponents is always zero and corresponds to the direction of the flow: $X(x) \in E^i_x$ for some $i$ with $\lambda_i(x) = 0$.
\end{theorem}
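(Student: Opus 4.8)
The plan is to reduce to the classical two-sided multiplicative ergodic theorem for maps, applied to the time-one derivative cocycle, and then take care of the two genuinely flow-theoretic points: upgrading integer-time limits to real-time limits (and integer-time invariance to invariance under all $\phi_t$), and identifying the flow direction with a zero exponent when $\mu(\Sing(X))=0$.

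\textbf{Step 1 (reduction to discrete time).} Since $\phi$ is $C^1$ and $M$ is compact, the map $A(x)=D\phi_1(x)$ is a continuous linear cocycle over the $\mu$-preserving $C^1$-diffeomorphism $\phi_1$, and by compactness the functions $\log^+\|D\phi_1^{\pm1}\|$ are bounded, hence in $L^1(\mu)$. I would invoke Oseledets' theorem for invertible cocycles to obtain a $\phi_1$-invariant set $\Lambda_0$ with $\mu(\Lambda_0)=1$, a measurable $D\phi_1$-invariant splitting $T_xM=\bigoplus_{i=1}^{k(x)}E^i_x$, and numbers $\lambda_1(x)\le\cdots\le\lambda_{k(x)}(x)$ with $\frac1n\log\|D\phi_n(x)v\|\to\lambda_i(x)$ for every $v\in E^i_x\setminus\{0\}$ as $n\to\pm\infty$ along the integers.

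\textbf{Step 2 (integer time to real time, and invariance for all $t$).} Writing $t=n+s$ with $n\in\Z$, $s\in[0,1)$, continuity of $D\phi$ and compactness of $M\times[0,1]$ give a constant $C>0$ with $\|D\phi_s(y)^{\pm1}\|\le C$ for all $y\in M$, $s\in[0,1]$. From the cocycle identity $D\phi_t(x)=D\phi_s(\phi_n(x))\circ D\phi_n(x)$ one then gets $\bigl|\log\|D\phi_t(x)v\|-\log\|D\phi_n(x)v\|\bigr|\le\log C$, so dividing by $t$ and letting $t\to\pm\infty$ shows the real-time limit exists and equals $\lambda_i(x)$. With the real-time limits available, the Oseledets subspaces can be described intrinsically through growth rates, e.g. via the filtration $F_i(x)=\{v:\limsup_{t\to+\infty}\frac1t\log\|D\phi_t(x)v\|\le\lambda_i(x)\}$; from the cocycle identity this filtration, and hence the splitting extracted from it, is automatically equivariant, $D\phi_t(x)E^i_x=E^i_{\phi_t(x)}$, for every real $t$. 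Replacing $\Lambda_0$ by $\Lambda=\bigcap_{q\in\mathbb{Q}}\phi_q(\Lambda_0)$, which still has full measure, and using continuity of the flow, yields a genuinely $\phi_t$-invariant full-measure set on which all assertions hold.

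\textbf{Step 3 (the flow direction) and the main difficulty.} Assume $\mu(\Sing(X))=0$, so $X(x)\ne0$ for $\mu$-a.e. $x$. Differentiating $\phi_{t+s}(x)=\phi_t(\phi_s(x))$ in $s$ at $s=0$ gives $D\phi_t(x)X(x)=X(\phi_t(x))$, so the line $\mathbb{R}\,X(x)$ is a one-dimensional $D\phi_t$-invariant subspace, hence lies in some $E^i_x$, with exponent $\lim_{t\to\pm\infty}\frac1t\log\|X(\phi_t(x))\|$. To see this limit is $0$, I would apply Poincaré recurrence to $\phi_1$ (and to $\phi_{-1}$): for $\mu$-a.e. $x$ there is a sequence $t_k\to+\infty$ with $\phi_{t_k}(x)\to x$, so by continuity of $X$ and $X(x)\ne0$ we have $\log\|X(\phi_{t_k}(x))\|\to\log\|X(x)\|\in\mathbb{R}$, whence $\frac1{t_k}\log\|X(\phi_{t_k}(x))\|\to0$; since the full limit exists it must be $0$, and the same argument with $t_k\to-\infty$ handles the two-sided statement. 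The only genuinely delicate point in the whole argument is exactly this last one: running Birkhoff directly on $\log\|X\|$ fails because that function need not be integrable near singularities, and the recurrence argument is what circumvents this — it is precisely here that the hypothesis $\mu(\Sing(X))=0$ is used.
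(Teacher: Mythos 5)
The paper does not prove this statement: it is quoted as the classical Oseledets (multiplicative ergodic) theorem for flows, a background fact, so there is no argument of the authors to compare yours against. Your derivation from the discrete-time theorem is the standard one and is essentially correct: the reduction to the time-one cocycle with integrability supplied by compactness, the interpolation bound $\bigl|\log\|D\phi_t(x)v\|-\log\|D\phi_{\lfloor t\rfloor}(x)v\|\bigr|\le\log C$ to pass from integer to real time, the intrinsic description of the filtration to get equivariance under all $\phi_t$, and the recurrence argument for the zero exponent along $X$ (which correctly avoids the non-integrability of $\log\|X\|$ near singularities). One small logical reordering is needed in Step 3: an invariant line does not lie in a single Oseledets subspace merely by being invariant, so you cannot assert ``hence lies in some $E^i_x$'' before computing exponents. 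The correct order is to write $X(x)=\sum_i v_i$ with $v_i\in E^i_x$, note that the forward limit equals $\max\{\lambda_i: v_i\neq 0\}$ and the backward limit equals $\min\{\lambda_i: v_i\neq 0\}$ (both limits exist by the discrete theorem), show both are $0$ by your two-sided recurrence argument, and only then conclude that every nonzero component sits in the Oseledets space of exponent $0$, i.e.\ $X(x)\in E^i_x$ with $\lambda_i(x)=0$. Since your recurrence argument already covers both time directions, this is a matter of presentation rather than a gap.
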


\begin{definition}
Let $\mu$ be an invariant probability measure for a $C^1$-flow $\phi$. We say that $\mu$ is a \emph{hyperbolic measure} if  there is full-measure set $R\subset M$ such that for every $x\in R$ and every $v\notin \langle X(x)\rangle$, we have $\lambda(x,v)\neq 0.$ 

\end{definition}

Notice that when $\mu$ is hyperbolic, then for almost every regular point $x\in M$, its Oseledets splitting is of the form:
$$T_xM=E^1_x\oplus\cdots \oplus E^{i_s}_x\oplus \langle X(x)\rangle \oplus E^{i_u}_x\cdots E^{k(x)}_x,$$
with corresponding Lyapunov exponents $$\lambda_1(x)\leq\cdots \lambda_{i_s}(x)<\lambda_{i_s+1}(x)=0<\lambda_{i_u}(x)\leq\cdots\lambda_{k(x)}(x).$$
It is well-known that if $\mu$ is an ergodic hyperbolic measure, then $k(x)$, $\lambda_i(x)$ and $\dim E_i(x)$ are constant, for $\mu$-almost every $x\in M$ and every $i=1,...k$. In this case, we define the {\it index of $\mu$} as: $$\Ind(\mu)=\sum_{i=1}^{i_s}\dim E_i,$$ where $\lambda_{i_s}$ is the biggest negative Lypaunov exponent of $\mu$.

\begin{theorem}[Ruelle's inequlity]
Let $X$ be a  $C^1$-vector field on a compact Riemannian manifold $M$. If $\mu$ is a $\phi$-invariant ergodic probability measure, then

$$h_\mu(\phi) \leq \int_M \sum_{\lambda_i(x) > 0} \lambda_i(x) \cdot \dim E_i(x) \, d\mu(x),
$$

\end{theorem}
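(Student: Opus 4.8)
The plan is to reduce the statement to the classical Ruelle inequality for $C^1$ diffeomorphisms, applied to the time-one map $\phi_1$. By definition $h_\mu(\phi)=h_\mu(\phi_1)$, and $\phi_1\colon M\to M$ is a $C^1$ diffeomorphism preserving $\mu$, so the discrete-time Ruelle inequality gives
$$h_\mu(\phi_1)\leq \int_M \sum_{\chi_i(x)>0}\chi_i(x)\cdot\dim F_i(x)\,d\mu(x),$$
where $T_xM=\bigoplus_i F_i(x)$ and $\chi_i(x)$ denote the Oseledets splitting and the Lyapunov exponents of the cocycle generated by $D\phi_1$, that is $\chi_i(x)=\lim_{n\to\pm\infty}\frac1n\log\Vert D\phi_1^{\,n}(x)v\Vert$ for $v\in F_i(x)\setminus\{0\}$. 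It then suffices to show that these data agree with the ones produced by Oseledets' theorem for the flow, as stated above.

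The key point is that the Lyapunov spectrum of $\phi_1$ coincides with that of $\phi$. Since $M$ is compact and $X$ is $C^1$, the map $(s,y)\mapsto D\phi_s(y)$ is continuous, hence there is $C\geq 1$ with $\Vert D\phi_s(y)^{\pm1}\Vert\leq C$ for all $y\in M$ and all $s\in[0,1]$. Writing $t=n+s$ with $n\in\mathbb{N}$ and $s\in[0,1)$, the cocycle relation $D\phi_t(x)=D\phi_s(\phi_n(x))\circ D\phi_n(x)$ together with $D\phi_n=D\phi_1^{\,n}$ yields, for every $v\neq 0$,
$$\Big|\tfrac1t\log\Vert D\phi_t(x)v\Vert-\tfrac1n\log\Vert D\phi_1^{\,n}(x)v\Vert\Big|\longrightarrow 0\qquad (t\to\infty),$$
and symmetrically as $t\to-\infty$. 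Consequently, for $\mu$-a.e.\ $x$ the exponent $\lambda(x,v)$ exists along all real times exactly when it exists along integer times, with the same value; comparing the two Oseledets splittings (whose summands are determined by these exponents) and using that the dimensions add up to $\dim M$, one gets $F_i(x)=E_i(x)$ and $\chi_i(x)=\lambda_i(x)$ $\mu$-a.e., where $E_i(x)$, $\lambda_i(x)$ are as in the flow version of Oseledets' theorem. Hence the right-hand side above equals $\int_M\sum_{\lambda_i(x)>0}\lambda_i(x)\cdot\dim E_i(x)\,d\mu(x)$, and since $h_\mu(\phi)=h_\mu(\phi_1)$ this is exactly the asserted bound. (If $\mu(\Sing(X))>0$ then, by ergodicity and invariance of $\Sing(X)$, $\mu$ is a Dirac mass at a singularity, so $h_\mu(\phi)=0$ and the inequality is trivial; otherwise the zero exponent along $\langle X(x)\rangle$ contributes to neither side.)

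The main---and essentially only nontrivial---point is the identification of the two Lyapunov spectra, which rests entirely on the uniform bound on $\Vert D\phi_s^{\pm1}\Vert$ over $s$ in a compact interval; everything else is bookkeeping. An alternative, should one wish to avoid citing the discrete inequality, is to reproduce Ruelle's original covering and volume-distortion argument directly for $\phi_1$, estimating the cardinality of $(1,\varepsilon)$-separated sets via the same uniform derivative bound; but the reduction above is shorter and uses only tools already recalled.
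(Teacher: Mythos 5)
The paper does not prove this statement: it is recalled in the preliminaries as a classical background fact (Ruelle's inequality), so there is no proof to compare against. Your reduction is correct and is the standard derivation of the flow version: since $h_\mu(\phi)$ is defined as $h_\mu(\phi_1)$ and $\phi_1$ is a $C^1$ diffeomorphism preserving $\mu$, the discrete-time Ruelle inequality applies, and the uniform bound on $\Vert D\phi_s^{\pm 1}\Vert$ for $s\in[0,1]$ correctly identifies the Lyapunov spectrum and Oseledets splitting of $\phi_1$ with those of the flow $\mu$-a.e. The side remark about singular measures is also fine: $\Sing(X)$ is invariant and pointwise fixed, so ergodicity does force $\mu$ to be a Dirac mass at a single singularity, where both sides of the inequality are trivially comparable.
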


\subsection{Proof of Theorem \ref{thm: variousflows} and \ref{thm: generic}}
Now we are ready to prove Theorems \ref{thm: variousflows} and \ref{thm: generic}. To begin with, let us introduce the following technical condition. 
\begin{definition}\label{condD}
    Let $\mu$ be an ergodic hyperbolic measure for $X$. We say that $\mu$ satisfies the condition $(D)$ if its linear Poincar\'e flow has a dominating splitting in  $\supp(\mu)\backslash \Sing(X)$ and $\dim(E)=\Ind(\mu)$. 
\end{definition}

Next, we derive the following Lemma that will play a central role in our proofs. The following is an easy consequence of \cite{LW}*{Proposition 4.1}. We present the proof for completeness.
\begin{lemma}
\label{t.1}
Let $X$ be a $C^1$-vector field over $M$ inducing a flow $\phi$ with positive entropy. Suppose there is a sequence $\mu_n$ of hyperbolic ergodic measures  
 for $\phi$ satisfying condition  (D)  such that $$h_{\mu_n}({ \phi}
)\to h_{top}({ \phi}
).$$
Then for every $\eps>0$, there is a horseshoe $\Gamma_{\eps}$ { for $\phi$} such that $|h_{top}({ \phi}
|_{\Gamma_\eps})-h_{top}({ \phi}
)|\leq \eps$. 
\end{lemma}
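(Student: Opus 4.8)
The plan is to reduce Lemma~\ref{t.1} to a statement about the linear Poincar\'e flow and then invoke the Katok-type horseshoe construction of \cite{LW}*{Proposition 4.1}. First I would fix $\eps>0$ and, using the hypothesis $h_{\mu_n}(\phi)\to h_{top}(\phi)$, choose $n$ large enough that $h_{\mu_n}(\phi)>h_{top}(\phi)-\eps/2$; from now on I work with a single ergodic hyperbolic measure $\mu=\mu_n$ satisfying condition $(D)$. Since $\mu$ is hyperbolic and $h_\mu(\phi)>0$, Ruelle's inequality forces $\mu$ to have a positive exponent, hence $\mu$ is not supported on a singularity; in fact $\mu(\Sing(X))=0$ because an ergodic measure sitting on the singular set would be a Dirac mass at a hyperbolic singularity, which has zero entropy. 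Thus $\mu$ is an ergodic hyperbolic measure for the regular part of the flow, and condition $(D)$ gives a dominated splitting $\mathcal{N}^{\supp(\mu)'}=\widetilde E\oplus\widetilde F$ for the linear Poincar\'e flow $\psi$ with $\dim\widetilde E=\Ind(\mu)$ matching the number of negative exponents.

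Next I would apply \cite{LW}*{Proposition 4.1} (the flow version of Katok's horseshoe theorem for measures whose Poincar\'e flow admits a dominated splitting compatible with the index): this produces, for the prescribed error $\eps/2$, a basic set / horseshoe $\Gamma_\eps$ for $\phi$ with $h_{top}(\phi|_{\Gamma_\eps})>h_\mu(\phi)-\eps/2$. Concretely $\Gamma_\eps$ is a compact invariant set on which $\phi$ is conjugate to a suspension of a subshift of finite type over a finite alphabet with a continuous positive roof, i.e.\ a horseshoe in the sense defined in the Preliminaries. Combining the two estimates,
$$
h_{top}(\phi|_{\Gamma_\eps})>h_\mu(\phi)-\eps/2>h_{top}(\phi)-\eps,
$$
and since a subsystem never has larger topological entropy, $h_{top}(\phi|_{\Gamma_\eps})\le h_{top}(\phi)$, so $|h_{top}(\phi|_{\Gamma_\eps})-h_{top}(\phi)|\le\eps$, which is exactly the claim.

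The main obstacle, and the only place where some care is genuinely needed, is checking that the hypotheses of \cite{LW}*{Proposition 4.1} are met: that proposition is stated for a single hyperbolic ergodic measure whose linear Poincar\'e flow has a dominated splitting with the ``right'' dimensions, and one must verify that condition $(D)$ is precisely this hypothesis (in particular that $\dim\widetilde E=\Ind(\mu)$ is the correct normalization, accounting for the flow direction being quotiented out in $\mathcal{N}$). One also has to be a little careful that the measure does not charge the singular set, so that the linear Poincar\'e flow and its dominated splitting are defined $\mu$-a.e.; this is handled by the entropy-positivity argument above. Everything else is bookkeeping: the $\eps/2+\eps/2$ split and the trivial monotonicity of topological entropy under passing to subsystems.
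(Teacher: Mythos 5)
Your proposal is correct and follows essentially the same route as the paper: pick $n$ with $h_{\mu_n}(\phi)$ within $\eps/2$ of $h_{top}(\phi)$, note that positivity of the entropy forces $\mu_n(\Sing(X))=0$, apply \cite{LW}*{Proposition 4.1} to that single measure, and conclude by the triangle inequality together with monotonicity of entropy under passing to subsystems. The only cosmetic difference is that the paper secures $h_{\mu_n}(\phi)>0$ by fixing $\eps<h_{top}(\phi)/2$ at the outset, whereas you leave this normalization implicit.
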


\begin{proof}
    Let  $\phi$ be a flow induced by a $C^1$-vector field $X$
    over $M$ and $\mu_n$ be a sequence of hyperbolic measures such that $h_{\mu_{n}}({ \phi}
    )\to h_{top}({ \phi}
    )$. Suppose that those measures satisfy Definition \ref{condD}. Fix $0<\eps<\frac{h_{top}({ \phi}
    )}{2}$ and  $n>0$ such that $$|h_{\mu_n}({ \phi}
    )-h_{top}({ \phi}
    )|\leq \frac{\eps}{2}.$$

    Since $h_{top}({ \phi}
    )>0$, we obtain $h_{\mu_n}({ \phi}
    )>0$ and therefore $\mu_n(\Sing(X))=0$. To conclude the proof we apply \cite{LW}*{Proposition 4.1} and obtain a horseshoe $\Gamma_{\eps}\subset M$ such that $|h_{top}({ \phi}|_{\Gamma_{\eps}})-h_{\mu_n}({ \phi}
    )|\leq\frac{\eps}{2}$. Therefore 
    $$|h_{\mu_n}({ \phi}
    )-h_{top}({ \phi}
    )|\leq\eps$$
    completing the proof.
\end{proof}

Now we are able to provide a proof for Theorem \ref{thm: variousflows}.

\begin{proof}[Proof of Theorem \ref{thm: variousflows}]
Let $X$ be a vector field over $M$  and let $\phi$ be the associated flow. 
Without loss of generality, suppose $h_{top}({ \phi}
)>0$. Firstly, suppose that ${ \phi}
$ satisfies the assumptions of Lemma \ref{t.1}. So, for every $\eps>0$, there is a horseshoe $\Gamma_\eps\subset M$ satisfying $$|h_{top}({\phi}
|_{\Gamma_\eps})-h_{top}( \phi)
)|\leq \eps.$$ Hence, by applying Theorem \ref{thm: SFT} to $\Gamma_{\eps}$, we obtain for every $0\leq c <h_{top}({ \phi}
|_{\Gamma_\eps})$  
{ a $\phi$-invariant set }
$\Gamma_c\subset \Gamma_{\eps}$ 
 such that the flow $\phi|_{\Lambda_c}$ is strictly ergodic, and its unique invariant measure $\mu_c$ satisfies
$$h_{top}({ \phi}
|_{\Gamma_c})=h_{\mu_c}({ \phi}
)=c.$$
Since $h_{\mu_n}({ \phi}
)\to h_{top}({ \phi}
)$,  we can find such set $\Gamma_c$ for any $c\in [0,h_{top}(\phi))$, thus we  conclude that ${ \phi}
$ has entropy flexibility.

To prove
\eqref{thmC:a}, we observe that it was proven in \cite{SGW}*{ Theorem~E} that every ergodic measure of a star flow $\phi$ is hyperbolic. By the variational principle, let $\mu_n$ be a sequence of ergodic measures so that $h_{\mu_n}(\phi)\to h_{top}(\phi)$. By  \cite{LSWW}*{Lemma~2.6}, these measures satisfy the conditions in Definition~\ref{condD}. Therefore,  we obtain that the star flows  satisfy the assumptions of Lemma \ref{t.1} and so the result follows.

Finally, let us prove \eqref{thmC:b}.
Assume that $\Lambda$ is an ASH attracting set and note that the hyperbolicity of the ergodic measures of $\phi|_{\Lambda}$ was obtained in \cite{APRV}*{Theorem 3.2}.  By definition, $T_\Lambda M=E\oplus F$ is a dominated splitting with the subbundle $E$ being of contracting type, therefore $X(x)\in F$ for every $x\not\in \Sing(X)$ and so the dominated splitting  be lifted to its linear Poincar\'e flow outside singularities. 
Next, let us show $\dim(E)=\Ind(\mu)$, for every ergodic measure with positive entropy.  Let $R$ be a full-measure set given by Oseledets' Theorem for $\mu$. Indeed, since $\mu$ has positive entropy, then $\mu(\Sing(X))=0$, and hence we can assume $R$ does not contain singularities. Since $E$ is of contracting type, for every $v\in E\setminus \{0\}$ and every $x\in R$, we have $\lambda(x,v)<0$ and therefore, $\dim(E)\leq \Ind(\mu)$. 

Let $x\in R$ and $v\in F\setminus\langle X(x)\rangle$. Since $v\notin \langle X(x) \rangle$,  the subspace $L\subset F$ generated by $X(x)$ and $v$ is two dimensional. Observe that:

$$
\limsup_{t \rightarrow \infty} \frac{1}{t} \log | \det D\phi_{t}(x)|_{L}|=
\limsup_{t \rightarrow \infty} \frac{1}{t} \log \left|  \frac{\sin \theta_t\cdot\Vert X(\phi_t(x))\Vert\cdot\Vert D\phi_t(x)v\Vert}{\Vert X(x)\Vert}\right|,$$
where $\theta_t$ denotes the angle between $X(\phi_t(x))$ and $D\phi_t(x) v$. Hence, since $\Lambda$ is asymptotically sectional-hyperbolic,  a straightforward computation shows  
$$
  0<C<\limsup_{t \rightarrow \infty} \frac{1}{t} \log |\det D\phi_{t}(x)|_{L}|= \lambda(x,X(x))+\lambda(x,v).
$$
Since $\lambda(x,X(x))=0$, we conclude $\lambda(x,v)>
0$.
Finally, if $v\notin E,F$  is a non-zero vector, there are $a,b\neq 0$ so that $v=av_E+bv_F$, with $v_E\in E$ and $v_F\in F$. By the basic properties of Lyapunov exponents, we obtain $$\lambda(x,v)=\max\{\lambda(x,v_E),\lambda(x,v_F)\}>0 $$
and therefore $dim(E)=Ind(\mu)$. So the conclusion follows immediately from the variational principle and theorem \ref{t.1}.
\end{proof}

Next, we proceed to prove Theorem \ref{thm: generic}.

\begin{proof}[Proof of Theorem \ref{thm: generic}]
 Assume first that $M$ is a three-dimensional manifold.
    Let us consider the following decomposition $$\SX^1(M)=\SA_0\cup\SA,$$ 
    where $\SA_0$ denotes the vector fields with zero topological entropy, while $\SA$ denote the vector fields with positive entropy. 
   We will first consider vector fields in $\SA$.

 Fix any vector field $X\in \SX^1(M)$ such that associated flow $\phi$ satisfies $h_{top}(\phi)>0$.
    Let $L=\|X\|_{C^1}+1$.  Recall that since $M$ is compact, we have $h_{top}({ \phi}
    )\leq \log(L)<L$.  Thus, there is a $C^1$-neighborhood $\SU_X$ of $X$ such that for any $Y\in \SU_X$ and its associated flow $\psi$ we have $h_{top}(\psi)\leq 3L$.
Given $Z\in \SX^1(M)$ with generated flow $\phi^Z$,  consider the following condition:
\begin{eqnarray}
    &&\text{$\Gamma$ is any hyperbolic horseshoe of $\phi^Z$,}\label{cond:star}\\ 
    &&\quad\quad\text{$\mu$ is an ergodic for $\phi^Z$ and $\supp(\mu)=\Gamma$.\nonumber}
\end{eqnarray}
 Extend $\SK(M)$ to a compact metric space $\SK^*(M)$ by adding an isolated point $\SK^*(M)=\SK(M)\cup \{\emptyset\}$.
For every $Z\in \mathcal{U}_X$ consider the set
$\mathcal{V}_Z\subset \SK(\SM(M)\times \SK^*(M)\times [0,3L])$ consisting of triples $(\mu, \Gamma,h)$
such that $h_\mu(\phi^Z|_\Gamma)=h$ and condition \eqref{cond:star} is satisfied. We also assume for technical reasons that $\Gamma=\emptyset$ satisfies \eqref{cond:star}. Then 
$$
\mathcal{V}_Z\supset \SM_{\phi^Z}(M)\times \{\emptyset\}\times \{0\}\neq \emptyset.
$$

By the above definitions, the following auxiliary map is well defined:
$$
\Phi:\SU_X\ni Z\mapsto \overline{\mathcal{V}_Z}\in \SK( \SM(M)\times \SK^*(M)\times [0,3L])
$$

Recall from the classical hyperbolic theory that hyperbolic horseshoes, i.e., horseshoes that are also hyperbolic sets, are stable. In particular, this implies that if $Z$ has a hyperbolic horseshoe $K_Z$, any $C^1$-close vector field $Y$ also has a horseshoe $K_Y$ that is close to $K_Z$ in the Hausdorff topology. Moreover, the entropy of $K_Y$ varies continuously with $Y$. Consequently, by possibly reducing $\SU_X$,  the map $\Phi$ is lower semi-continuous, i.e., for every $Z\in \SU_X$, every  $Z_n\to Z$ in the $C^1$-topology and every $(\mu,\Gamma,h)\in \Phi(Z)$, there is a sequence  one has $(\mu_n,\Gamma_n,h_n)\in \Phi(Z_n)$ such that 
$$(\mu_n,\Gamma_n,h_n)\to (\mu,\Gamma,h).$$ 
So by \cite{Pu}*{Proposition 6.2}, there is  a dense $G_\delta$ subset $\SR_X\subset \SU_X$ such that $\Phi|_{\SR_X}$ is continuous. 

Let $Z\in \SR_X$. If $h_{top}(\phi^Z)=0$, then $Z\in \SA_0$. So, let us consider $Z\in \SA\cap \SR_X$.  Fix $\eps>0$ and by the variational principle, let $\mu$ be an ergodic measure of $Z$ such that $h_{\mu}(\phi^Z)>h_{top}(\phi^Z)-\eps$. In particular, $\mu(\Sing(X))=0$. Observe that by the ergodicity of $\mu$, their exponents are almost everywhere constant. So, by the Ruelle's inequality 
$$0<h_\mu(\phi^Z) \leq \int_M \sum_{\lambda_i(x) > 0} \lambda_i(x) \cdot \dim E_i(x) \, d\mu(x).
$$
Consequently, there is a positive exponent $\lambda^+$ associated to a sub-bundle $E^+$. On the other hand, an analogous reasoning for the reverse flow generated by $-X$, we obtain the existence of a negative exponent $\lambda^-$ associate with a sub-bundle $E^-$. Since $\dim(TM)=3$ and $\lambda( x, X(x))=0$, for every regular point, we obtain that $\mu$ is hyperbolic.
As a by-product of the above argument, we have that
\begin{equation}
\label{eq:Ru:exp} h_\mu(\phi^Z) \leq \min\{|\lambda^+|,|\lambda^-|\}.
\end{equation}

We then split the proof into two cases:

\textbf{Case 1:} Suppose, the Oseledec splitting of $\mu$ is dominated then there exists a horseshoe  $\Gamma_{\eps}$ (see for instance \cite{LW}*{Proposition~4.1}) and $\delta>0$ such that 
$$h_{top}(\phi^Z|_{\Gamma_{\eps}})>h_{\mu}(\phi^Z)-\delta>h_{top}(\phi^Z)-\eps.$$

\textbf{Case 2:} If the Oseledec splitting of $\mu$ is not domminated, we procced as follows. By Ma\~n\'e's ergodic closing lemma, we can suppose that for any $Z\in \SR_X$ and any ergodic measure $\mu$ not supported on singularities, there exists a sequence of periodic orbits $O_n$ 
 such that the invariant measures $\nu_n$ defined by $O_n$ converge to $\mu$ in the weak* topology and also their Lyapunov exponents converges to the exponents of $\mu$ ( it is enough to approximate sufficiently long initial segment of generic point of $\mu$; see for instance proof of \cite{AS}*{Theorem 3.5})

By \eqref{eq:Ru:exp}, for every $\delta>0$ there is $n$ such that
the Lyapunov exponents $\lambda^{\pm}_n$ of the orbit $O_n$  satisfy:
$|\lambda^{\pm}_n|>h_{\mu}(\phi^Z)-\delta.$
  
 If the union of the hyperbolic
periodic orbits $\{O_n\}_n$ admit a dominated splitting, then we have reduction to Case~1. Therefore, assume the contrary. Since the periods of the periodic orbits $O_n$ are unbounded, 
\cite{LW}*{Theorem 3.6} yields a sequence of vector fields $Y_n\in \SX^1(M)$, $Y_n\to Z$ and horseshoes $\Gamma_n$ for $\phi^{Y_n}$ such that $\Gamma_n$ belongs to small neighborhoods of $O_n$ and
$$h_{top}(\phi^{Y_n}|_{\Gamma_n})>h_{\mu}(\phi^Z)- \frac{\delta}{2}.$$
 If we select each $\Gamma_n$ sufficiently close to the hyperbolic set $O_n$, they will be themselves hyperbolic. By Theorem~\ref{thm: reduction2}
we can obtain ergodic measure $\mu_n$ of $\phi^{Y_n}$
such that $h_{\mu_n}(\phi^{Y_n})>h_{\mu}(\phi^Z)-\delta$ 
and $\supp(\mu_n)=\Lambda_n\subset \Gamma_n$
where $\Lambda$ is a horseshoe (obtained as a suspension of shift of finite type from a sequence in Theorem~\ref{thm: reduction2}).
Recall that $Z$ is a continuity point of  $\Phi$ (and $\delta>0$ is arbitrary), thus we obtain (going to a subsequence when necessary) a hyperbolic horseshoe $\Gamma_{\eps}$ arbitrarily close in the Hausdorff distance to $\lim_{n\to \infty} \Lambda_n$  
and an ergodic measure $\nu$ arbitrarily close to $\lim_{n\to \infty} \mu_n\in \SM(M)$ with $\supp(\nu)=\Gamma_\eps$ and such that 
$$h_{top}(\phi^Z|_{\Gamma_{\eps}})\geq h_\nu(\phi^Z|_{\Gamma_{\eps}})>h_{\mu}(\phi^Z)-\delta>h_{top}(\phi^Z)-\eps.$$

But $\Gamma_\eps$ is a suspension flow over full shift, hence Theorem \ref{thm: SFT} ensures flexibility for $Z$, hence for any $Z\in\SR_X$. Since $\SR_X$ is a residual subset of $\SU_X$, then it can be written as $$\SR_X=\bigcap_{n\in \N} \SU_{X,n},$$
where $\SU_{X,n}$ is an open and dense subset of $\SU_X$. Fix a countable, dense in $\SA$ sequence $X_n\in \SA$. Let $\mathcal{U}_1=\mathcal{U}_{X_1}$. Now assume that pairwise disjoint sets $\mathcal{U}_1, \ldots, \mathcal{U}_n$ have been constructed and let $m$ be the smallest index, such that $X_m\not\in \overline{\mathcal{U}_1}\cup \ldots \cup \overline{\mathcal{U}_n}$ and then let $X_m\in \mathcal{U}_{n+1}\subset \mathcal{U}_{X_m}$
be any open set disjoint with each $\mathcal{U}_j$ for $j<n$. After completing the induction, we obtain a sequence of pairwise disjoint sets $\mathcal{U}_n$
such that $\SA \subset \overline{\bigcup_{n=1}^\infty \mathcal{U}_n}$.
Let $\mathcal{U}_0=\SA_0\setminus \overline{\bigcup_{n=1}^\infty \mathcal{U}_n}$. There are indexes $i(n)$ that $\mathcal{U}_n=\mathcal{U}_{X_{i(n)}}$.
Observe that (by disjointness of sets $\mathcal{U}_n$, we have
$$\SR=\bigcap_{k\in \N}\bigcup_{n=1}^\infty\SU_{X_{i(n)},k}=
\bigcup_{n=1}^\infty\bigcap_{k\in \N}\SU_{X_{i(n)},k}=\bigcup_{n=1}^\infty \SR_{X_{i(n)}}.$$ This proves that $\SR$ is residual in $\overline{\bigcup_{n=1}^\infty \mathcal{U}_n}$ and as a consequence $\SR\cup \mathcal{U}_0\subset \SR\cup \SA_0$ is residual in $\SX^1(M)$. This completes the proof of \eqref{thm:generic:1}.

Next, suppose that $M$ is a closed surface and denote by $\Diff^1(M)$ the set of $C^1$-diffeomorphisms on $M$. To conclude the proof, we need to show that there is a residual set $\SR\subset \Diff^1(M)$ such that every $f\in \SR$ has entropy flexibility.  To construct the set $\SR$, we can  repeat verbatim the above proof for flows, by doing the obvious adaptations to the diffeomorphism case. Indeed, all the previous construction holds in the discrete-time case, and we just need to use the diffeomorphism versions of \cite{LW}*{Theorem 3.6 and Proposition 4.1}, which are respectively, \cite{WL}*{Theorem 4.6 and Theorem 4.7}.

\end{proof}

\section{Applications to Partially Hyperbolic Diffeomorphisms}\label{sec: PH}

This section is devoted to discussing the entropy flexibility of discrete-time dynamical systems. In Theorem \ref{thm: generic}, we have seen that entropy flexibility holds $ C^1$-generally for surface diffeomorphisms. Here we will explore it for partially hyperbolic diffeomorphisms. Throughout this section, let $M$ be a compact Riemannian manifold with Riemannian metric $\Vert\cdot\Vert$, and let $f: M \to M$ be a $C^1$ diffeomorphism. We begin by recalling the fundamental notion of dominated splitting. We say that  $f$ admits a \emph{dominated splitting} if there exists a continuous $Df$-invariant splitting 
$$
T_\Lambda M = E^1 \oplus E^2 \oplus \dots \oplus E^k,
$$
and constants $K_i > 0$ and $\lambda \in (0,1)$ such that, for every $x \in M$, all $n \geq 0$, and every $1 \leq i   \leq k-1$, one has 
$$
\frac{\|Df^n|_{E^i_x}\|}{m(Df^n|_{E^{i+1}_x})} \leq K_i \lambda^n,
$$

\begin{definition}
A diffeomorphism $f$ is said to be \emph{partially hyperbolic} if there exists  a splitting is dominated. $Df$-invariant splitting
$
T_\Lambda M = E^s \oplus E^c \oplus E^u,
$
such that:
\begin{itemize}
    \item The bundle $E^s$ is uniformly contracting: there exist constants $C > 0$ and $\lambda \in (0,1)$ such that
    $
    \|Df^n|_{E^s_x}\| \leq C \lambda^n \quad \text{for all } x \in \Lambda, \, n \geq 0;
    $
    \item The bundle $E^u$ is uniformly expanding: there exist constants $C > 0$ and $\lambda \in (0,1)$ such that
    $
    \|Df^{-n}|_{E^u_x}\| \leq C \lambda^n \quad \text{for all } x \in \Lambda, \, n \geq 0;
    $
\end{itemize}
\end{definition}

Let $f:M \to M$ be a partially hyperbolic diffeomorphism with splitting $E^s\oplus E^c\oplus E^u$.  Remember from the invariant manifold theory \cite{HPS} that there exists an invariant stable foliation $\SF^s$ and an invariant unstable foliation $\SF^u$ which are tangent to the bundles $E^s$ and $E^u$, respectively. On the other hand, the existence of an invariant foliation $\SF^c$ tangent to the center bundle $E^c$ cannot always be granted.  Here we will suppose that there exists a invariant center foliation $\SF^c$ tangent to $E^c$. 

We say that the center foliation is {\it compact} if all of its leaves are compact. Moreover, $\SF^c$ is {\it uniformly compact} if every leaf of $\SF^c$ has finite holonomy. In this case,  the quotient space $\tilde{M}=M/\SF^c$ is a compact metric space, using the Hausdorff distance between center leaves. Let us call $\pi: M\to \tilde{M}$ the quotient map and $F$ the homeomorphism induced in $\tilde{M}$ (see for instance \cite{Eps}). We are now in position to state our first application.

\begin{theorem}
\label{c.partial}
Let $f: M\to M$ be a partially hyperbolic diffeomorphism with a one-dimensional center bundle. Suppose $\SF^c$ is uniformly a compact center foliation with trivial holonomy. Then $f$ has flexibility of the entropy. 
\end{theorem}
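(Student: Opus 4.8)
The strategy is to transfer the problem to the quotient dynamics $F\colon \tilde M\to \tilde M$ and then lift back. First I would record the standard fact that, since $\SF^c$ is uniformly compact with trivial holonomy and $E^c$ is one-dimensional, the quotient space $\tilde M = M/\SF^c$ is a compact metric space and $F$ is a homeomorphism which is \emph{topologically Anosov}: the images $\pi(\SF^s)$ and $\pi(\SF^u)$ give a pair of transverse (un)stable-like foliations for $F$, so $F$ is expansive and has the shadowing property. Indeed, the projection collapses the one-dimensional center, which is the only direction lacking uniform hyperbolicity, so $F$ inherits a genuine hyperbolic structure. Consequently $F$ is an expansive homeomorphism with shadowing on a compact metric space; by the Bowen--Fried type coding (or directly the fact that expansive + shadowing implies $F$ is a finite-to-one factor of a subshift of finite type), $F$ has entropy flexibility by Corollary~\ref{cor: SFT} together with the finite-extension argument used in Corollary~\ref{cor: expshad} (finite extensions preserve entropy by Bowen~\cite{Bo}).

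Next I would relate the entropies of $f$ and $F$. Because $\pi$ is a semiconjugacy with one-dimensional compact fibers, and the center leaves are uniformly compact with trivial holonomy, the fiber dynamics carries no entropy; by the Ledrappier--Walters / Bowen inequality for entropy of fibered systems one gets $h_{top}(f|_{\Lambda}) = h_{top}(F|_{\pi(\Lambda)})$ for every compact invariant $\Lambda$, and likewise $h_\mu(f) = h_{\pi_*\mu}(F)$ for every invariant measure $\mu$ (the center direction contributes a zero Lyapunov exponent, so the Abramov-type correction is trivial). In particular $h_{top}(f) = h_{top}(F)$.

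Now fix $0\le c < h_{top}(f) = h_{top}(F)$. Applying entropy flexibility of $F$, obtain a compact invariant set $\tilde\Lambda_c \subset \tilde M$ with $h_{top}(F|_{\tilde\Lambda_c}) = c$ and a (unique) ergodic measure $\tilde\mu_c$ on $\tilde\Lambda_c$ realizing $h_{\tilde\mu_c}(F) = c$, with $F|_{\tilde\Lambda_c}$ strictly ergodic. Set $\Lambda_c = \pi^{-1}(\tilde\Lambda_c)$, a compact invariant subset of $M$; then $h_{top}(f|_{\Lambda_c}) = c$ by the previous paragraph. For the measure, lift $\tilde\mu_c$ to an $f$-invariant ergodic measure $\mu_c$ on $\Lambda_c$: since the fibers are compact and the holonomy trivial, there is a canonical invariant lift (e.g. via a measurable section, or by an ergodic decomposition argument) with $\pi_*\mu_c = \tilde\mu_c$ and $h_{\mu_c}(f) = h_{\tilde\mu_c}(F) = c$. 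This yields $h_{top}(f|_{\Lambda_c}) = h_{\mu_c}(f) = c$, which is exactly entropy flexibility.

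\textbf{Main obstacle.} The delicate point is establishing that $F$ is genuinely an expansive homeomorphism with the shadowing property on $\tilde M$ — equivalently, that collapsing the center does not create pathologies (the quotient could a priori be badly behaved as a topological space, or $F$ could fail expansiveness if distinct center leaves are not uniformly separated transversally). This is where the hypotheses "uniformly compact" and "trivial holonomy" are essential: they guarantee $\tilde M$ is a reasonable compact metric space and that $\pi$ is a genuine fiber bundle-like map, so the transverse hyperbolic structure descends cleanly. A secondary technical point is the existence of an invariant measurable lift $\mu_c$ of $\tilde\mu_c$ with matching entropy; this uses the compactness of fibers and absence of holonomy to avoid any entropy gain, and can be handled by a standard relative ergodic theory argument.
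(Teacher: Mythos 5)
Your proposal is correct and follows the paper's strategy for the core of the argument: pass to the quotient dynamics $F$ on $\tilde M=M/\SF^c$, use the Bohnet--Bonatti result (this is exactly \cite{BB}, which the paper cites for expansiveness and shadowing of $F$ — you are right that this is the load-bearing step and not a formality), code $F$ by a shift of finite type to get entropy flexibility of $F$ from Corollary~\ref{cor: SFT}, and use the zero entropy of the compact one-dimensional fibers together with Bowen's inequality \cite{Bo} to transfer topological entropy back to $f$ along $\Gamma_c=\pi^{-1}(\tilde\Gamma_c)$. One small gloss: ``expansive $+$ shadowing $\Rightarrow$ finite-to-one factor of an SFT'' is applied in the paper after first passing, via spectral decomposition, to a chain recurrent class $H$ with $h_{top}(F|_H)=h_{top}(F)$, since the Bowen coding requires that restriction; your phrasing skips this but the fix is exactly that reduction and costs nothing.

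Where you genuinely diverge is the production of the ergodic measure $\mu_c$. The paper invokes the existence of measures of maximal entropy for partially hyperbolic systems with one-dimensional center \cite{DFPV} and takes $\mu_c$ to be an MME of $\Gamma_c$, so that $h_{\mu_c}(f)=h_{top}(f|_{\Gamma_c})=c$ directly. You instead lift the unique measure $\tilde\mu_c$ of the strictly ergodic quotient subsystem. This works and is arguably more elementary: any invariant measure on $\Lambda_c=\pi^{-1}(\tilde\Lambda_c)$ (which exists by Krylov--Bogolyubov) must project to $\tilde\mu_c$ by unique ergodicity downstairs, and for an ergodic component $\mu_c$ one squeezes $c=h_{\tilde\mu_c}(F)\leq h_{\mu_c}(f)\leq h_{top}(f|_{\Lambda_c})=c$, the lower bound because entropy does not increase under factor maps and the upper bound by the variational principle on $\Lambda_c$. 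In particular you do not need the full Ledrappier--Walters relative variational principle, nor a measurable section, nor \cite{DFPV}; the trade-off is that the paper's route gives the measure with one citation, while yours is self-contained given the zero-fiber-entropy computation already needed for the topological part.
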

\begin{proof}
   Let $f$ be a partially hyperbolic diffeomorphism under the assumptions. By \cite{BB}*{Proposition 4.20 and Theorem 2}, the quotient map $F$ is expansive and has the shadowing property.
  Since $\dim(E^c)=1$, then the center foliation being uniformly compact is equivalent to the fact that leaves have uniformly bounded length. In particular, the topological entropy of the fibers of $\pi$ is zero. Hence,
$$0<h_{top}(f)=h_{top}(F).$$

 By the Spectral Decomposition Theorem (and variational principle), there is a chain recurrent class $H$ for $F$
such that $F|_H$ is expansive, transitive, has shadowing and $h_{top}(F)=h_{top}(F|_H)$,
e.g. see \cite{Aoki}*{Remark~4.2.9}. By a well-known result of Bowen (e.g. see \cite{Aoki}*{Theorem~4.3.6}) there is a shift of finite type $\Sigma_A$ and a finite-to-one semi-conjugacy $\eta \colon (\Sigma_A,\sigma)\to (H,F|_H)$.
Since $\eta$ preserves topological and metric entropy and ergodic measures of $\sigma$ lift to $F$, we obtain that $F$ has entropy flexibility by Corollary~\ref{cor: SFT}.

Since fibers of $\pi$ carry zero entropy, by \cite{Bo}, for any $$0\leq c\leq h_{top}(F)=h_{top}(f),$$ there exists a compact and invariant subset $\tilde{\Gamma_c}\subset\tilde{M}$ such that if we denote $\Gamma_c=\pi^{-1}(\tilde{\Gamma_c})$ then
$$h_{top}(f,\Gamma_c)=h_{top}(F,\tilde{\Gamma_c})=c.$$

To prove the existence of an ergodic measure $\mu_c$ satisfying $h_{\mu_c}(f)=c $, we recall that every partially hyperbolic set with $\dim(E^c)=1$ admits a measure of maximal entropy by \cite{DFPV}. So, we just need to take $\mu_c$ as a measure of maximal entropy for $\Gamma_c$.

\end{proof}
\begin{corollary}
    Let $f$ be a partially hyperbolic diffeomorphism with one-dimensional central direction. Assume, $dim(E^u)=1$ and $E^u$ is oriented  Then $f$ has entropy flexibility.
\end{corollary}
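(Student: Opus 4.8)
The plan is to obtain the corollary as a direct application of Theorem~\ref{c.partial}: it suffices to verify that, under the extra hypotheses $\dim(E^u)=1$ and $E^u$ oriented, the invariant center foliation $\mathcal{F}^c$ (postulated throughout this section) is uniformly compact and has trivial holonomy. I would begin with a harmless reduction: replacing $f$ by $f^2$ if necessary, we may assume $Df$ preserves the orientation of $E^u$. This loses nothing for entropy flexibility, since an $f^2$-invariant compact set $\Lambda$ with $h_{top}(f^2|_{\Lambda})=2c<h_{top}(f^2)$ produces the $f$-invariant compact set $\Lambda\cup f(\Lambda)$ with $h_{top}(f|_{\Lambda\cup f(\Lambda)})=c$, and an $f^2$-ergodic measure of entropy $2c$ yields, by averaging with its $f$-push-forward, an $f$-ergodic measure of entropy $c$.

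Next I would dispose of the holonomy. Granting for the moment that $\mathcal{F}^c$ is compact, every center leaf $L$ is a circle. Since $\dim(E^u)=1$ and $E^u$ is oriented, restricting $\mathcal{F}^c$ to a leaf $W$ of the center-unstable foliation exhibits it as a one-dimensional foliation of the surface $W$ whose normal line field $E^u|_W$ is oriented; the holonomy germ of $L$ inside $W$ is then an orientation-preserving, finite-order germ of $(\mathbb{R},0)$ fixing the origin, and such a germ is the identity. The complementary transverse directions lie along $E^s$, where the strong-stable holonomy is trivial and the contraction forces the finite center-holonomy group to act trivially there as well. Hence $\mathcal{F}^c$ has trivial holonomy and, being compact, is uniformly compact.

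The step I expect to be the \textbf{main obstacle} is precisely the (uniform) compactness of $\mathcal{F}^c$: one must exclude Sullivan/Epstein-type pathologies in which a one-dimensional invariant foliation has compact leaves of unbounded length. This is where the codimension-one hypothesis $\dim(E^u)=1$ is essential. I would invoke the codimension-one theory of partially hyperbolic diffeomorphisms carrying a compact center foliation---in the circle of ideas behind the proof of Theorem~\ref{c.partial}, cf.\ \cite{BB}, together with Epstein-type volume bounds \cite{Eps}---to conclude that for such an $f$ a compact center foliation automatically has uniformly bounded leaf volume, while the orientation of $E^u$ supplies the co-orientation used in the previous paragraph. (If compactness of $\mathcal{F}^c$ is not already built into the hypotheses, it is the property one must read into the standing assumption that an invariant center foliation exists, and the rest of the argument is unaffected.) Once $\mathcal{F}^c$ is uniformly compact with trivial holonomy, Theorem~\ref{c.partial} applies and delivers entropy flexibility for $f$, finishing the proof.
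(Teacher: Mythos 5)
Your proposal follows the paper's route exactly: the corollary is deduced from Theorem~\ref{c.partial} by checking that the (standing, uniformly compact) invariant center foliation has trivial holonomy once $\dim(E^u)=1$ and $E^u$ is oriented. The paper disposes of this in one line by citing Bohnet \cite{Boh}, which states precisely that a partially hyperbolic diffeomorphism with uniformly compact center foliation, one-dimensional oriented $E^u$, has center leaves with trivial holonomy; your hand-sketch of that fact (finite-order orientation-preserving germ of $(\mathbb{R},0)$ is the identity) and your worry about uniform compactness are exactly what the citation and the section's standing hypotheses absorb, so there is no genuine gap.
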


\begin{proof}
 To see why the corollary holds, just notice that  \cite{Boh}, If $f$ is a partially hyperbolic diffeomorphism with uniformly compact center foliation, $\dim(E^u)=1$  and $E^u$ is oriented then the center leaves have trivial holonomy.
\end{proof}

Next, we keep pursuing the entropy flexibility of diffeomorphisms and manage to deal with some partially hyperbolic systems. 
 It is clear that if we can obtain a sequence of compact and invariant sets $K_n$ that are 
 bounded-to-one factors of
shifts of finite type and such that $h_{top}(f|_{K_n})$ converge to $h_{top}(f)$. In particular, this condition is satisfied when the sets $K_n$ are hyperbolic horseshoes. This approximation property was obtained in several different contexts in the works  \cite{Zhang}, \cite{YZ}, \cite{DHSR}, and \cite{LOR}. Our last result collects all these results together and states that entropy flexibility holds in those contexts.  

\begin{corollary}
\label{c.zhang}
Let $f$ be a diffeopmorhism. If any of the following conditions are satisfied:
\begin{enumerate}
    \item $f$ is partially hyperbolic with one-dimensional center direction such that, any $u$-measure has positive center Lyapunov exponent and the strong stable foliation is minimal.
    \item $f$ robustly non-hyperbolic non-transitive diffeomorphism.
 
\end{enumerate}
Then $f$ has entropy flexibility.
\end{corollary}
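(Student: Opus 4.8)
The plan is to treat both cases uniformly by reducing to Corollary~\ref{cor: SFT}. If $h_{top}(f)=0$ there is nothing to prove, so I assume $h_{top}(f)>0$ and fix an arbitrary $c\in[0,h_{top}(f))$; the goal is to produce a strictly ergodic compact invariant set $\Lambda_c\subset M$ together with its unique invariant measure $\mu_c$ so that $h_{top}(f|_{\Lambda_c})=h_{\mu_c}(f)=c$.

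First I would quote the horseshoe-approximation results mentioned just before the statement. Under hypothesis (1) — partial hyperbolicity with one-dimensional center, positive center Lyapunov exponent for every $u$-measure, and minimality of the strong stable foliation — one of \cite{Zhang,YZ} yields a sequence of transitive locally maximal hyperbolic sets (horseshoes) $K_n\subset M$ with $h_{top}(f|_{K_n})\to h_{top}(f)$. Under hypothesis (2) — robust non-hyperbolicity together with non-transitivity — the same type of approximation by hyperbolic horseshoes is provided by one of \cite{DHSR,LOR}. In either case Bowen's Markov coding gives, for each $n$, a shift of finite type $(\Sigma_{A_n},\sigma)$ and a bounded-to-one semi-conjugacy $\pi_n\colon \Sigma_{A_n}\to K_n$; by \cite{Bo} the map $\pi_n$ preserves topological entropy, so $h_{top}(\sigma|_{\Sigma_{A_n}})=h_{top}(f|_{K_n})$.

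To conclude, I would fix $n$ large enough that $h_{top}(f|_{K_n})>c$, hence $h_{top}(\sigma|_{\Sigma_{A_n}})>c$. Corollary~\ref{cor: SFT}, whose proof produces a strictly ergodic witness (cf.\ the Remark following Theorem~\ref{thm: SFT}, and Theorem~\ref{thm: reduction} with $\rho\equiv 1$), then yields a strictly ergodic subshift $\Gamma_c\subset\Sigma_{A_n}$ with unique invariant measure $\nu_c$ satisfying $h_{top}(\sigma|_{\Gamma_c})=h_{\nu_c}(\sigma)=c$. Setting $\Lambda_c:=\pi_n(\Gamma_c)\subset K_n\subset M$ and $\mu_c:=(\pi_n)_*\nu_c$, the image of a minimal uniquely ergodic system under a factor map is again strictly ergodic, so $(\Lambda_c,f)$ is strictly ergodic with unique measure $\mu_c$; and since $\pi_n|_{\Gamma_c}$ is still bounded-to-one, \cite{Bo} gives $h_{top}(f|_{\Lambda_c})=h_{top}(\sigma|_{\Gamma_c})=c$, while finiteness of the fibers also forces $h_{\mu_c}(f)=h_{\nu_c}(\sigma)=c$. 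As $c$ was arbitrary, $f$ has entropy flexibility, and the two bulleted cases follow from the corresponding choices of reference.

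The statement carries essentially no new dynamical difficulty: it is the packaging of existing horseshoe-approximation theorems with Corollary~\ref{cor: SFT}. The one point I would check carefully is that the codings delivered by \cite{Zhang,YZ,DHSR,LOR} are uniformly bounded-to-one, so that Bowen's entropy-preservation result \cite{Bo} applies not merely to $\pi_n$ but also to its restriction to the subsystem $\Gamma_c$ — with that in hand, everything else is a routine transport of the flexibility of shifts of finite type along these codings.
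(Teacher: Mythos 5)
Your proposal is correct and follows exactly the route the paper intends: the paper gives no separate proof of this corollary but relies on the preceding paragraph, which reduces it to the approximation by hyperbolic horseshoes (bounded-to-one factors of shifts of finite type) with entropy converging to $h_{top}(f)$ from \cite{Zhang}, \cite{YZ}, \cite{DHSR}, \cite{LOR}, combined with Corollary~\ref{cor: SFT} and Bowen's entropy preservation under finite-to-one factors. Your write-up simply makes explicit the transport of the strictly ergodic witness along the coding, which is the argument the authors have in mind.
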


\begin{remark}
Another situation where we can obtain flexibility of the entropy is the partially hyperbolic horseshoes constructed in Diaz et al \cite{DHSR}.  Indeed, there exists a partially hyperbolic structure with a one-dimensional center bundle. Those partially hyperbolic horseshoes were proven to factor over horseshoes. In addition,  Leplaideur et al \cite{LOR} prove that any invariant ergodic measure is hyperbolic. Thus, our arguments apply. We remark that those systems are at the boundary of uniformly hyperbolic diffeomorphisms.
\end{remark}

\section*{Acknowledgments.}
Alexander Arbieto was partially supported by CAPES, CNPq, PRONEX-Dynamical Systems and FAPERJ E-26/201.181/2022 “Programa Cientista do Nosso Estado from Brazil”.
Piotr Oprocha is grateful to Max Planck Institute for Mathematics in Bonn for its hospitality and financial support. 
\begin{table}[h]
\begin{tabularx}{\linewidth}{p{1.5cm}  X}
\includegraphics [width=1.8cm]{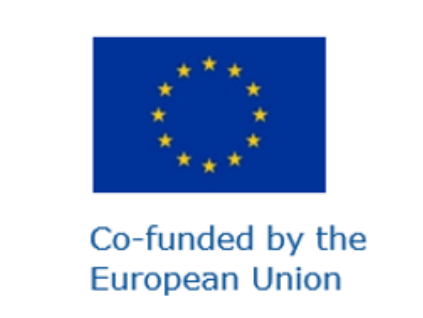} &
\vspace{-1.5cm}
This research is part of a project that has received funding from
the European Union's European Research Council Marie Sklodowska-Curie Project No. 101151716 -- TMSHADS -- HORIZON--MSCA--2023--PF--01.\\
\end{tabularx}
\end{table}

\end{document}